\newtheorem{thm}{Theorem}[section]
\newtheorem{prop}[thm]{Proposition}
\newtheorem{coro}[thm]{Corollary}
\newtheorem{lem}[thm]{Lemma}
       \theoremstyle{definition}
       \theoremstyle{remark}
\newcommand{\PP}{\mathbb{P}}
\newcommand{\DD}{\mathbb{D}}
\newcommand{\E}{\mathbb{E}}
\newcommand{\R}{\mathbb{R}}
\newcommand{\C}{\mathbb{C}}
\renewcommand{\H}{\mathbb{H}}
\newcommand{\N}{\mathbb{N}}
\newcommand{\Z}{\mathbb{Z}}
\newcommand{\U}{\mathbb{U}}
\newcommand{\Sbb}{\mathbb{S}}
\newcommand{\Gbb}{\mathbb{G}}
\newcommand{\Vbb}{\mathbb{V}}
\newcommand{\Fbb}{\mathbb{F}}
\newcommand{\re}{\mathrm{Re}}
\newcommand{\im}{\mathrm{Im}}
\newcommand{\sym}{\mathrm{sym}}
\newcommand{\Tr}{\mathrm{Tr}}
\newcommand{\tr}{\mathrm{tr}}
\newcommand{\supp}{\mathrm{supp}}
\def\br{\begin{color}{red}}
\def\bb{\begin{color}{blue}}
\def\bg{\begin{color}{green}}
\def\er{\end{color}}
\def\eg{\end{color}}
\def\eb{\end{color}}
\def\a{\alpha}
\def\k{\kappa}
\def\th{\theta}
\def\t{\tau}
\def\b{\beta}
\def\d{\delta}
\def\ve{\varepsilon}
\def\o{\omega}
\def\O{\Omega}
\def\g{\gamma}
\def\G{\Gamma}
\def\l{\lambda}
\def\L{\Lambda}
\def\s{\sigma}
\def\q{\quad}
\def\pd{\partial}
\def\Log{\operatorname{Log}}
\def\spann{\operatorname{span}}
\def\area{\operatorname{area}}
\def\dist{\operatorname{dist}}
\def\supp{\operatorname{supp}}
\def\sgn{\operatorname{sgn}}
\def\sm{\setminus}
\def\sse{\subseteq}
\def\mfu{\mathfrak{u}}
\def\mfl{\mathfrak{l}}
\def\mfg{\mathfrak{g}}
\def\mfm{\mathfrak{m}}
\def\mfn{\mathfrak{n}}
\def\mfw{\mathfrak{l}}
\def\mfs{\mathfrak{s}}
\def\cA{\mathcal{A}}
\def\cB{\mathcal{B}}
\def\cE{\mathcal{E}}
\def\cV{\mathcal{V}}
\def\cD{\mathcal{D}}
\def\cI{\mathcal{I}}
\def\cL{\mathcal{L}}
\def\cO{\mathcal{O}}
\def\cM{\mathcal{M}}
\def\cS{\mathcal{S}}
\def\cT{\mathcal{T}}
\def\cG{\mathcal{G}}
\def\cC{\mathcal{C}}
\def\cF{\mathcal{F}}
\def\<{\langle}
\def\>{\rangle}
\title{Yang--Mills measure and the master field on the sphere}
\author{Antoine Dahlqvist \& James Norris\thanks{Research supported by EPSRC grant EP/I03372X/1}}
\affil{Statistical Laboratory, Centre for Mathematical Sciences, Wilberforce Road, Cambridge, CB3 0WB, UK} 
\begin{document}
\bibliographystyle{plain}
\maketitle
\abstract{We study the Yang--Mills measure on the sphere with unitary structure group.
In the limit where the structure group has high dimension, we show that the traces of loop holonomies converge in probability to a deterministic limit, 
which is known as the master field on the sphere. 
The values of the master field on simple loops are expressed in terms of the solution of a variational problem.
We show that, given its values on simple loops, the master field is characterized on all loops of finite length by a system of differential equations, known as the Makeenko--Migdal equations.
We obtain a number of further properties of the master field.
On specializing to families of simple loops, our results identify the high-dimensional limit, in non-commutative distribution, of the Brownian loop in the group of unitary matrices.}

\tableofcontents

\section{Introduction}
The Yang--Mills measure, associated to a (two-dimensional) surface $\Sigma$ and to a compact Lie group $G$, is a probability measure on (generalized) connections of principal $G$-bundles over $\Sigma$.
It was introduced in a series of works by Gross, King \& Sengupta \cite{MR1015789}, Fine \cite{MR1124272}, Driver \cite{MR1006295},  Witten \cite{MR1133264,MR1185834}, Sengupta \cite{MR1346931} and L\'evy \cite{MR2006374}, 
as a mathematical version of Euclidean Yang--Mills field theory.
In this paper, we will consider the Yang--Mills measure in the case where the surface $\Sigma$ is fixed and the group $G$ is a classical matrix group of high dimension.
The interest of such a set-up from the viewpoint of random matrix theory was first raised in the mathematics literature by Singer \cite{MR1373007}, 
who made several conjectures, based on earlier work in physics \cite{MR1401299, MR1352420, MR605753, MR596907}.
The high-dimensional limit of the Yang--Mills measure when $\Sigma$ is the whole plane has since been studied by Xu \cite{MR1489573}, Sengupta \cite{MR2757706}, L\'evy \cite{MR2667871}, Anshelevich \& Sengupta \cite{MR2864481}, 
Dahlqvist \cite{MR3554890} and others.
As we shall see, the general problem is closely related to another, addressed by Biane \cite{MR1426833}, L\'evy \cite{MR2407946}, L\'evy \& Ma\"ida \cite{MR2727643} and Collins, Dahlqvist \& Kemp \cite{1502.06186},
which is to understand the high-dimensional limit of the Brownian loop measure on the group as a non-commutative process.

We focus here on the case where the surface $\Sigma$ is a sphere.  
This has received particular attention in the physics literature \cite{MR1262293, MR1297298, MR1321333, Rusakov}, as it displays a phase transition of third order named after Douglas and Kazakov \cite{DOUGLAS1993219}.
A corresponding mathematical analysis of the partition function was achieved by Boutet de Monvel \& Shcherbina \cite{MR1692402} and L\'evy \& Ma\"ida \cite{MR3440793}.
The main result of the present work, Theorem \ref{main}, confirms a conjecture of Singer \cite{MR1373007}, 
showing that, under the Yang--Mills measure on the sphere for the unitary group $U(N)$, the traces of loop holonomies converge as $N\to\infty$ to a deterministic limit.
We characterize this limit analytically and derive some further properties.
Following the physics literature, the limit is called the master field on the sphere.  
As a by-product of our main result, we show that the Brownian loop in $U(N)$ converges in non-commutative distribution as $N\to\infty$ to a certain non-commutative process, which we call the free unitary loop.  

There is a system of relations, discovered by Makeenko and Migdal \cite{MakeenkoMigdal}, indexed by families of embedded loops,
between the expectations under the Yang--Mills measure of polynomials in the traces of loop holonomies.
These have now been proved for the whole plane by L\'evy \cite{MR2667871} and Dahlqvist \cite{MR3554890} and for any compact surface by Driver, Gabriel, Hall \& Kemp \cite{MR3631396}.
The Makeenko--Migdal equations provide a potential line of argument to prove convergence of the Yang--Mills measure as $N\to\infty$, 
which is to show a suitable concentration estimate for the holonomy traces, and to pass to the limit in the equations, showing that the limit equations determine a unique limit object.
In the whole plane case, moment estimates for unitary Brownian motion provide the needed concentration, and 
the Makeenko--Migdal equations may be augmented by a further equation, such that the whole system of equations then characterizes the limit field.
So the programme has been completed in that case \cite{MR3554890,MR2667871}.
However, as noted in \cite{MR3631396}, the concentration and characterization problems have remained open in general.
   
In this paper, we will establish two key points.
First, for simple loops, we show in Proposition \ref{Representation simple loops beta ensemble} that expectations and covariances of the holonomy traces can be represented by functional of a discrete $\beta$-ensemble.
This representation allows to identify the limit in probability of these traces as $N\to\infty$, following the work of 
Guionnet and Ma{\"{\i}}da \cite{MR2198201}, Johansson \cite{MR1737991} and F\'eral \cite{MR2483725} on discrete $\beta$-ensembles. 
This amounts to a rigorous version of ideas explained by Boulatov \cite{MR1262293} and Douglas \& Kazakov \cite{DOUGLAS1993219}.  
The second point, shown in Section \ref{MM} using the Makeenko--Migdal equations, is  
that the convergence of marginals to a deterministic limit for simple loops forces the same to hold for a more generic class of loops\footnote{This point has recently been shown independently also by Brian Hall \cite{1705.07808}.}.

An alternative line of argument for the first point, which we shall discuss elsewhere, 
would be to use the fact that the process of eigenvalues of the marginals of the Brownian loop is known to have the same law as a Dyson Brownian motion on the circle, starting from $1$ and conditioned to return to $1$.
Indeed, several scaling limits of this conditioned process have recently been understood by Liechty \& Wang \cite{MR3474469}.  
This link was first observed in the physics literature in Forrester, Majumdar \& Schehr \cite{MR2747559,MR2874214}.  
Section \ref{HADBE} gives another way to obtain macroscopic results on the empirical distribution of this process.       

The paper is organized as follows. 
Section \ref{Model Results} introduces the model and our results.  
Section \ref{HADBE} shows convergence and concentration of holonomy traces for simple loops, using a duality relation with a discrete $\beta$-ensemble.
Section \ref{MM} explains how  the  Makeenko--Migdal equations can be used to extend this convergence to a general class of regular loops.
Then, in Section \ref{RL}, we make a final extension to all loops of finite length.
Section \ref{Prop MF} presents some further properties of the master field, including a relation with the free Hermitian Brownian loop in the subcritical regime, and
a formula for the evaluation of the master field on a large class of loops. 

Subject to certain modifications, to be explained in a future work, the argument explained here applies to other series of compact groups and also with the projective plane in place of the sphere.

\section{Setting and statement of the main results \label{Model Results}}
We review the notion of a Yang--Mills holonomy field over a compact Riemann surface.
Then we discuss its relation, in the case of the sphere, to the Brownian loop in a Lie group.
Next, we state our main results on convergence of Yang--Mills holonomy in $U(N)$ over the sphere to the master field,
and on analytic characterization of the master field.
The proof of these main results has three steps, which are outlined in Section \ref{OUT}.
Then we discuss some consequences of our results, for the convergence of spectral measures of loop holonomies, and for the high-dimensional limit of the Brownian loop in $U(N)$.
Finally, we discuss how the master field can be considered as a natural family of infinite-dimensional unitary transport operators, following up some suggestions of Singer \cite{MR1373007}.
  
\subsection{Yang--Mills measure on a compact Riemann surface}\label{YMMC}
We recall in this subsection the approach of L\'evy \cite{MR2006374} to the Yang--Mills measure. 
Let $\Sigma$ be a compact Riemann surface and let $G$ be a compact Lie group.
Write $T$ for the area of $\Sigma$ and denote by $1$ the unit element of $G$.
Fix a bi-invariant Riemannian metric on $G$ and denote the associated heat kernel by $p=(p_t(g):t\in(0,\infty),g\in G)$.
Thus $p$ is the unique smooth positive function on $(0,\infty)\times G$ such that
$$
\frac{\pd p}{\pd t}=\frac12\Delta p
$$
and, for all continuous functions $f$ on $G$, in the limit $t\to0$,
$$
\int_Gf(g)p_t(g)dg\to f(1).
$$
Here we have written $\Delta$ for the Laplace--Beltrami operator and $dg$ for the normalized Haar measure on $G$.

We specialize in later sections to the case where $\Sigma$ is the sphere $\Sbb_T$ of area $T$, and where $G$ is the  group $U(N)$ of unitary $N\times N$ matrices.
The Lie algebra of $U(N)$ is the space of skew-Hermitian matrices $\mfu(N)$.
We specify a metric on $U(N)$ by the following choice of inner product on $\mfu(N)$
\begin{equation}\label{metric}
\<g_1,g_2\>=N\Tr(g_1g_2^*)
\end{equation}
where $\Tr(g)=\sum_{i=1}^Ng_{ii}$.
This dependence of the metric on $N$, which is standard in random matrix theory, is chosen so that the objects of interest to us have a non-trivial scaling limit as $N\to\infty$.

Write $P(\Sigma)$ for the set of oriented paths of finite length in $\Sigma$, considered modulo reparametrization. 
Denote the length of a path $\g\in P(\Sigma)$ by $\ell(\g)$. 
We consider $P(\Sigma)$ as a metric space, with the {\em length metric}
\begin{equation}\label{metricP}
d(\g,\g')=|\ell(\g)-\ell(\g')|+\inf_{\t,\t'}\sup_{t\in[0,1]}d(\g_{\t(t)},\g'_{\t'(t)})
\end{equation}
where the infimum is taken over reparametrizations $\t,\t'$ of $\g,\g'$ by $[0,1]$.
Each path $\g$ has a starting point $\underline\g$ and a terminal point $\overline\g$.
Write $\g^{-1}$ for the reversal of $\g$, that is, the path of reverse orientation from $\overline\g$ to $\underline\g$.
For paths $\g_1,\g_2$ such that $\overline\g_1=\underline\g_2$, we write $\g_1\g_2$ for the path obtained by their concatenation.
Write $L(\Sigma)$ for the set of loops of finite length in $\Sigma$. 
Thus
$$
L(\Sigma)=\{\g\in P(\Sigma):\underline\g=\overline\g\}.
$$
Write also $P_{x,y}(\Sbb_T)$ for the set of paths from $x$ to $y$, and $L_x(\Sbb_T)$ for the set of loops based at $x$.
Given paths $\g,\g_0$, we say that $\g_0$ is a {\em simple reduction} of $\g$ if we can write $\g$ and $\g_0$ as concatenations
$$
\g=\g_1\g_*\g_*^{-1}\g_2,\q \g_0=\g_1\g_2
$$
for some paths $\g_1,\g_2,\g_*$. 
More generally, we say that $\g_0$ is a {\em reduction} of $\g$ if there is a sequence of paths $(\g_1,\dots,\g_n)$ such that $\g_{i-1}$ is a simple reduction of $\g_i$ for all $i$ and $\g_n=\g$.
Given paths $\g_1,\g_2$, we write $\g_1\sim\g_2$ if there is a path $\g_0$ which is a reduction of both $\g_1$ and $\g_2$.

Given a subset $\G$ of $P(\Sigma)$ which is closed under reversal and concatenation, we call a function $h:\G\to G$ {\em multiplicative} if
$$
h_{\g^{-1}}=h^{-1}_\g,\q h_{\g_1\g_2}=h_{\g_2}h_{\g_1}
$$
for all $\g$ and for all $\g_1,\g_2$ with $\overline{\g}_1=\underline{\g}_2$.
We denote the set of such multiplicative functions by $\cM(\G,G)$.
Note that, for any such function $h$, we have $h_{\g_1}=h_{\g_2}$ whenever $\g_1\sim\g_2$.

We say that a finite subset $\Gbb=\{e_1,\dots,e_m\}\sse P(\Sigma)$ is an {\em embedded graph} in $\Sigma$ if each path $e_j$ is non-constant, is either simple or a simple loop, and meets other paths $e_k$ only at its endpoints.
Then we refer to the sequence $(e_1,\dots,e_m)$ as a {\em labelled embedded graph}.
We will sometimes write abusively $\Gbb=(V,E,F)$ to mean that $V$ is the set of endpoints of paths in $\Gbb$, $E=\Gbb$ and $F$ is the set of connected components of $\Sigma\sm\{e^*:e\in\Gbb\}$.
Here $e^*$ denotes the range of $e$.
We say that an embedded graph $\Gbb$ is a {\em discretization} of $\Sigma$ if each face $f\in F$ is a simply connected domain in $\Sigma$.
Write $P(\Gbb)$ for the subset of $P(\Sigma)$ obtained by concatenations of the paths in $\Gbb$ and their reversals.   

A random process $H=(H_\g:\g\in P(\Sigma))$ (on some probability space $(\O,\cF,\PP)$) taking values in $G$ is a {\em Yang--Mills holonomy field} if
\begin{itemize}
\item[(a)] $H$ is multiplicative, that is, $H(\o)\in\cM(P(\Sigma),G)$ for all $\o\in\O$, 
\item[(b)] for any discretization $\Gbb=(V,E,F)$ of $\Sigma$ and all $h\in\cM(P(\Gbb),G)$,
\begin{equation}
\label{Discrete Yang Mills}
\PP(H_e\in dh_e\text{ for all }e\in E)=p_T(1)^{-1}\prod_{f\in F}p_{|f|}(h_f)\prod_{e\in E}dh_e
\end{equation}
\item[(c)] for any convergent sequence $\g(n)\to\g$ in $P(\Sigma)$ with fixed endpoints,
$$
H_{\g(n)}\to H_\g\q\text{in probability.}
$$
\end{itemize}
Here, for each face $f$, we have written $|f|$ for the area of $f$ and we have chosen a simple loop $\g(f)\in L(\Gbb)$ whose range is the boundary of $f$ and set $h_f=h_{\g(f)}$.
The invariance properties of Haar measure and the heat kernel under inversion and conjugation guarantee that the expression \eqref{Discrete Yang Mills}
for the finite-dimensional distributions of $H$ does not depend on the orientations of the edges, nor on the choice of loops bounding the faces.

Define coordinate functions $H_\g:\cM(P(\Sigma),G)\to G$ by $H_\g(h)=h_\g$ and define a $\s$-algebra $\cC$ on $\cM(P(\Sigma),G)$ by
$$
\cC=\s(H_\g:\g\in P(\Sigma)).
$$
Then $(H_\g:\g\in P(\Sigma))$ is a multiplicative random process on $(\cM(P(\Sigma),G),\cC)$.
We use the same notation $(H_\g:\g\in P(\Sigma))$ both for this canonical coordinate process and also, more generally, for any multiplicative random process.

Our basic object of study is the {\em Yang--Mills measure} provided by the following theorem of L\'evy \cite[Theorem 2.62]{MR2006374},
building on earlier work of Driver \cite{MR1006295} and Sengupta \cite{MR1346931}.

\begin{thm}\label{Thm Exist YM}
There is a unique probability measure on $(\cM(P(\Sigma),G),\cC)$ under which the coordinate process $(H_\g:\g\in P(\Sigma))$ is a
Yang--Mills holonomy field.
\end{thm}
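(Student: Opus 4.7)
The plan is to construct the measure in two stages. First, build the joint distribution of $(H_e)_{e\in E}$ on a fixed embedded graph $\Gbb$ directly from the prescription \eqref{Discrete Yang Mills}, and verify that the resulting finite-dimensional distributions are consistent under refinement of the graph. This produces a process $(H_\g : \g \in P_0(\Sigma))$ on the countable set $P_0(\Sigma) = \bigcup_n P(\Gbb_n)$, where $(\Gbb_n)$ is a refining sequence of discretizations whose mesh tends to $0$. Second, extend $H$ to all of $P(\Sigma)$ by approximating each path of finite length in the length metric \eqref{metricP} by paths in $P_0(\Sigma)$ and passing to the limit in probability.

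For the first stage, fix a discretization $\Gbb=(V,E,F)$ and consider the measure on $G^E$ with density $p_T(1)^{-1}\prod_{f\in F}p_{|f|}(h_f)$ relative to $\prod_e dh_e$. To see that this is a probability measure, one argues by induction on the number of faces, using the convolution identity $\int_G p_s(hk)\, p_t(k^{-1})\,dk = p_{s+t}(h)$ together with $\sum_f |f|=T$: each inductive step removes an edge bordering two distinct faces and merges them, reducing to a single face of total area $T$. One must also check that the right-hand side of \eqref{Discrete Yang Mills} is independent of the choice of edge orientations and of bounding loops for the faces, which follows from the bi-invariance and the symmetry $p_t(g)=p_t(g^{-1})$ of the heat kernel. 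Consistency under refinement, either subdividing an edge or cutting a face with a new edge, is again handled by the convolution identity. This yields, via Kolmogorov's extension theorem, a probability measure under which $(H_\g:\g\in P_0(\Sigma))$ is multiplicative and satisfies (b) for each $\Gbb_n$.

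For the second stage, given $\g \in P(\Sigma)$, pick $\g_n \in P(\Gbb_n)$ with $d(\g_n,\g)\to 0$; one must show that $(H_{\g_n})$ is Cauchy in probability and that the limit does not depend on the approximating sequence. The key quantitative input is that for $\g_m,\g_n\in P_0(\Sigma)$ which are close in the length metric, one can embed them in a common refinement and express $H_{\g_m}H_{\g_n}^{-1}$ as an alternating product of holonomies of loops bounding thin faces of small combined area; an elementary estimate on the heat kernel --- namely that $p_t$ concentrates near the identity as $t\to 0$ --- then shows that such a product is close to $1$ in probability, uniformly in the choice of common refinement. Setting $H_\g$ equal to the limit, multiplicativity extends by continuity of multiplication in $G$, and (c) for the extended process follows from the same small-face estimate applied with $\g_m$ replaced by $\g(n)$.

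Uniqueness is then immediate: any Yang--Mills holonomy field has, by (a) and (b), a prescribed law on each $\cM(P(\Gbb_n),G)$, hence a prescribed law on $\cM(P_0(\Sigma),G)$; and property (c), together with the fact that $\cC$ is generated by $\{H_\g:\g\in P(\Sigma)\}$ and that every $\g$ is a limit in probability of paths in $P_0(\Sigma)$, forces the full law on $\cM(P(\Sigma),G)$ to be determined. The main obstacle is the second stage: proving the small-area estimate uniformly in all sufficiently fine discretizations, since distinct piecewise approximations $\g_n,\g_m$ do not lie in any canonical common graph, and the combinatorics of comparing them through a joint refinement has to be controlled by the length-metric distance alone.
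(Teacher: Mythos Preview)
The paper does not give its own proof of this theorem: it is quoted as a result of L\'evy \cite[Theorem 2.62]{MR2006374}, building on Driver \cite{MR1006295} and Sengupta \cite{MR1346931}. So there is no in-paper argument to compare against; the statement functions here as background input.

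Your outline is broadly the strategy that L\'evy carries out, and you have correctly isolated the genuinely hard step as the second stage. A few remarks on where your sketch understates the difficulty. First, consistency of \eqref{Discrete Yang Mills} is needed not just under the two local moves you name (edge subdivision, face bisection) but under passage between \emph{arbitrary} pairs of discretizations; reducing this to a sequence of elementary moves requires some combinatorial work on embedded graphs in a surface. Second, the extension step cannot be done purely by ``Cauchy in probability along one approximating sequence'': to obtain a measure on $\cM(P(\Sigma),G)$ rather than merely a collection of marginals, one needs a uniform modulus of continuity of the type $d_G(H_{\g},H_{\g'})$ controlled by the length-metric distance $d(\g,\g')$, and this is where the bulk of L\'evy's monograph goes (lasso decompositions, the estimate that a loop of small enclosed area has holonomy close to $1$, and so on). Your last paragraph names exactly this obstacle but does not resolve it; as a proof this would be incomplete, though as a roadmap pointing to \cite{MR2006374} it is accurate.
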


Let $H=(H_\g:\g\in P(\Sigma))$ be a Yang--Mills holonomy field in $G$.  
We note the following properties of {\em gauge invariance} and {\em invariance under area-preserving diffeomorphisms}, 
which follow from invariance properties of \eqref{Discrete Yang Mills} and the uniqueness statement of the theorem.
Let $s:\Sigma\to G$ be a measurable function and let $\psi:\Sigma\to\Sigma$ be an area-preserving diffeomorphism.
Consider the processes 
$$
H^s=(s(\overline\g)H_\g s(\underline\g)^{-1}:\g\in P(\Sigma)),\q H^\psi=(H_{\psi\circ\g}:\g\in P(\Sigma)).
$$
Then $H^s$ and $H^\psi$ have the same law as $H$.
In particular, the relevant data from $\Sigma$ are just its genus and the total area $T$.

\subsection{Embedded Brownian loops\label{section embedded bridge}}
We specialize now to the case where the surface $\Sigma$ is the sphere $\Sbb_T$ of area $T$.
In each Yang--Mills holonomy field $H=(H_\g:\g\in P(\Sbb_T))$, there are many embedded Brownian loops in $G$ based at $1$ and parametrized by $[0,T]$, as we now show.
Recall that a random process $B=(B_t:t\in[0,T])$ taking values in $G$ is a Brownian loop based at $1$ if
\begin{itemize}
\item[(a)] $B$ is continuous, that is, $B(\o)\in C([0,T],G)$ for all $\o\in\O$,
\item[(b)] for all $n\in\N$, all $g_1,\dots,g_{n-1}\in G$ and all increasing sequences $(t_1,\dots,t_{n-1})$ in $(0,T)$,
setting $g_0=g_n=1$ and $t_0=0$ and $t_n=T$ and writing $t_k=s_1+\dots+s_k$, 
$$
\PP(B_{t_k}\in dg_k\text{ for }k=1,\dots,n-1)=\frac{\prod_{i=1}^np_{s_i}(g_ig_{i-1}^{-1})}{p_T(1)}\prod_{k=1}^{n-1}dg_k.
$$
\end{itemize}
Choose a point $x$ in $\Sbb_T$ and let $P$ be a tangent plane to $\Sbb_T$ at $x$, considered in its usual embedding in $\R^3$.
Choose a line $L$ in $P$ through $x$ and rotate $\Sbb_T$ once around $L$.
The intersections of $P$ with $\Sbb_T$, which are a nested family of circles, may be given a consistent orientation and then 
considered as a family in $L(\Sbb_T)$, all starting from $x$.
We can parametrize this family of loops as $(l(t):t\in[0,T])$ so that the domain inside $l(t)$ has area $t$ for all $T$.
Then, for all $n\in\N$ and all sequences $(t_1,\dots,t_{n-1})$ in $(0,T)$, the loops $l(t_1),\dots,l(t_{n-1})$ are the edges of a discretization of $\Sbb_T$.
Define a random process $\b=(\b_t:t\in[0,T])$ in $G$ by
$$
\b_t=H_{l(t)}.
$$
It is straightforward to deduce from property (b) of the Yang--Mills holonomy field that the finite-dimensional distributions of $\b$ satisfy condition (b) for the Brownian loop.
Hence, by standard arguments, $\b$ has a continuous version, $B$ say, which is a Brownian loop in $G$ based at $1$.
The reader will see many ways to vary this construction while still obtaining a Brownian loop.
In each case, we obtain from a nested loop of loops of finite length in $\Sbb_T$ a Brownian loop in $G$, which of course does not have finite length.

\subsection{Convergence to the master field on the sphere}\label{MFSS}
We specialize from now on to the case where the structure group $G$ is the group of $N\times N$ unitary matrices $U(N)$.
Let $H=(H_\g:\g\in P(\Sbb_T))$ be a Yang--Mills holonomy field in $U(N)$ over the sphere $\Sbb_T$ of area $T$.
We will write $H$ rather than $H^N$ throughout, to lighten the notation.
Our main results establish a law of large numbers for this random field in the limit $N\to\infty$, which we express for now in terms of the
normalized trace
$$
\tr(g)=N^{-1}\sum_{i=1}^Ng_{ii}.
$$
The limit object is a certain function on loops
$$
\Phi_T:L(\Sbb_T)\to\C
$$
known in the physics literature as the {\em master field on the sphere}.
It will be convenient to define $\Phi_T$ by
$$
\Phi_T(l)=
\begin{cases}\lim_{N\to\infty}\E(\tr(H_l)),&\text{if this limit exists},\\0,&\text{otherwise}.\end{cases}
$$
Our first main result establishes concentration.
\begin{thm} 
\label{main}
For all loops $l\in L(\Sbb_T)$,
$$
\tr(H_l)\to\Phi_T(l)\q\text{in probability as $N\to\infty$}.
$$
\end{thm}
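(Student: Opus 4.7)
The plan is to implement the three-step strategy already advertised in the introduction: start with simple loops, extend to regular loops via the Makeenko--Migdal equations, then extend to all rectifiable loops by approximation and a uniform bound. The overall scheme is: first identify a deterministic limit $\Phi_T(l)$ on a sufficiently rich class of loops with a concentration estimate; then show that the master field is determined on all loops of finite length by its restriction to simple loops plus the Makeenko--Migdal relations; then bound the fluctuations of $\tr(H_l)$ uniformly in $N$ along a suitable approximating sequence.

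First, for a simple loop $l$ bounding domains of areas $t$ and $T-t$, I would use Proposition \ref{Representation simple loops beta ensemble} to write $\E(\tr(H_l))$ and $\cov(\tr(H_l),\tr(H_l))$ as functionals of a discrete $\beta$-ensemble. The large-$N$ asymptotics of such ensembles, as developed by Guionnet--Ma\"ida, Johansson and F\'eral, then give convergence in probability of $\tr(H_l)$ to a deterministic $\Phi_T(l)$ defined via the minimizer of a variational problem. Gauge invariance and area-preserving-diffeomorphism invariance reduce every simple loop in $\Sbb_T$ to this pair-of-areas data, so this step yields Theorem \ref{main} for all simple loops, together with a quantitative $O(N^{-2})$ variance bound.

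Second, for the extension to a generic class of loops traced out in an embedded graph $\Gbb$, I would feed this simple-loop input into the Makeenko--Migdal equations. These equations, valid on the sphere by \cite{MR3631396}, express the derivative of $\E(\tr(H_l))$ (or more general polynomials in traces) with respect to area-changes at a transverse crossing as a bilinear combination of similar quantities for the two loops obtained by smoothing the crossing. Together with an induction on the number of crossings (or on a complexity such as ``area of the smallest face times number of crossings''), started from the simple-loop base case, one gets pointwise convergence of $\E(\tr(H_l))$ to a limit $\Phi_T(l)$ for all loops in $\Gbb$. Concentration is propagated along the same induction: using the variance version of the Makeenko--Migdal equations (differentiating the covariance of traces) together with the $O(N^{-2})$ base case, one shows $\mathrm{Var}(\tr(H_l))\to 0$, which upgrades $L^1$-convergence to convergence in probability.

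Third, for an arbitrary loop $l$ of finite length, I would approximate $l$ by a sequence $l_n$ of piecewise geodesic loops contained in increasingly fine discretizations, using the length metric \eqref{metricP} and the continuity property (c) of the Yang--Mills holonomy field to get $H_{l_n}\to H_l$ in probability for each fixed $N$. The key quantitative ingredient will be a uniform-in-$N$ Lipschitz estimate of the form $|\tr(H_l)-\tr(H_{l'})|\le C\,d(l,l')$ or an $L^2$-analogue derived from the heat-kernel structure of \eqref{Discrete Yang Mills}, which allows one to exchange the $n\to\infty$ and $N\to\infty$ limits. The hard part is this last step: making sure the approximation is uniform in $N$ and compatible with the Makeenko--Migdal induction, because the number of crossings of $l_n$ may blow up as $n\to\infty$ and the MM-based estimates must be arranged to give bounds independent of this combinatorial complexity, depending only on the length. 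This is exactly the point where the concentration/characterization problem had been left open in \cite{MR3631396}, and it is where the main technical work of Sections \ref{MM} and \ref{RL} should be concentrated.
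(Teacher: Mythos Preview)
Your three-step outline matches the paper's strategy exactly: Proposition~\ref{NFL} (simple loops via the discrete $\beta$-ensemble), Proposition~\ref{REG} (regular loops via Makeenko--Migdal, inducting on the number of self-intersections, with the companion equation for $\phi^N_{\mfl,\mfl^{-1}}$ playing the role of your ``variance version''), and Proposition~\ref{ALL} (extension by piecewise-geodesic approximation).

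One correction concerning step three. The uniform-in-$N$ estimate that lets you exchange the limits is \emph{not} an MM-based bound, and your worry that ``the MM-based estimates must be arranged to give bounds independent of this combinatorial complexity'' is misplaced. The paper never invokes Makeenko--Migdal in Section~\ref{RL}. Instead it uses the functional $\Psi_N(\a)=\sqrt{1-\E(\tr(H_\a))}$, which is subadditive under concatenation, together with a lasso decomposition and an isoperimetric inequality (Proposition~\ref{CL}, following L\'evy~\cite{MR2667871}). This yields
\[
\|\tr(H_l)-\tr(H_{l_n})\|_2\le\sqrt2\,\Psi_N(l\,l_n^{-1})\le\sqrt2\,K\,\ell(l)^{3/4}\bigl(\ell(l)-\ell(l_n)\bigr)^{1/4},
\]
which depends only on lengths, is uniform in $N$, and is entirely insensitive to the number of crossings of $l_n$. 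So the ``hard part'' you anticipate is handled by purely geometric means borrowed from the planar case, not by sharpening the MM induction.
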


Since $|\tr(H_l)|\le1$, this implies in particular that the limit considered in the definition of the master field always exists:
$$
\Phi_T(l)=\lim_{N\to\infty}\E(\tr(H_l)).
$$
The master field then inherits certain properties from its finite-dimensional approximations $\E(\tr(H_l))$,
as the reader may easily check.

\begin{prop}\label{state master field} 
The master field $\Phi_T$ has the following properties: 
\begin{enumerate}
\item[{\rm (a)}]$\Phi_T=1$ on constant loops and $\Phi_T(l)=\Phi_T(l^{-1})\in[-1,1]$ for all loops $l$,  
\item[{\rm (b)}]$\Phi_T(\g_1\g_2)=\Phi_T(\g_2\g_1)$ for all pairs of paths $\g_1,\g_2$ such that $\g_1\g_2$ is a loop,
\item[{\rm (c)}]$\Phi_T(l_1)=\Phi_T(l_2)$ whenever $l_1\sim l_2$,
\item[{\rm (d)}]for all $x,y\in\Sbb_T$, all $n\in\N$, all $a_1,\dots,a_n\in\C$ and all $\g_1,\dots,\g_n\in P_{x,y}(\Sbb_T)$,
$$
\sum_{i,j=1}^n a_i\overline{a_j}\Phi_T(\g_i\g_j^{-1})\ge0
$$ 
\item[{\rm (e)}]for all loops $l$ and any area-preserving diffeomorphism $\psi$ of $\Sbb_T$,
$$
\Phi_T(\psi(l))=\Phi_T(l).
$$ 
\end{enumerate}
\end{prop}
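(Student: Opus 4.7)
The plan is to verify each property first at finite $N$ for the pre-limit $\phi_N(l) := \E(\tr(H_l))$, and then pass to the limit $N \to \infty$. Since $|\tr(H_l)| \le 1$, Theorem~\ref{main} combined with bounded convergence ensures that $\phi_N(l) \to \Phi_T(l)$, so any linear identity or closed inequality in the $\phi_N$ transfers directly to $\Phi_T$. The proposition thus reduces to a checklist of symmetries at finite $N$.

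Properties (a), (b), (c) and (e) should all follow from pointwise algebraic identities for $\tr(H_l)$ together with the symmetries of the Yang--Mills measure already recorded in Section~\ref{YMMC}. Specifically, for a constant loop one has $H_l = 1$; the reversal identity $H_{l^{-1}} = H_l^{*}$ combined with $\tr(U^*) = \overline{\tr(U)}$ yields $\phi_N(l^{-1}) = \overline{\phi_N(l)}$; cyclicity of the trace gives (b); the observation, recorded just after the definition of multiplicative functions, that $H_{l_1} = H_{l_2}$ whenever $l_1 \sim l_2$ gives (c); and the equality in law $H^\psi \stackrel{d}{=} H$ stated at the end of Section~\ref{YMMC} gives (e). For (d), the idea is to introduce the random matrix $X := \sum_i a_i H_{\gamma_i}$, note that $\tr(X^*X) \ge 0$ pointwise, and expand using the multiplicativity relation $H_{\gamma_i \gamma_j^{-1}} = H_{\gamma_j}^{*} H_{\gamma_i}$ to rewrite the left-hand side of the inequality as $\tr(X^*X)$; taking expectations and then the limit $N\to\infty$ yields the stated positive semidefiniteness.

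The one point that deserves a touch of care, and which I would flag as the main thing to get right rather than a true obstacle, is the realness of $\Phi_T(l)$ in (a). This is not a pointwise identity but relies on the invariance of the Yang--Mills measure under entrywise complex conjugation $g \mapsto \bar g$, which in turn holds because the heat kernel $p_t$ is real-valued and Haar measure is invariant under $g \mapsto \bar g$. This invariance forces $\bar H \stackrel{d}{=} H$, so $\phi_N(l) = \overline{\phi_N(l)} \in \R$, and the deterministic bound $|\tr(H_l)| \le 1$ then places $\Phi_T(l)$ in $[-1,1]$. Beyond this single symmetry input, the proof is entirely routine.
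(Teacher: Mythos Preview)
Your proposal is correct and follows precisely the approach the paper indicates: the paper simply states that the master field ``inherits certain properties from its finite-dimensional approximations $\E(\tr(H_l))$, as the reader may easily check,'' and your argument supplies exactly this check, including the only non-obvious point (realness via the automorphism $g\mapsto\bar g$ of $U(N)$, which preserves Haar measure and the heat kernel). There is nothing to add.
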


\subsection{Characterization of the master field on the sphere}\label{MFSS2}
Our second main result is an analytic characterization of the master field. 
This will require some associated notions which we now introduce.
Consider the following variational problem: minimize the functional 
\begin{equation}\label{SCE}
\cI_T(\mu)=\int_{\R^2}\left\{\tfrac12(x^2+y^2)T-2\log|x-y|\right\}\mu(dx)\mu(dy)
\end{equation}
over the set of probability measures $\mu$ on $\R$ such that 
$$
\mu([a,b])\le b-a
$$ 
whenever $a\le b$.
We note for later use some statements concerning this problem, proofs of which may be found in L\'evy and Ma\"ida \cite{MR3440793}.
First, the functional $\cI_T$ is well-defined on the given set, with values in $(-\infty,\infty]$, and has a unique minimizer, which we denote by $\mu_T$.
Then $\mu_T$ has a continuous density function $\rho_T$ with respect to Lebesgue measure, with $0\le\rho_T(x)\le1$ for all $x$.
In the case $T\in(0,\pi^2]$, $\rho_T$ is the semi-circle density of variance $1/T$, given by
\begin{equation}\label{SCD}
\rho_T(x)=\frac{T}{2\pi}\sqrt{\frac4T-x^2},\q |x|\le2/\sqrt T.
\end{equation}
Note that the right-hand side in \eqref{SCD} exceeds $1$ when $x=0$ for $T>\pi^2$.

For $T\in(\pi^2,\infty)$, there is a unique $k\in(0,1)$ such that
$$
T=8EK-4(1-k^2)K^2
$$
where $K=K(k)$ and $E=E(k)$ are, respectively, the complete elliptic integrals of the first and second kind.
Set $\a=4kK/T$ and $\b=4K/T$.
Then the minimizing density $\rho_T$ is identically $1$ on $[-\a,\a]$, is supported on $[-\b,\b]$, and satisfies, for $|x|\in(\a,\b)$,
\begin{equation}
\rho_T(x)=\frac{2\sqrt{(x^2-\a^2)(\b^2-x^2)}}{\pi\b|x|}\int_0^1\frac{ds}{(1-\a^2s^2/x^2)\sqrt{(1-s^2)(1-\a^2s^2/\b^2)}}.
\label{DCG}
\end{equation}
See \cite[Lemma 4.7, equation (4.14)]{MR3440793}.
See also \cite[Figure 7]{MR3440793} for an informative plot of the family of densities $(\rho_T:T\in(0,\infty))$. 

Let us say that $l\in L(\Sbb_T)$ is a {\em regular loop} if there is a labelled embedded graph $\Gbb_l=(e_1,\dots,e_m)$ in $P(\Sbb_T)$ such that $l$ is given by the concatentation $e_1\dots e_m$, 
in which $\underline e_1$ has degree $2$ and in which $\underline e_2,\dots,\underline e_m$ have degree $4$ and are transverse self-intersections of $l$.
Here, we say that a self-intersection of $l$ at a vertex $v$ of degree $4$ is {\em transverse} if, as $l$ passes through $v$, it arrives and leaves by opposite edges.
Note that $\Gbb_l$ is then uniquely determined by $l$.

Given a regular loop $l$, and a point $v$ of self-intersection of $l$, there are two regular loops $l_v$ and $\hat l_v$ starting from $v$, 
obtained by {\em splitting $l$ at $v$}, that is, by following $l$ on its first and second exit from $v$, respectively, until it first returns to $v$.
Note that both $l_v$ and $\hat l_v$ have fewer self-intersections than $l$.
For each face $f$ of $\Gbb$, define
$$
\sgn_v(f)=
\begin{cases}
0,&\text{if $v$ is not a boundary vertex of $f$},\\
1,&\text{if $f$ is adjacent to both outgoing or both incoming edges at $v$},\\
-1,&\text{if $f$ is adjacent to one outgoing and one incoming edge at $v$}.
\end{cases}
$$
For $\eta>0$, we say that a smooth map 
$$
\th:[0,\eta)\times\Sbb_T\to\Sbb_T
$$
is a {\em Makeenko--Migdal flow} at $(l,v)$ if 
\begin{enumerate}
\item[(a)] $\th(0,x)=x$ for all $x$,
\item[(b)] $\th(t,.)$ is a diffeomorphism of $\Sbb_T$ for all $t$,
\item[(c)] for any face $f$ of the embedded graph $\Gbb$,   
$$
\frac{d}{dt}|\theta(t,f)|=\sgn_v(f).
$$ 
\end{enumerate}
We can now state our analytic characterization of the master field.
\begin{thm}\label{EUMF}
The master field $\Phi_T:L(\Sbb_T)\to\C$ has the following properties, which characterize it uniquely:
\begin{enumerate}
\item[{\rm (a)}] $\Phi_T$ is continuous in length,
\item[{\rm (b)}] $\Phi_T$ is {\em invariant under reduction:} for all pairs of loops $l_1,l_2$ with $l_1\sim l_2$,
$$
\Phi_T(l_1)=\Phi_T(l_2)
$$
\item[{\rm (c)}] $\Phi_T$ is {\em invariant under area-preserving homeomorphisms:} for all regular loops $l$ and any area-preserving homeomorphism $\th$ of $\Sbb_T$ such that $\th(l)\in L(\Sbb_T)$,
$$
\Phi_T(\th(l))=\Phi_T(l)
$$ 
\item[{\rm (d)}] $\Phi_T$ {\em satisfies the Makeenko--Migdal equations:} for all regular loops $l$, all points $v$ of self-intersection of $l$, and any Makeenko--Migdal flow $\th$ at $(l,v)$,
\begin{equation}\label{SLMM}
\left.\frac{d}{dt}\right|_{t=0} \Phi_T(\theta(t,l))=\Phi_T(l_v)\Phi_T(\hat l_v)
\end{equation}
\item[{\rm (e)}] for all simple loops $l$ and all $n\in\N$,
\begin{equation}\label{SLD}
\Phi_T(l^n)=\frac2{n\pi}\int_0^\infty\cosh\left\{(a_1-a_2)nx/2\right\}\sin\{n\pi\rho_T(x)\}dx
\end{equation}
where $a_1$ and $a_2$ are the areas of the connected components of $\Sbb_T\sm l^*$.
\end{enumerate}
\end{thm}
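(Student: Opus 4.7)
The plan is to prove existence and uniqueness in parallel: first verify that $\Phi_T$ satisfies properties (a)--(e), and then show that these properties determine any function $\Psi:L(\Sbb_T)\to\C$ uniquely, via an induction on the number of self-intersections of a regular loop, reducing down via the Makeenko--Migdal equations.

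For the existence direction, items (b) and the area-preserving \emph{diffeomorphism} invariance are already recorded in Proposition \ref{state master field}; item (c) for general area-preserving homeomorphisms follows by approximating such $\theta$ by diffeomorphisms (using standard smoothing in the 2-dimensional area-preserving homeomorphism group) together with (a). To establish (a) itself, one uses the uniform bound $|\tr(H_l)|\le 1$ and property (c) of the Yang--Mills holonomy field to see that $\E[\tr(H_l)]$ is continuous in $l$ with respect to the length metric; this passes to the $N\to\infty$ limit. For (e), I would invoke the $\beta$-ensemble representation given in Proposition \ref{Representation simple loops beta ensemble}, which reduces $\E[\tr(H_{l^n})]$ for simple $l$ to a linear statistic of a discrete $\beta$-ensemble; the asymptotic analysis of such ensembles in the work of Guionnet--Ma\"ida, Johansson and F\'eral cited in the introduction identifies the limit as the claimed integral against $\rho_T$, where $\rho_T$ emerges as the density of the minimizer of the variational problem (\ref{SCE}). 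For (d), one starts from the finite-$N$ Makeenko--Migdal equations of L\'evy and Driver--Gabriel--Hall--Kemp, which schematically read $\frac{d}{dt}\E[\tr(H_{\theta(t,l)})] = \E[\tr(H_{l_v})\tr(H_{\hat l_v})]$; Theorem \ref{main} gives concentration of traces, so the product on the right factorizes in the limit to $\Phi_T(l_v)\Phi_T(\hat l_v)$.

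For uniqueness, let $\Psi$ satisfy (a)--(e). I would induct on the number of self-intersections of a regular loop $l$: when $l$ is simple, (e) pins down $\Psi(l^n)=\Phi_T(l^n)$ for all $n$; when $l$ has $n\ge 1$ self-intersections, fix a vertex $v$ so that $l_v$ and $\hat l_v$ each have strictly fewer self-intersections, whence $\Psi(l_v)=\Phi_T(l_v)$ and $\Psi(\hat l_v)=\Phi_T(\hat l_v)$ by induction. Then (d) says that $\Psi-\Phi_T$ is constant along every Makeenko--Migdal flow at $(l,v)$, and (c) says it is constant along orbits of area-preserving homeomorphisms; combining these, it is constant on a large orbit in the space of area configurations for the combinatorial type of $l$. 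Flowing a chosen face area to $0$ along this orbit yields a degenerate loop in which that face collapses, producing via (b) a loop with fewer self-intersections; continuity (a) lets one take this degenerate limit, and induction pins down the constant to be $0$. Finally, regular loops are dense in $L(\Sbb_T)$ for the length metric, and (a) extends equality to all finite-length loops.

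The main obstacle I anticipate is the geometric step at the core of the induction: given a regular loop $l$ with self-intersection $v$, one must show that by composing Makeenko--Migdal flows at $v$ (and at other vertices if needed) with area-preserving homeomorphisms, one can always drive \emph{some} face adjacent to $v$ to zero area while remaining in the admissible region of face-area configurations for the given combinatorial type. This is essentially a convex-geometric statement about the image and fibers of the face-area map on the space of regular loops of fixed type, and ruling out pathologies (e.g.\ orbits trapped in the interior of the configuration polytope) seems to demand careful combinatorial case analysis on the Euler characteristic data of the embedded graph $\Gbb_l$. The remaining approximation arguments---promoting (c) from diffeomorphisms to homeomorphisms, and passing from regular to arbitrary finite-length loops using (a)---are, by contrast, expected to be routine density/smoothing arguments on $\Sbb_T$.
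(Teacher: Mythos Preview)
Your overall architecture---verify (a)--(e) for $\Phi_T$, then prove uniqueness by induction on the number of self-intersections using the Makeenko--Migdal flow, and finally extend from regular to all loops by density---is the same as the paper's.  Two substantive gaps remain, however.

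First, your argument for (a) does not go through as stated.  You claim that continuity of $l\mapsto\E[\tr(H_l)]$ for each fixed $N$ ``passes to the $N\to\infty$ limit,'' but a pointwise limit of continuous functions need not be continuous; some uniformity in $N$ is required.  The paper obtains this in two stages: it first proves \emph{uniform} convergence of $\E[\tr(H_l)]$ on each $L_n(\Sbb_T)$ (Proposition~\ref{REG}), which yields uniform continuity of $\Phi_T$ on $L_n(\Sbb_T)$ via the face-area parametrisation, and then extends to all of $L(\Sbb_T)$ by a separate quantitative argument (Section~\ref{RL}) using lasso decompositions, piecewise-geodesic approximation, and an isoperimetric-type estimate.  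The extension is not the ``routine density/smoothing'' you anticipate.

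Second, on the geometric step you correctly flag as the main obstacle, your proposed resolution---collapse some face adjacent to a chosen vertex and thereby reduce the self-intersection count by one---is not what the paper does and would require nontrivial additional justification.  The paper's key lemma is that the Makeenko--Migdal vectors $\{\mu_i:i\in\cI\}$ span precisely $\{1,n_\mfl\}^\perp$ in $\R^\cF$, where $n_\mfl$ is the winding-number vector.  Hence the orbit through a face-area vector $a$ under all MM flows is the full affine slice $\{a':\sum_k a'_k=T,\ \sum_k a'_kn_\mfl(k)=\sum_k a_kn_\mfl(k)\}$ intersected with $\overline{\Delta_\mfl(T)}$.  One then flows not to a codimension-one face but \emph{all the way} to the vertex of this slice where only the two faces $k_0,k_*$ of extremal winding number carry area.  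At that boundary point the loop reduces (via (b)) to $s^{n_*}$ for a simple loop $s$, and its value is read off directly from (e).  This is exactly why (e) must specify $\Phi_T(l^n)$ for \emph{every} $n\in\N$: the induction does not bottom out at simple loops through a chain of single-face collapses, but rather jumps in one step to a power of a simple loop whose exponent $n_*$ can be as large as the number of self-intersections.  Your alternative would need a proof that collapsing one face is always reachable within the simplex along MM directions \emph{and} always strictly decreases the self-intersection count, neither of which is obvious.
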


Note that the integrand in \eqref{SLD} vanishes whenever $\rho_T(x)=0$ or $\rho_T(x)=1$.
In fact, it suffices for uniqueness that property (e) hold in the case $n=1$, as we show in Subsection \ref{MOREU}.

\subsection{Outline of the main argument}\label{OUT}
We now outline the main steps in our proof of Theorems \ref{main} and \ref{EUMF}.
We build progressively an understanding of the limit, first for simple loops, then regular loops, and finally for all loops of finite length.
First, we prove in Subsection \ref{PRSIM} the following statement for simple loops.
The argument uses harmonic analysis in $U(N)$ to express means and covariances of $\tr(H_l^n)$ in terms of a discrete Coulomb gas, whose asymptotics as $N\to\infty$ we can compute. 
Write $L_0(\Sbb_T)$ for the set of simple loops in $L(\Sbb_T)$.
For $l\in L_0(\Sbb_T)$, write $l^*$ for the range of $l$.
Then $\Sbb_T\sm l^*$ has two connected components.
We write $a_1(l)$ for the area of the the component on the left of $l$ and $a_2(l)$ for the area of the component on the right.
Then $a_1(l),a_2(l)>0$ and $a_1(l)+a_2(l)=T$.
Set
\begin{equation}\label{PHITD}
\phi_T(n,a_1,a_2)=\frac2{n\pi}\int_0^\infty\cosh\left\{(a_1-a_2)nx/2\right\}\sin\{n\pi\rho_T(x)\}dx.
\end{equation}

\begin{prop}\label{NFL}
For all $n\in\N$, 
$$
\tr(H_l^n)\to\Phi_T(l^n)=\phi_T(n,a_1(l),a_2(l))
$$ 
uniformly in $l\in L_0(\Sbb_T)$ in $L^2(\PP)$ as $N\to\infty$.
\end{prop}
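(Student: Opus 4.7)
The plan is to combine Proposition \ref{Representation simple loops beta ensemble}, which expresses the mean and covariance of $\tr(H_l^n)$ as functionals of a discrete $\beta = 2$ ensemble, with the convergence and concentration theory for such ensembles developed by Guionnet--Ma\"ida \cite{MR2198201}, Johansson \cite{MR1737991} and F\'eral \cite{MR2483725}. By property (b) of the Yang--Mills field applied to the two-face graph bounded by $l$, the density of $H_l$ against Haar on $U(N)$ is proportional to $p_{a_1}(g) p_{a_2}(g)$; expanding the two heat kernels in characters and using orthogonality, Proposition \ref{Representation simple loops beta ensemble} rewrites $\E[\tr(H_l^n)]$ and $\E[|\tr(H_l^n)|^2]$ as sums over signatures $\ell = (\ell_1 > \dots > \ell_N) \in \Z^N$, weighted by a Vandermonde squared and a Gaussian factor proportional to $\exp(-\tfrac{T}{2N} \sum_i \ell_i^2)$. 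The observables become linear statistics in the rescaled coordinates $x_i = \ell_i / N$, twisted by exponentials of the form $e^{\pm (a_1 - a_2) n x / 2}$ that encode the asymmetry between the two face areas.

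\emph{Asymptotic analysis.} The weight just described is a constrained Gaussian discrete $\beta = 2$ ensemble whose large-$N$ rate function is the functional $\cI_T$ of \eqref{SCE}: the quadratic potential comes from the Casimir, the logarithmic repulsion from the Vandermonde squared, and the Lipschitz constraint $\mu([a,b]) \le b - a$ arises from the spacing $\ell_1 > \dots > \ell_N$ which, after rescaling, becomes $x_i - x_{i+1} \ge 1/N$. The cited works then provide (i) convergence in probability of the empirical measure $\mu_N = N^{-1} \sum_i \delta_{x_i}$ to the unique minimizer $\mu_T$ of $\cI_T$ (with density $\rho_T$), and (ii) exponential concentration for Lipschitz linear statistics. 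Applied to the observables from the previous paragraph, (i) identifies $\lim_{N\to\infty} \E[\tr(H_l^n)]$ as an integral of an explicit twisted linear statistic against $\rho_T$, while (ii) yields a $\cO(N^{-2})$ bound on $\mathrm{Var}(\tr(H_l^n))$, delivering the $L^2$ convergence.

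\emph{Identification and uniformity.} The limit so obtained must be shown to equal $\phi_T(n, a_1, a_2)$ as defined in \eqref{PHITD}. This is a direct computation: the factor $\sin\{n\pi \rho_T(x)\}$ arises from the residue structure at the coincidences of the discrete factor $(\ell_i - \ell_j)^{-1}$, via a contour-rearrangement identity, and vanishes trivially on the saturated set $\{\rho_T = 1\}$ where the one-per-integer constraint is bound. Uniformity in $l \in L_0(\Sbb_T)$ reduces to uniformity in the pair $(a_1, a_2)$ over the compact simplex $\{a_1, a_2 \ge 0,\, a_1 + a_2 = T\}$, which follows from joint continuity of the finite-$N$ and limiting expressions in these parameters together with concentration bounds that are uniform in the split.

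\emph{Expected main obstacle.} The crux is the passage through the Douglas--Kazakov transition at $T = \pi^2$: for $T > \pi^2$ the equilibrium density $\rho_T$ saturates its upper bound on the interval $[-\alpha, \alpha]$ of \eqref{DCG}, so the one-per-integer constraint of the ensemble becomes binding in the bulk. Verifying that the convergence and concentration from \cite{MR2198201,MR1737991,MR2483725} apply uniformly across this transition, and that the identification of the limit with \eqref{PHITD} interpolates stably between the subcritical regime described by \eqref{SCD} and the supercritical regime described by \eqref{DCG}, particularly at the band edges $\pm \alpha$, will be the technically demanding point.
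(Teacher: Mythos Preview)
Your high-level plan is right --- Proposition~\ref{Representation simple loops beta ensemble} plus the Guionnet--Ma\"ida LDP and a Johansson-type tail bound are exactly the inputs the paper uses --- but the analytic mechanism you describe has a real gap.

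The observable produced by Proposition~\ref{Representation simple loops beta ensemble} is \emph{not} a linear statistic twisted by an exponential. In the notation of the paper's derivation it is
\[
J(\nu,n,a)=e^{-n^2a/(2N)}\,\frac1N\sum_{j=1}^N e^{-na\nu_j/N}\prod_{i\neq j}\frac{\nu_j+n-\nu_i}{\nu_j-\nu_i},
\]
a sum of terms each carrying a full product over the other particles; equivalently, a double-sum in logarithms with unbounded integrand near coincidences and with sign changes. Concentration for Lipschitz linear statistics from \cite{MR2198201,MR1737991,MR2483725} does not apply to this object as stated, and there is no obvious way to extract an $O(N^{-2})$ variance bound for $\tr(H_l^n)$ along those lines. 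The paper's key device, which you are missing, is to rewrite $J$ as a single contour integral
\[
I_n^a(\lambda)=\frac{e^{-an^2/(2N)}}{2\pi in}\int_\gamma\exp\bigl\{-n(az-G^{N/n}_\lambda(z))\bigr\}\,dz,
\]
so that the dependence on $\lambda$ enters only through the (log-)Stieltjes-type functional $G^{N/n}_\lambda(z)$, which is a bona fide bounded continuous functional of the empirical measure for $z$ on a fixed contour outside the support. The weak convergence $\mu_\Lambda\to\mu_T$ then gives $I_n^a(\Lambda)\to I_n^a$ in probability by bounded convergence on a fixed contour $\gamma_R$, and the Johansson tail bound (Lemma~\ref{bounded support lemma}) is used not for concentration of a linear statistic but to kill the contribution of the event $\{\Lambda^*>R\}$ where the contour would have to grow with $\Lambda$.

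Two further points. First, the $L^2$ convergence is not obtained via a variance bound at all: the paper shows separately that $\E(I_n^a(\Lambda))\to I_n^a$ and $\E(I_n^a(\Lambda)I_n^b(\Lambda))\to I_n^aI_n^b$, whence $\E|\tr(H_l^n)-I_n^a|^2\to0$ by expanding the square. Second, the identification with $\phi_T(n,a_1,a_2)$ and the appearance of $\sin\{n\pi\rho_T(x)\}$ do not come from ``residues at coincidences $(\ell_i-\ell_j)^{-1}$'' but from deforming the limiting contour integral onto the real axis and using the Plemelj--Sokhotskyi boundary values $G_\pm(x)=xT/2\mp i\pi\rho_T(x)$ (Proposition~\ref{formula sin}). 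The Douglas--Kazakov transition you flag as the main obstacle plays no special role in the argument: the contour-integral framework is uniform in $T$, and the saturated set $\{\rho_T=1\}$ contributes nothing precisely because $\sin\{n\pi\cdot1\}=0$.
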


For $n\in\N$, denote by $L_n(\Sbb_T)$ the set of regular loops having at most $n$ self-intersections.
Write $\overline{L_n(\Sbb_T)}$ for the closure of $L_n(\Sbb_T)$ in $L(\Sbb_T)$.
We say that a uniformly continuous function $\Phi$ on $L_n(\Sbb_T)$ is {\em invariant under reduction} if 
$$
\bar\Phi(l_1)=\bar\Phi(l_2)
$$
for all loops $l_1,l_2\in\overline{L(\Sbb_T)}$ with $l_1\sim l_2$, 
where $\bar\Phi$ is the continuous extension of $\Phi$ to $\overline{L_n(\Sbb_T)}$.
For a simple loop $s$ and $k\in\N$, the $k$-fold concatenation $s^k$ is a limit point of $L_n(\Sbb_T)$ if and only if $k\le n+1$.

The next step is the following proposition, which is proved in Subsection \ref{PRREG}.
The argument is based on the Makeenko--Migdal equations for Wilson loops.

\begin{prop}\label{REG}
For all $n\in\N$,
$$
\tr(H_l)\to\Phi_T(l)
$$ 
uniformly in $l\in L_n(\Sbb_T)$ in $L^2(\PP)$ as $N\to\infty$.
Moreover, the restriction of the master field $\Phi_T$ to $L_n(\Sbb_T)$ is the unique function $L_n(\Sbb_T)\to\C$ with the following properties:
it is uniformly continuous, 
invariant under reduction and under area-preserving homeomorphisms, 
satisfies the Makeenko--Migdal equations \eqref{SLMM},
and satisfies, for all simple loops $s$ and all $k\le n+1$,
$$
\bar\Phi_T(s^k)=\phi_T(k,a_1(s),a_2(s)).
$$
\end{prop}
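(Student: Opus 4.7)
The plan is to argue by induction on $n$. The base case $n=0$ is exactly Proposition \ref{NFL}, which in addition determines $\Phi_T$ on the iterated-simple-loop locus of $\overline{L_n(\Sbb_T)}\setminus L_n(\Sbb_T)$ via the explicit formula $\phi_T(k,a_1,a_2)$ for $k\le n+1$. For the inductive step, I assume $L^2(\PP)$-convergence uniformly on $L_{n-1}(\Sbb_T)$, together with the characterization of $\Phi_T$ on that set, and I propagate both to $L_n(\Sbb_T)$ using the Makeenko--Migdal equations.

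The crucial input is that, at finite $N$, the Makeenko--Migdal equations take the form
$$
\left.\frac{d}{dt}\right|_{t=0} \E[\tr(H_{\theta(t,l)})] = \E[\tr(H_{l_v})\tr(H_{\hat l_v})]
$$
for every regular loop $l$, every self-intersection $v$, and every Makeenko--Migdal flow $\theta$ at $(l,v)$. Since $l_v$ and $\hat l_v$ have strictly fewer self-intersections than $l$, the inductive hypothesis forces the right-hand side to converge, as $N\to\infty$, to $\Phi_T(l_v)\Phi_T(\hat l_v)$. To identify the limit of $\E[\tr(H_l)]$ itself, I would then integrate this identity along a curve of MM flows connecting the face-area configuration of $l$ to a boundary configuration in $\overline{L_n(\Sbb_T)}\setminus L_n(\Sbb_T)$, obtained by collapsing a face to zero area so that $l$ degenerates either into a loop in $\overline{L_{n-1}(\Sbb_T)}$ or into an iterated simple loop, where the inductive hypothesis or the explicit formula $\phi_T(k,a_1,a_2)$ supplies the value. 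The main obstacle I expect is combinatorial-geometric: constructing, for every $l$ of a given combinatorial type, a path in the space of admissible face-area configurations that stays within that type and terminates at such a controlled boundary point, and verifying that the concatenation of MM flows along this path is enough to connect every interior point of $L_n(\Sbb_T)$ to a configuration where $\Phi_T$ is already known.

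To upgrade from convergence in mean to convergence in $L^2(\PP)$, I would run the same induction in parallel on covariances. The Makeenko--Migdal equations at finite $N$ also yield an evolution for $\E[\tr(H_l)\tr(H_{l'})]$; subtracting $\E[\tr(H_l)]\E[\tr(H_{l'})]$ produces an equation for $\cov(\tr(H_l),\tr(H_{l'}))$ whose driving term is of order $1/N^2$, together with covariances involving loops with strictly fewer self-intersections. Combined with the inductive hypothesis that $\operatorname{Var}(\tr(H_l))\to 0$ uniformly in $l\in L_{n-1}(\Sbb_T)$ and a Gronwall-type estimate along the deforming path, this gives the desired uniform $L^2$ bound on $L_n(\Sbb_T)$.

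For the characterization part, let $\Psi:L_n(\Sbb_T)\to\C$ satisfy the listed properties. By induction, $\Psi$ agrees with $\Phi_T$ on $L_{n-1}(\Sbb_T)$. For $l\in L_n(\Sbb_T)$ and any MM flow $\theta$ at a self-intersection $v$ of $l$, the Makeenko--Migdal equation and the inductive hypothesis give
$$
\frac{d}{dt}\Psi(\theta(t,l)) = \Psi(l_v)\Psi(\hat l_v) = \Phi_T(l_v)\Phi_T(\hat l_v) = \frac{d}{dt}\Phi_T(\theta(t,l)),
$$
so $\Psi-\Phi_T$ is constant along every MM flow. Following the same deforming path to a boundary configuration and using uniform continuity together with the prescribed values $\phi_T(k,a_1(s),a_2(s))$ on $s^k$, one obtains $\Psi=\Phi_T$ at the boundary, and hence throughout $L_n(\Sbb_T)$.
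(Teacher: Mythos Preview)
Your proposal is correct and follows essentially the same route as the paper: induction on the number of self-intersections, integration of the finite-$N$ Makeenko--Migdal equations along a path in the simplex of face-areas to a boundary point where the value is already known, a parallel argument for $\phi^N_{\mfl,\mfl^{-1}}$ to get $L^2$ convergence, and the same integration argument for uniqueness.

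The one point you flag as an obstacle is exactly where the paper supplies the missing ingredient. The MM vectors $\mu_i$ do \emph{not} span the full tangent space to the simplex $\Delta_\mfl(T)$: the paper shows (via the winding number) that $\spann\{\mu_i:i\in\cI\}$ is precisely the orthogonal complement of $\spann\{1,n_\mfl\}$ in $\R^\cF$. Thus MM flows preserve not only the total area but also the winding-weighted area $\sum_k a_k n_\mfl(k)$, and this is the \emph{only} obstruction. The paper therefore does not collapse a single face; it pushes all the area onto the two faces $k_0,k_*$ of minimal and maximal winding number, in the unique proportions matching $\langle a,n_\mfl\rangle$. At that boundary point the loop reduces to $s^{n_*}$ with $n_*=n_\mfl(k_*)-n_\mfl(k_0)\le n+1$, which is why the hypothesis $\bar\Phi_T(s^k)=\phi_T(k,a_1(s),a_2(s))$ is needed for all $k\le n+1$ and not just $k=n+1$.

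One minor correction on the variance step: no Gronwall estimate is needed. After passing to the limit, the integrated equations for $\phi_\mfl$ and $\phi_{\mfl,\mfl^{-1}}$ show directly that $\frac{d}{dt}\bigl(\phi_{\mfl,\mfl^{-1}}(a(t))-\phi_\mfl(a(t))^2\bigr)=0$, and the difference vanishes at the endpoint $a(1)$ by Proposition~\ref{NFL}.
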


Finally, we extend to all loops of finite length in the following proposition, which combines the statements of Theorems \ref{main} and \ref{EUMF}.
The proof is given in Section \ref{RL}, using approximation by piecewise geodesics, and by adapting some general arguments of L\'evy \cite{MR2667871}.

\begin{prop}\label{ALL}
For all $l\in L(\Sbb_T)$,
$$
\tr(H_l)\to\Phi_T(l)
$$
in probability as $N\to\infty$.
Moreover, the master field $\Phi_T$ is the unique function $L(\Sbb_T)\to\C$ with the following properties:
it is continuous, invariant under reduction, 
invariant under area-preserving homeomorphisms, 
satisfies the Makeenko--Migdal equations \eqref{SLMM} on regular loops, 
and satisfies \eqref{SLD} for simple loops.
\end{prop}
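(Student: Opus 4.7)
The plan is to bootstrap Proposition \ref{REG} from regular loops with bounded self-intersection number to all loops of finite length, using density of regular loops in $L(\Sbb_T)$ combined with a uniform-in-$N$ modulus of continuity for the map $l\mapsto \tr(H_l)$.

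The key technical input is an estimate of the form
$$
\E|\tr(H_l)-\tr(H_{l'})|^2\le \omega(d(l,l'))
$$
for some function $\omega$ with $\omega(\ve)\to 0$ as $\ve\to 0$, valid uniformly in $N$ and in pairs $(l,l')$ of loops with length bounded by a fixed constant. In the planar setting, L\'evy \cite{MR2667871} establishes such a bound using small-ball estimates for the heat kernel on $U(N)$ together with the fact that two paths close in the length metric differ by a loop enclosing small area. I would adapt this strategy to the sphere, taking care that the pinning factor $p_T(1)^{-1}$ appearing in \eqref{Discrete Yang Mills} does not introduce $N$-dependence into the constants, using the sharp partition function asymptotics of L\'evy--Ma\"ida \cite{MR3440793}.

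Next, I would verify that regular loops are dense in $L(\Sbb_T)$ for the length metric. Any loop of finite length can be approximated by piecewise geodesic loops, and a generic small perturbation of the breakpoints renders all self-intersections transverse and of degree $4$, hence the loop regular; the self-intersection number of a piecewise geodesic with $m$ pieces is at most $O(m^2)$, so the approximants lie in some $L_{n(m)}(\Sbb_T)$. Combining the two ingredients: given $l\in L(\Sbb_T)$, pick regular approximants $l_k\to l$ in the length metric. Proposition \ref{REG} gives $\tr(H_{l_k})\to \Phi_T(l_k)$ in $L^2$ for each fixed $k$, while the uniform continuity estimate shows that $(\Phi_T(l_k))_k$ is Cauchy and that $\tr(H_l)$ converges in probability to its limit, which we define to be $\Phi_T(l)$. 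Continuity of $\Phi_T$ is inherited from the uniform modulus $\omega$. For uniqueness, any candidate $\Psi$ satisfying the five listed properties has its restriction to each $L_n(\Sbb_T)$ satisfying the hypotheses of Proposition \ref{REG} (the required values on $s^k$ for $k\le n+1$ are read off from \eqref{SLD} by continuity), so $\Psi=\Phi_T$ on regular loops; density and continuity then force $\Psi=\Phi_T$ everywhere on $L(\Sbb_T)$.

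The main obstacle is the uniform-in-$N$ modulus of continuity. The naive pointwise bounds on the $U(N)$ heat kernel degrade with dimension, so dimension-free small-ball estimates of the type $\PP(\|B_t-1\|\ge \ve)\le C\exp(-c\ve^2/t)$ for unitary Brownian motion are needed, together with careful accounting of how the pinning factor on the sphere redistributes mass between faces after a local perturbation of the loop. A secondary subtlety is that the transversality perturbation producing the regular approximants must preserve the length-metric approximation quantitatively enough that the estimate above can be applied; this is a genericity argument on a compact surface which requires some care, but is essentially standard once the modulus of continuity is in hand.
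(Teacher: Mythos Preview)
Your overall plan---approximate by regular (piecewise geodesic) loops, invoke Proposition \ref{REG}, and pass to the limit using a dimension-free continuity estimate---is exactly the paper's strategy. The uniqueness argument you sketch is also the paper's.

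Where you diverge is in how you propose to obtain the uniform-in-$N$ modulus of continuity, and there you are making the problem harder than it is. You flag the dimension dependence of heat-kernel small-ball bounds and the pinning factor $p_T(1)^{-1}$ as the main obstacles, and propose to handle them via Gaussian tail estimates for unitary Brownian motion together with the L\'evy--Ma\"ida partition-function asymptotics. None of this is needed. The paper works instead with the functional
\[
\Psi_N(\alpha)=\sqrt{1-\E(\tr(H_\alpha))},
\]
which is subadditive under concatenation, invariant under reduction and rerooting, and---crucially---satisfies the \emph{dimension-free} bound $\Psi_N(\alpha)\le K_1\sqrt{a}$ for any simple loop $\alpha$ bounding area $a$. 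This last estimate is an elementary Brownian-bridge fact in the chosen normalization of the metric on $\mfu(N)$; the pinning factor is absorbed automatically and no partition-function analysis enters. From there, a lasso decomposition together with an isoperimetric inequality of the form \eqref{IPJ} yields
\[
\Psi_N(\alpha D_n(\alpha)^{-1})\le K\,\ell(\alpha)^{3/4}\bigl(\ell(\alpha)-\ell(D_n(\alpha))\bigr)^{1/4},
\]
uniformly in $N$, for the explicit dyadic geodesic interpolants $D_n(\alpha)$. Since $\E|\tr(H_l)-\tr(H_{l'})|^2\le 2\Psi_N(l\,l'^{-1})^2$, this is precisely the continuity input you want. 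So your ``main obstacle'' dissolves once you route the estimate through $\Psi_N$ and area (via lasso decomposition) rather than through pointwise heat-kernel bounds; your proposed detour through \cite{MR3440793} would work in principle but is unnecessary and heavier.

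A minor point: the paper uses the explicit dyadic approximants $D_n(l)$ rather than a genericity argument producing regular loops. This sidesteps your ``secondary subtlety'' entirely, since $D_n(l)$ is piecewise geodesic by construction and the estimate above is tailored to it.
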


\subsection{Convergence of spectral measures}\label{SPM}
Let $(H_\g:\g\in P(\Sbb_T))$ be a Yang--Mills holonomy field in $U(N)$.
For $l\in L(\Sbb_T)$, consider the empirical eigenvalue distribution on the unit circle $\U$, given by
$$
\nu_T^N(l)=\frac1N\sum_{i=1}^N\d_{\l_i}
$$
where $\l_1,\dots,\l_N$ are the eigenvalues of $H_l$ enumerated with multiplicity.

\begin{coro}
There is a function $\nu_T:L(\Sbb_T)\to\cM_1(\U)$ such that, for all $l\in L(\Sbb_T)$,
$$
\nu_T^N(l)\to\nu_T(l)
$$
weakly in probability on $\U$ as $N\to\infty$.
Moreover, for all simple loops $l$ and all $n\in\N$,
$$
\int_\U\o^n\nu_T(l)(d\o)=\frac2{n\pi}\int_0^\infty\cosh\left\{(a_1(l)-a_2(l))nx/2\right\}\sin\{n\pi\rho_T(x)\}dx.
$$
Moreover, for $T\in(0,\pi^2]$, all simple loops $l$, and all bounded Borel functions $f$, 
\begin{equation}\label{NUF}
\<f,\nu_T(l)\>=\int_{-\pi}^\pi f(e^{i\th})s_{a_1a_2/T}(\th)d\th
\end{equation}
where $s_t$ is the semi-circle density of variance $t$, given by
\begin{equation}\label{SUF}
s_t(x)=\frac1{2\pi t}\sqrt{4t-x^2},\q |x|\le2\sqrt t.
\end{equation}
\end{coro}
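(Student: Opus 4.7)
The corollary is a spectral reformulation of the theorems already established. The key identity is
$$
\int_\U \omega^n\, \nu_T^N(l)(d\omega) = \tr(H_l^n) = \tr(H_{l^n}), \quad n\in\Z,
$$
where the first equality is the definition of the empirical eigenvalue distribution and the second uses multiplicativity of $H$ together with the fact that $H_l$ is unitary (so $n<0$ reduces to complex conjugation of the case $n>0$). First I would apply Theorem \ref{main} to each loop $l^n$: this gives convergence in probability of each Fourier coefficient of $\nu_T^N(l)$ to $\Phi_T(l^n)$. Since $|\Phi_T(l^n)|\le 1$ and these are limits in probability of Fourier coefficients of probability measures on the compact space $\U$, the Bochner--Herglotz positive-definiteness property is inherited in the limit. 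Hence the sequence $(\Phi_T(l^n))_{n\in\Z}$ is the Fourier transform of a unique probability measure $\nu_T(l)\in\cM_1(\U)$.

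To upgrade moment convergence to weak convergence in probability, I would combine density of the trigonometric polynomials in $C(\U)$ (Stone--Weierstrass) with the fact that $(\nu_T^N(l))_{N\ge 1}$ is automatically tight on the compact set $\U$. For the explicit moment formula on simple loops, one simply reads off Theorem \ref{EUMF}(e) with $l$ replaced by $l^n$ and invokes the identity above.

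The only real work is the identification \eqref{NUF}--\eqref{SUF} in the subcritical regime $T\in(0,\pi^2]$. Here $\rho_T$ is the explicit semicircle density of variance $1/T$ on $[-2/\sqrt T,2/\sqrt T]$, and the claim is that $\nu_T(l)$ is the push-forward under $\th\mapsto e^{i\th}$ of $s_{a_1a_2/T}$. The support condition $2\sqrt{a_1a_2/T}\le\pi$ follows from $4a_1a_2\le(a_1+a_2)^2=T^2$ combined with $T\le\pi^2$, so the identification of densities on $\U$ is unambiguous. Since both sides of \eqref{NUF} are probability measures on $\U$, it suffices to match all their Fourier coefficients, reducing to the identity
$$
\int_{-\pi}^\pi e^{in\th}\, s_{a_1a_2/T}(\th)\,d\th=\phi_T(n,a_1,a_2)
$$
for every $n\in\N$. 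The left-hand side equals $J_1\bigl(2n\sqrt{a_1a_2/T}\bigr)/\bigl(n\sqrt{a_1a_2/T}\bigr)$ via the classical integral representation of $J_1$. The right-hand side reduces to the same value after substituting $x=(2/\sqrt T)\sin\varphi$ in \eqref{PHITD}, expanding $\cosh\{\cdot\}\sin\{\cdot\}$ into complex exponentials, and using the identity $(a_1-a_2)^2+4a_1a_2=T^2$.

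The main obstacle is this final Bessel-function computation, although the identity is standard and is already implicit in L\'evy--Ma\"ida \cite{MR3440793}; alternatively, one may bypass the explicit check by identifying the free unitary Brownian bridge at subcritical time with a semicircular element after a change of variables, as is also established in \cite{MR3440793}. The other steps rely only on soft measure-theoretic arguments on the compact circle and a direct substitution into Theorem \ref{EUMF}(e).
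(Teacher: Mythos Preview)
Your proposal is correct and follows essentially the same route as the paper's proof: reduce to moments via $\int_\U \omega^n\,\nu_T^N(l)(d\omega)=\tr(H_{l^n})$, apply Theorem \ref{main}, use compactness of $\U$ to pass from moment convergence to weak convergence in probability, and read off the simple-loop moment formula from Theorem \ref{EUMF}(e). The only point of divergence is in how you establish the subcritical identity \eqref{NUF}. The paper does not compute both sides as Bessel functions; instead it invokes Proposition \ref{SCC}, where the contour-integral expression for $\phi_T(n,a_1,a_2)$ is transformed by the substitution $w=G_1(z)$ (the Stieltjes transform of the semicircle, which maps $\C\setminus[-2,2]$ conformally to the punctured disc) into a residue computation that yields the Catalan moments of $s_{a_1a_2/T}$ directly via a power-series expansion. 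Your Bessel route is valid in principle, but the trigonometric substitution you sketch leaves a nontrivial double-exponential integral to evaluate; the paper's contour-integral method is cleaner and meshes with the machinery already set up in Section \ref{HADBE}.
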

\begin{proof}
By Theorem \ref{main}, for $l\in L(\Sbb_T)$ and all $n\in\N$, we have
$$
\int_\U\o^n\nu_T^N(l)(d\o)=\tr(H_l^n)=\tr(H_{l^n})\to\Phi_T(l^n)
$$
in probability as $N\to\infty$.
Since $\U$ is compact, by a standard tightness argument, it follows that there exists a probability measure $\nu_T(l)$ on $\U$ such that
$$
\int_\U\o^n\nu_T(l)(d\o)=\Phi_T(l^n)
$$
for all $n\in\N$ and such that $\nu_T^N(l)\to\nu_T(l)$ weakly in probability as $N\to\infty$.
By Theorem \ref{EUMF}, $\Phi_T(l^n)$ is given by \eqref{SLD} for all simple loops $l$.
Finally, we will show in Subsection \ref{EVAL} that, for all $T\in(0,\pi^2]$ and all $n\in\N$,
$$
\Phi_T(l^n)=\int_{-\pi}^\pi e^{in\th}s_{a_1a_2/T}(\th)d\th
$$
so \eqref{NUF} holds for polynomials, and so it holds in general.
\end{proof}
Thus, for $T\in(0,\pi^2]$ and for simple loops $l$, the limiting spectral measure $\nu_T(l)$ has a semi-circle density on $\U$, with
$$
\supp(\nu_T(l))=\{e^{i\th}:|\th|\le2\sqrt{a_1a_2/T}\}.
$$
The maximal support is then $\{e^{i\th}:|\th|\le\sqrt{T}\}$, achieved when $a_1=a_2=T/2$.
Note that, in the critical case $T=\pi^2$, the two endpoints of the maximal support meet at $\th=\pm\pi$.

\subsection{Free unitary Brownian loop}\label{FUBL} 
As a corollary of Theorem \ref{main}, we show that the Brownian loop in $U(N)$ based at $1$ of parameter $T$ converges in non-commutative distribution as $N\to\infty$.   
Moreover, we identify the limiting empirical distribution of eigenvalues at each time $t\in[0,T]$.

Consider the free unital $*$-algebra $\cA_T$ of polynomials over $\C$ in the variables $(X_t:t\in[0,T])$ and their inverses.
Thus, each element $P\in\cA_T$ is a non-commutative polynomial 
$$
P=p(X_t,X_t^{-1}:t\in[0,T])
$$
with coefficients in $\C$,
and $*$ is the conjugate-linear, anti-multiplicative involution on $\cA_T$ such that
$$
X_t^*=X_t^{-1}.
$$

For each $N\in\N$, there exists a Brownian loop $B^N=(B^N_t:t\in[0,T])$ in $U(N)$ based at $1$ of parameter $T$.
Define a random non-negative unit trace%
\footnote{Recall that a linear map $\t$ on a unital $*$-algebra $\cA$ is a {\em non-negative unit trace} if, for all $x,y\in\cA$,
$$
\t(xx^*)\ge0,\q \t(1)=1,\q\t(xy)=\t(yx).
$$
The pair $(\cA,\t)$ is then a {\em non-commutative probability space}.}
on $\cA_T$ by setting 
$$
\t_N(P)=\tr(p(B^N_t,(B^N_t)^{-1}:t\in[0,T])).
$$ 

\begin{thm}\label{FUBLT} 
There is a non-negative unit trace $\t_\infty$ on $\cA_T$ such that, for all $P\in\cA_T$, 
$$
\tau_N(P)\to\t_\infty(P)\q\text{in probability as $N\to\infty$.}
$$
\end{thm}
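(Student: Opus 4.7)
The plan is to realise the Brownian loop $B^N$ as the restriction of a Yang--Mills holonomy field on $\Sbb_T$ to a nested family of simple loops, and then to read off the conclusion from Theorem~\ref{main} and Proposition~\ref{state master field}.

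First I would fix a basepoint $x \in \Sbb_T$ and a family $(l(t) : t \in [0,T])$ of simple loops based at $x$, indexed so that the domain to the left of $l(t)$ has area $t$, as in Subsection~\ref{section embedded bridge}. Starting from a Yang--Mills holonomy field $H = (H_\gamma : \gamma \in P(\Sbb_T))$ in $U(N)$, the process $\beta^N_t := H_{l(t)}$ has a continuous modification which is a Brownian loop in $U(N)$ of parameter $T$ based at $1$, and I may use this modification as $B^N$. Since only finitely many times appear in any given $P$, we have $B^N_t = H_{l(t)}$ almost surely at each such time. Given a word $W = X_{t_1}^{\epsilon_1} \cdots X_{t_k}^{\epsilon_k}$ with $\epsilon_i \in \{\pm 1\}$, the multiplicativity convention $h_{\gamma_1 \gamma_2} = h_{\gamma_2} h_{\gamma_1}$ yields
$$
B^N_{t_1}{}^{\epsilon_1} \cdots B^N_{t_k}{}^{\epsilon_k} = H_{\gamma_W}, \qquad \gamma_W := l(t_k)^{\epsilon_k} l(t_{k-1})^{\epsilon_{k-1}} \cdots l(t_1)^{\epsilon_1} \in L_x(\Sbb_T),
$$
so $\tau_N(W) = \tr(H_{\gamma_W})$. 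By Theorem~\ref{main}, $\tau_N(W) \to \Phi_T(\gamma_W)$ in probability, and I set $\tau_\infty(W) := \Phi_T(\gamma_W)$, extending to $\cA_T$ by linearity. Convergence in probability is preserved under linear combinations, so $\tau_N(P) \to \tau_\infty(P)$ in probability for every $P \in \cA_T$.

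Next I would verify that $\tau_\infty$ is a well-defined non-negative unit trace. Well-definedness reduces to checking that the defining relations $X_t X_t^{-1} = 1 = X_t^{-1} X_t$ are respected; these translate under $W \mapsto \gamma_W$ into simple loop reductions, and $\Phi_T$ is invariant under reduction by Proposition~\ref{state master field}(c). Unitality $\tau_\infty(1) = 1$ is the value of $\Phi_T$ on a constant loop. For the trace property, direct inspection gives $\gamma_{WV} = \gamma_V \gamma_W$ and $\gamma_{VW} = \gamma_W \gamma_V$, so cyclicity (Proposition~\ref{state master field}(b)) yields $\tau_\infty(WV) = \tau_\infty(VW)$. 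For non-negativity, given $P = \sum_i a_i W_i$ with loops $\gamma_i := \gamma_{W_i}$, one checks $\gamma_{W_j^*} = \gamma_j^{-1}$, hence $\gamma_{W_i W_j^*} = \gamma_j^{-1} \gamma_i$, and cyclicity gives $\tau_\infty(W_i W_j^*) = \Phi_T(\gamma_i \gamma_j^{-1})$. Thus
$$
\tau_\infty(PP^*) = \sum_{i,j} a_i \overline{a_j}\, \Phi_T(\gamma_i \gamma_j^{-1}) \geq 0
$$
by the positive-definiteness statement in Proposition~\ref{state master field}(d).

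No serious obstacle arises: the analytic depth is entirely in Theorem~\ref{main}, and the structural properties needed for $\tau_\infty$ to be a bona fide non-negative unit trace are already packaged in Proposition~\ref{state master field}. The only care required is tracking the order-reversal built into the multiplicativity convention, which is what forces $\gamma_W$ to read the loops $l(t_i)^{\epsilon_i}$ in reverse order from the word $W$.
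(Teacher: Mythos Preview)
Your proof is correct and follows essentially the same route as the paper: realise $B^N$ inside a Yang--Mills holonomy field via the nested loops $l(t)$, apply Theorem~\ref{main} to the loop $\gamma_W$ associated to each monomial, and extend linearly. The only difference is cosmetic: the paper establishes that $\tau_\infty$ is a non-negative unit trace in one line by saying it ``inherits'' these properties from the random approximations $\tau_N$ (each $\tau_N$ being such a trace almost surely, with $|\tau_N(P)|$ bounded), whereas you verify each property directly from Proposition~\ref{state master field}. Both arguments are valid; yours is more explicit, and your care with the order-reversal in $\gamma_W$ and with well-definedness under the relation $X_tX_t^{-1}=1$ is accurate.
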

\begin{proof} 
It will suffice to consider the case where $B^N$ is constructed from a Yang--Mills holonomy field $(H_\g:\g\in P(\Sbb_T))$ in $U(N)$, as in Section \ref{section embedded bridge}.
Then, for some $p\in\Sbb_T$ and some family of loops $l(t)\in L(\Sbb_T)$ based at $p$, we have
$$
B^N_t=H_{l(t)}\q\text{almost surely, for all $t\in[0,T]$}.
$$
Consider first the case of a monomial $P=X_{t_1}^{\ve_1}\dots X_{t_n}^{\ve_n}$ with $\ve_1,\dots,\ve_n\in\{-1,1\}$ and set $l_P=l(t_n)^{\ve_n}\dots l(t_1)^{\ve_1}$. 
Then, by Theorem \ref{main}, 
$$
\t_N(P)=\tr((B_{t_1}^N)^{\ve_1}\dots(B_{t_n}^N)^{\ve_n})=\tr(H_{l(t_1)}^{\ve_1}\dots H_{l(t_n)}^{\ve_n})=\tr(H_{l_P})\to\Phi_T(l_P)
$$
in probability as $N\to\infty$.
Define $\t_\infty(P)=\Phi_T(l_P)$ for all monomials $P$ and extend $\t_\infty$ linearly to $\cA_T$.
Then $\tau_N(P)\to\tau_\infty(P)$ in probability as $N\to\infty$, for all $P\in\cA_T$, 
and $\t_\infty$ inherits the property of being a non-negative unit trace from its random approximations $\t_N$.
\end{proof}

Given a non-commutative random process $x=(x_t:t\in[0,T])$ in a non-commutative probability space $(\cA,\t)$, let us say that $x$ is a
{\em free unitary Brownian loop} if, for all $n$, all $t_1,\dots,t_n\in[0,T]$ and all $(y_{t_k},Y_{t_k})\in\{(x_{t_k},X_{t_k}),(x^*_{t_k},X^*_{t_k})\}$, 
$$
\t(y_{t_1}\dots y_{t_n})=\t_\infty(Y_{t_1}\dots Y_{t_n}).
$$
In particular, the canonical process $(X_t:t\in[0,T])$ is a free unitary Brownian loop in $(\cA_T,\t_\infty)$.
We shall see in Section \ref{Prop MF} that, in the subcritical regime $T\le \pi^2$, 
a free unitary Brownian loop $x$ has the same marginal distributions as $e^{ib}$, where $b$ is a free Brownian loop with the same lifetime.
Thus $\nu_t$ is the push-forward of a Wigner law by the exponential mapping to the circle. 
However, we shall also see that the full non-commutative distributions of $x$ and $e^{ib}$ are different.

\subsection{The master field as a holonomy in $U(\infty)$}
For simplicity, we have presented the notion of Yang--Mills holonomy field as a process $(H_\g:\g\in P(\Sbb_T))$ with values in $U(N)$.
However, the property of gauge-invariance allows us to think of it a little more generally, which will help to motivate the main construction of this subsection.
Suppose we are given a family of complex vector spaces $V=(V_x:x\in\Sbb_T)$, each equipped with a Hermitian inner product and having dimension $N$.
Choose\footnote{If $V$ is given the structure of a non-trivial vector bundle, then $s$ will necessarily be a discontinuous section of $V$.}, 
for each $x\in\Sbb_T$, a complex linear isometry $s(x):\C^N\to V_x$.
Given a Yang--Mills holonomy field $(H_\g:\g\in P(\Sbb_T))$ in $U(N)$, for each $\g\in P_{x,y}(\Sbb_T)$, we can define a complex linear isometry $T_\g:V_x\to V_y$ by
$$
T_\g=s(y)H_\g s(x)^{-1}.
$$
Then, by gauge invariance, the law of the process $(T_\g:\g\in P(\Sbb_T))$ does not depend on the choice of the family of isometries $(s(x):x\in\Sbb_T)$.
We call any process with this law a {\em Yang--Mills holonomy field in $V$}.
The original holonomy field $(H_\g:\g\in P(\Sbb_T))$ then corresponds to the case where $V_x=\C^N$ for all $x$.

We now carry out the suggestion of Singer \cite{MR1373007}, to use a variation of the Gelfand-Naimark-Segal construction to obtain from the master field a family of Hilbert spaces $(V_x:x\in\Sbb_T)$, 
equipped with a canonical connection, viewed as a family of unitary transport operators indexed by $P(\Sbb_T)$. 
Fix a reference point $r\in\Sbb_T$ and consider for each $x\in\Sbb_T$ the vector space $\cV_x$ of complex functions on $P_{r,x}(\Sbb_T)$ of finite support.
Thus, each $v\in\cV_x$ has the form
$$
v=\sum_{i=1}^na_i\d_{\g_i}
$$
for some $n\ge0$, with $a_i\in\C$ and $\g_i\in P_{r,x}(\Sbb_T)$ for all $i$.
There is a unique Hermitian form $\<.,.\>$ on $\cV_x$ such that, for all $\g_1,\g_2\in P_{r,x}(\Sbb_T)$,
$$
\<\d_{\g_1},\d_{\g_2}\>=\Phi_T(\g_1\g_2^{-1}).
$$
By Proposition \ref{state master field}, this form is non-negative definite.
For $x,y\in\Sbb_T$ and $\g\in P_{x,y}(\Sbb_T)$, there is a unique complex linear map $\cT_\g:\cV_x\to\cV_y$ such that, for all $\g_1\in P_{r,x}(\Sbb_T)$,
$$
\cT_\g\d_{\g_1}=\d_{\g_1\g}.
$$
Note that, for $\g_1,\g_2\in P_{r,x}(\Sbb_T)$, 
$$
\<\cT_\g\d_{\g_1},\cT_\g\d_{\g_2}\>=\<\d_{\g_1\g},\d_{\g_2\g}\>=\Phi_T(\g_1\g\g^{-1}\g_2^{-1})=\Phi_T(\g_1\g_2^{-1})=\<\d_{\g_1},\d_{\g_2}\>.
$$
It follows that $\cT_\g$ preserves the Hermitian form $\<.,.\>$.

Note that, if $\g_1\sim\g_2$, then $\<\d_{\g_1},\d_{\g_2}\>=1$ and so $\<\d_{\g_1}-\d_{\g_2},\d_{\g_1}-\d_{\g_2}\>=0$.
Set
$$
\cI_x=\{v\in\cV_x:\<v,v\>=0\}
$$
and denote by $V_x$ the complex Hilbert space obtained by completing the quotient space $\cV_x/\cI_x$ with respect to the Hermitian inner product induced by $\<.,.\>$.
Then, for $x,y\in\Sbb_T$ and $\g\in P_{x,y}(\Sbb_T)$, the map $\cT_\g$ induces a Hilbert space isometry $T_\g:V_x\to V_y$.
Moreover, the family of maps $(T_\g:\g\in P(\Sbb_T))$ has the following properties
$$
T_x=1_x,\q T_{\g^{-1}}=(T_\g)^{-1},\q T_{\g_1\g_2}=T_{\g_2}T_{\g_1}.
$$
Here, we write $x$ for the constant loop at $x$, $1_x$ for the identity map on $V_x$, and we assume that $\g_1$ ends where $\g_2$ starts.
For each $x\in\Sbb_T$, given a path $\g\in P_{r,x}(\Sbb_T)$, we can define a state $\t_\g$ on the set of bounded linear operators $\cB(V_x)$ by
$$
\t_\g(A)=\<[\d_\g],A[\d_\g]\>
$$
where $[\d_\g]=\d_\g+\cI_x$.
Then, for all $l\in L_x(\Sbb_T)$,
$$
\t_\g(T_l)=\<\d_\g,\cT_l\d_\g\>=\<\d_\g,\d_{\g l}\>=\Phi_T(\g l^{-1}\g^{-1})=\Phi_T(l).
$$
Recall from Proposition \ref{state master field} that $\Phi_T(x)=1$ and $\Phi_T(l_1l_2)=\Phi_T(l_2l_1)$.
Then, on restricting $\t_\g$ to the von Neumann algebra $\cA_x$ in $\cB(V_x)$ generated by $(T_l:l\in L_x(\Sbb_T))$, we obtain a non-negative unit trace $\t_x$ on $\cA_x$, which does not depend on the choice of path $\g$.

We note some further properties of $(\cA_x,\t_x)$.
First, for all integers $n$, and all $l\in L_x(\Sbb_T)$,
$$
\t_x(T_l^n)=\Phi_T(l^n)=\int_\U\o^n\nu_l(d\o)
$$
where $\nu_l$ is the limit spectral measure obtained in Subsection \ref{SPM}.
So $\nu_l$ is the spectral measure of $T_l$.
Second, since the master field is invariant under area-preserving diffeomorphisms $\Sbb_T$, the choice of such a diffeomorphism $\psi$ gives an isomorphism $(\cA_x,\t_x)\to(\cA_y,\t_y)$ whenever $\psi(x)=y$.

Singer \cite{MR1373007} conjectured, without explicit construction, that the von Neumann algebras $\cA_x$ were factors, that is to say, their centres were trivial\footnote{See for example  \cite{MR1873025}.}.
If this conjecture holds then, since%
\footnote{See for example, Section 8.4 of \cite{MR859186}.} 
the spectral measures $\nu_l$ are absolutely continuous, at least for simple loops, and since $\t_x$ is a finite normalized trace, 
we see that 
$$
\{\t_x(p):p\in\cA_x, p^2=p\}=[0,1]
$$ 
and $\cA_x$ must be of type $\text{II}_1$ and have unique state $\t_x$.

\section{Harmonic analysis in $U(N)$ and a discrete $\b$-ensemble}\label{HADBE}
\subsection{A representation formula}
Let $(H_\g:\g\in P(\Sbb_T))$ be a Yang--Mills holonomy field in $U(N)$.
We obtain in this subsection a formula for the moments of the holonomy $H_l$ of a simple loop $l$ in terms of a certain discrete $\b$-ensemble, with $\b=2$.
Set 
$$
\Z_\sym=\begin{cases}\Z,&\text{ if $N$ is odd},\\\Z+1/2,&\text{ if $N$ is even.}\end{cases}
$$
Consider the discrete $\b$-ensemble $\L$ in $N^{-1}\Z_\sym$ given by
\begin{equation}
\label{def beta ens}
\PP\left(\L=\l\right)\propto\prod_{\substack{j,k=1\\j<k}}^N(\l_j-\l_k)^2\prod_{i=1}^Ne^{-N\l_i^2T/2}
\end{equation}
where $\l$ runs over decreasing sequences $(\l_1,\dots,\l_N)$ in $N^{-1}\Z_\sym$.
For $\a\in\R\sm\{0\}$ and for $z\in\C$ with $|\a||z-\l_j|>1$ for all $j$, set
$$
G^\a_\l(z)=\frac\a N\sum_{j=1}^N\Log\left(1+\frac1{\a(z-\l_j)}\right)
$$ 
where $\Log$ denotes the principal value of the logarithm.
Then, for $a\in(0,T)$, set $I_0^a(\l)=1$ and define for $n\in\Z\sm\{0\}$
\begin{equation*}
\label{INA}
I_n^a(\l)=\frac{e^{-an^2/(2N)}}{2\pi in}\int_\g\exp\{-n(az-G^{N/n}_\l(z))\}dz
\end{equation*}
where $\g$ is any positively oriented simple loop around the set 
$$
[\l_N,\l_1]+\{z\in\C:|z|\le|n|/N\}.
$$

\begin{prop} 
\label{Representation simple loops beta ensemble}
Let $l\in L(\Sbb_T)$ be a simple loop which divides $\Sbb_T$ into components of areas $a$ and $b$.
Then, for all $m,n\in\Z$,
$$
\E(\tr(H_l^{-m})\tr(H_l^n))=\E(I_m^a(\L)I_n^b(\L)).
$$
\end{prop}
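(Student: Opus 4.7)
The plan is to expand the Yang--Mills expectation using harmonic analysis on $U(N)$ and recognise the resulting discrete sum as an expectation against the $\beta$-ensemble $\Lambda$, with the contour integrals $I_n^a$ encoding the action of power-sum traces via residues.

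First, since a simple loop partitions $\Sbb_T$ into two faces of areas $a$ and $b$ and $p_t(g^{-1})=p_t(g)$, the marginal density of $H_l$ is $p_a(g)p_b(g)/p_T(1)$, so
$$
\E\bigl[\tr(H_l^{-m})\tr(H_l^n)\bigr]=\frac{1}{N^2\,p_T(1)}\int_{U(N)}\Tr(g^{-m})\Tr(g^n)\,p_a(g)\,p_b(g)\,dg.
$$
Insert the character expansion $p_t(g)=\sum_\lambda d_\lambda\,\chi_\lambda(g)\,e^{-c_\lambda t/(2N)}$ parametrised by signatures $\lambda$, writing shifted weights $\ell_j=\lambda_j+(N+1)/2-j\in\Z_\sym$. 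With $d_\lambda=\prod_{j<k}(\ell_j-\ell_k)/\prod_{j<k}(\ell_j^0-\ell_k^0)$ and $c_\lambda=\sum_j(\ell_j^2-(\ell_j^0)^2)$ under the normalisation \eqref{metric}, the weight $d_\lambda^2\,e^{-c_\lambda T/(2N)}/p_T(1)$ is, after setting $\lambda_j=\ell_j/N$, precisely the density \eqref{def beta ens} of $\Lambda$. It therefore remains to evaluate the LHS as an expectation against this discrete $\beta$-ensemble.

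Second, apply Weyl integration, split $|\Delta|^2=\Delta\bar\Delta$, and use the Weyl character identity $\chi_\lambda(\operatorname{diag}(e^{i\theta}))\Delta(e^{i\theta})=\det(e^{iq^\lambda_j\theta_k})$ with $q^\lambda_j=\lambda_j+N-j$ to obtain
$$
(p_a\Delta)(p_b\bar\Delta)=\sum_{\lambda,\mu}d_\lambda d_\mu\,e^{-(c_\lambda a+c_\mu b)/(2N)}\det(e^{iq^\lambda_j\theta_k})\det(e^{-iq^\mu_j\theta_k}).
$$
The trace insertions act as free-fermion single-column shifts via
$$
\Tr(g^n)\det(e^{iq_j\theta_k})=\sum_s\det\bigl(e^{i(q+n\delta_s)_j\theta_k}\bigr),
$$
and analogously $\Tr(g^{-m})$ adds $m$ to one entry of $q^\mu$. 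Integration over $\theta\in[0,2\pi)^N$ together with the orthogonality of $\{e^{ip\theta}\}_{p\in\Z}$ forces the two shifted multisets $\{q^\lambda+n\delta_s\}$ and $\{q^\mu+m\delta_{s'}\}$ to agree; after re-sorting they equal a common signature $\nu$, collapsing the double sum to a single sum over $\nu$, weighted by Weyl-group signs from the re-sort.

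Third, for fixed $\nu$ the resulting expression factors into a product of two independent shift sums, and each matches a contour integral. Evaluating $I_n^a(\lambda)$ by residues at $z=\lambda_j$ yields
$$
I_n^a(\lambda)=\frac{e^{-an^2/(2N)}}{N}\sum_j e^{-na\lambda_j}\prod_{k\ne j}\frac{\lambda_j+n/N-\lambda_k}{\lambda_j-\lambda_k},
$$
whose $j$-th term, on setting $\ell_i=N\lambda_i$, is the exact ratio of the $\beta$-ensemble weight at the one-particle-shifted configuration $\ell+n\delta_j$ to the base configuration $\ell$: the Vandermonde factor $\prod_{k\ne j}(\ell_j+n-\ell_k)/(\ell_j-\ell_k)$ accounts for the change of $d_\lambda^2$ and the Gaussian factor $e^{-na\lambda_j-an^2/(2N)}=e^{-a[(\ell_j+n)^2-\ell_j^2]/(2N)}$ accounts for the change of Casimir. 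Thus the $a$-shift sum equals $N\cdot I_n^a(\nu/N)$ and the $b$-shift sum equals $N\cdot I_m^b(\nu/N)$; combining them, the two factors of $N$ cancel the prefactor $1/N^2$, and identifying the remaining weight on $\nu$ as $\PP(\Lambda=\nu/N)$ produces $\E[I_m^a(\Lambda)\,I_n^b(\Lambda)]$.

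The main obstacle is the sign bookkeeping in the third step: after the fermionic shifts, the tuples $q^\lambda+n\delta_s$ are not strictly decreasing, and the Weyl-group signs arising from re-sorting (equivalently, the Murnaghan--Nakayama signs in the character-multiplication rule for $\Tr(g^n)\chi_\lambda$) must combine with the signed Vandermonde ratios to reproduce the unsigned residue sum for $I_n^a$. A secondary subtlety is the pairing of trace exponent to face area; this is determined by the orientation of $l$ and by the symmetry $p_t(g^{-1})=p_t(g)$, which must be tracked to yield the specific pairing $I_m^a\leftrightarrow\tr(H_l^{-m})$ appearing in the proposition.
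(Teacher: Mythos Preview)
Your proposal is correct and follows essentially the same route as the paper: expand both heat kernels in characters, use the action of power-sum traces to shift highest weights, collapse the double sum over $(\lambda,\mu)$ to a single sum over a common shifted configuration $\nu$, and then recognise the resulting one-particle shift sums as the residue expansion of $I_n^a(\nu/N)$. The only stylistic difference is that you go through Weyl integration and the determinantal form $\chi_\lambda\Delta=\det(e^{iq_j\theta_k})$ explicitly, whereas the paper packages the same manipulation as the character product rule $\chi_\lambda(g)\Tr(g^n)=\sum_j\chi_{\lambda+n\omega^j}(g)$ together with the signed orthogonality $\int\chi_{x-\rho}\chi_{y-\rho}^{-1}=\varepsilon(x)\varepsilon(y)\delta_{[x],[y]}$.

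The sign issue you flag is exactly the point the paper's formulation is designed to absorb: since $\chi_{x-\rho}=\varepsilon(x)\chi_{[x]-\rho}$, the Weyl-group sign from re-sorting $\nu+n\omega^{j'}$ is carried by $\varepsilon(\nu+n\omega^{j'})$, and the identity
\[
\chi_{\lambda-\rho}(1)\,\varepsilon(\lambda-m\omega^j)=\chi_{\nu+m\omega^{j'}-\rho}(1)=\chi_{\nu-\rho}(1)\prod_{i\ne j'}\frac{\nu_{j'}+m-\nu_i}{\nu_{j'}-\nu_i}
\]
shows that this sign is precisely the sign of the Vandermonde ratio. So the signed shift sum and the unsigned residue sum for $I_n^a$ really do coincide term by term; your ``main obstacle'' dissolves once you write the dimension formula for the possibly non-dominant weight $\nu+n\omega^{j'}$. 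Your secondary worry about the $(m,a)$ versus $(n,b)$ pairing is also handled in the paper simply by keeping $p_a(g)$ and $p_b(g^{-1})$ separate rather than symmetrising via $p_t(g)=p_t(g^{-1})$; this makes the bookkeeping transparent.
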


To prove these identities, we will use the decomposition of the heat kernel as a sum over the characters of $U(N)$. 
The results we use may be found for example in \cite{MR1920389}.  
For $\l\in(\Z_\sym)^N$, set
$$
\|\l\|^2=\frac1N\sum_{j=1}^N\l_j^2. 
$$
Write $\rho=(\rho_1,\dots,\rho_N)$ for the unique minimizer of $\|.\|$ among decreasing sequences in $(\Z_\sym)^N$,
which is given by
$$
\rho_j=\frac12(N+1)-j.
$$
For $\l\in\Z^N$, there is a unique continuous function $\chi_\l:U(N)\to\C$ given by the Weyl character formula
\begin{equation}
\label{Weyl character formula}
\chi_\l(g)\det(e^{i\th_j\rho_k})_{j,k=1}^N=\det(e^{i\th_j(\l_k+\rho_k)})_{j,k=1}^N,\q g\in U(N)
\end{equation}
where $e^{i\th_1},\dots,e^{i\th_N}$ are the eigenvalues of $g$.  
Then 
$$
(\chi_\l:\l\in\Z^N,\l_1\ge\dots\ge\l_N)
$$ 
is a parametrization of the set of characters of irreducible representations of $U(N)$.
For characters $\chi_\l$ and $\chi_\mu$, we have
\begin{equation}
\label{orthochar}
\int_{U(N)}\chi_\l(g)\overline{\chi_\mu(g)}dg=\int_{U(N)}\chi_\l(g)\chi_\mu(g^{-1})dg=\d_{\l,\mu} 
\end{equation}
and
\begin{equation}
\label{Eigenvalues Laplacian}
\Delta\chi_\l=-(\|\l+\rho\|^2-\|\rho\|^2)\chi_\l.
\end{equation}
Moreover, the heat kernel $(p_t(g):t\in(0,\infty),g\in U(N))$ is given by the following absolutely converging sum over characters
\begin{equation}\label{heat}
p_t(g)=e^{\|\rho\|^2t/2}\sum_\l\chi_\l(1)\chi_\l(g)e^{-\|\l+\rho\|^2t/2}.
\end{equation}
The character values at the identity are given by the Weyl dimension formula
\begin{equation}
\label{dimensionIrrep}
\chi_\l(1)=\prod_{\substack{j,k=1\\j<k}^N}\frac{\l_j+\rho_j-\l_k-\rho_k}{\rho_j-\rho_k}.  
\end{equation}
The change of variable $\mu=\l+\rho$ gives a convenient reparametrization of the set of characters by
$$
W=\{\mu\in(\Z_\sym)^N:\mu_1>\dots>\mu_N\}.
$$
For $x\in(\Z_\sym)^N$ with all components distinct, we will write $[x]$ for the decreasing rearrangement of $x$.
From \eqref{Weyl character formula}, we see that, 
$$
\chi_{x-\rho}=\ve(x)\chi_{[x]-\rho}
$$
where 
$$
\ve(x)=\begin{cases}\sgn(\s),&\text{if $x$ has all components distinct},\\0,&\text{otherwise},\end{cases}
$$
where $\s$ is the unique permutation such that $[x]_j=x_{\s(j)}$ for all $j$.
Then the orthogonality relation (\ref{orthochar}) extends to all $x,y\in(\Z_\sym)^N$ in the form 
\begin{equation}
\label{orthocharsign}
\int_{U(N)}\chi_{x-\rho}(g)\chi_{y-\rho}(g^{-1})dg=\ve(x)\ve(y)\d_{[x],[y]}. 
\end{equation}
To compute the desired moments of holonomy traces, we shall need to take the pointwise product of the trace on the fundamental representation $\C^N$ with the characters.   
A straightforward computation using (\ref{Weyl character formula}) shows that, for all $n\in\Z$,%
\begin{equation}\label{product Trace}
\chi_\l(g)\Tr(g^n)
=\sum_{j=1}^N\chi_{\l+n\o^j}(g).
\end{equation}
where $\o^j$ is the $j$th elementary vector in $\Z^N$.

\begin{proof}[Proof of Proposition \ref{Representation simple loops beta ensemble}]    
From the definition of the Yang--Mills measure, we have
$$
\E(\tr(H_l^{-m})\tr(H_l^{n}))\propto\int_{U(N)}p_a(g)\tr(g^{-m})\tr(g^{n})p_b(g^{-1})dg
$$
where $\propto$ signifies equality up to a constant independent of $m$ and $n$.
We expand the heat kernel in characters to obtain
\begin{align*} 
&\int_{U(N)}p_a(g)\tr(g^{-m})\tr(g^{n})p_b(g^{-1})dg\\ 
&\propto\sum_{\l,\mu\in W}e^{-\|\l\|^2a/2-\|\mu\|^2b/2}\chi_{\l-\rho}(1)\chi_{\mu-\rho}(1)
\int_{U(N)}\chi_{\l-\rho}(g)\tr(g^{-m})\tr(g^{n})\chi_{\mu-\rho}(g^{-1})dg.
\end{align*} 
The interchange of summation and integration here is valid because $a,b>0$ which ensures absolute convergence.
By orthogonality of characters (\ref{orthocharsign}) and the product rule (\ref{product Trace}), for all $\l,\mu\in W$,
\begin{align*}
&\int_{U(N)}\chi_{\l-\rho}(g)\tr(g^{-m})\tr(g^{n})\chi_{\mu-\rho}(g^{-1})dg\\
&\q\q=\frac1{N^2}\sum_{j,k=1}^N\ve(\l-m\o^j)\ve(\mu-n\o^k)\d_{[\l-m\o^j],[\mu-n\o^k]}.
\end{align*}
Now, for $\nu\in W$, we have $[\l-m\o^j]=[\mu-n\o^k]=\nu$ for some $j,k\in\{1,\dots,N\}$
 if and only if $\l=[\nu+m\o^{j'}]$ and $\mu=[\nu+n\o^{k'}]$ for some $j',k'\in\{1,\dots,N\}$, 
and then 
$$
N\|\l\|^2=N\|\nu\|^2+2m\nu_{j'}+m^2,\q
N\|\mu\|^2=N\|\nu\|^2+2n\nu_{k'}+n^2
$$
and
$$
\ve(\l-m\o^j)=\ve(\nu+m\o^{j'}),\q
\ve(\mu-m\o^k)=\ve(\nu+n\o^{k'})
$$
so, using the dimension formula \eqref{dimensionIrrep},
\begin{align*}
\chi_{\l-\rho}(1)\ve(\l-m\o^j)&=\chi_{\nu+m\o^{j'}-\rho}(1)=\chi_{\nu-\rho}(1)\prod_{i\not=j}\frac{\nu_j+m-\nu_i}{\nu_j-\nu_i},\\
\chi_{\mu-\rho}(1)\ve(\mu-n\o^k)&=\chi_{\nu+n\o^{k'}-\rho}(1)=\chi_{\nu-\rho}(1)\prod_{i\not=k}\frac{\nu_k+n-\nu_i}{\nu_k-\nu_i}.
\end{align*}
Hence
\begin{align*}
&\int_{U(N)}p_a(g)\tr(g^{-m})\tr(g^{n})p_b(g^{-1})dg\\
&\q\q\propto\sum_{\nu\in W}\prod^N_{\substack{j,k=1\\j<k}}(\nu_j-\nu_k)^2e^{-\|\nu\|^2T/2}J(\nu,m,a)J(\nu,n,b)
\end{align*}
where 
$$
J(\nu,m,a)=e^{-m^2a/(2N)}\frac1N\sum_{j=1}^Ne^{-ma\nu_j/N}\prod_{i\not=j}\frac{\nu_j+m-\nu_i}{\nu_j-\nu_i}.
$$
Note that $J(\nu,0,a)=1=I^a_0(\nu/N)$ and, for $|m|\ge1$,
\begin{align*}
J(\nu,m,a)
&=\frac{e^{-m^2a/(2N)}}{2\pi imN}\int_{N\g(\nu)}\prod_{j=1}^N\left(1+\frac{m}{z-\nu_j}\right)e^{-maz/N}dz\\
&=\frac{e^{-m^2a/(2N)}}{2\pi im}\int_{\g(\nu)}\exp\{-m(az-G_{\nu/N}^{N/m}(z))\}dz=I_m^a(\nu/N).
\end{align*}   
So we obtain
\begin{align*}
\E(\tr(H_l^{-m})\tr(H_l^{n}))
&\propto\sum_{\nu\in W}\prod_{\substack{j,k=1\\j<k}}^N(\nu_j-\nu_k)^2e^{-\|\nu\|^2T/2}I_m^a(\nu/N)I_{n}^b(\nu/N)\\
&\propto\sum_{N\l\in W}\prod_{\substack{j,k=1\\j<k}}^N(\l_j-\l_k)^2\prod_{i=1}^Ne^{-N\l^2_iT/2}I_m^a(\l)I_{n}^b(\l)\\
&\propto\E(I^a_m(\L)I_{n}^b(\L)).
\end{align*}
Since the identity $\E(\tr(H_l^{-m})\tr(H_l^{n}))=\E(I^a_m(\L)I_{n}^b(\L))$ holds for $m=n=0$, it therefore holds for all $m$ and $n$.
\end{proof}
The first part of the above proof follows ideas from the physics literature \cite{MR1262293,MR1297298}.
The use of contour integrals in writing the function $J$ and in the formulation of Proposition \ref{Representation simple loops beta ensemble}
is new and provides us with a route to make rigorous the asymptotics performed in \cite{MR1262293,MR1297298}.
 
\subsection{Concentration for the discrete $\b$-ensemble and tightness of the support}
We shall need two facts about the discrete $\b$-ensemble $\L$ defined in equation \eqref{def beta ens}.
Recall from \eqref{SCE} the functional
$$
\cI_T(\mu)=\int_{\R^2}\left\{\tfrac12(x^2+y^2)T-2\log|x-y|\right\}\mu(dx)\mu(dy)
$$
defined for probability measures $\mu$ on $\R$ such that $\mu([a,b])\le b-a$ for all intervals $[a,b]$.
We extend $\cI_T$ to $\cM_1(\R)$ by setting $\cI_T(\mu)=\infty$ if $\mu$ does not satisfy this constraint.
Guionnet and Ma{\"{\i}}da \cite{MR2198201} showed the following large deviation principle.

\begin{thm}\label{LDP MaidaGuionnet} 
The laws of the normalized empirical distributions
$$
\mu_\L=\frac1N\sum_{i=1}^N\d_{\L_i}
$$
satisfy a large deviation principle on $\cM_1(\R)$ with rate function $\cI_T$ and speed $N^2$.
\end{thm}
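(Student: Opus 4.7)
The plan is to follow the Ben~Arous--Guionnet Coulomb gas strategy, adapted to the discrete lattice setting as in Guionnet--Ma\"{\i}da~\cite{MR2198201}. Writing $\mu_\lambda=N^{-1}\sum_i\delta_{\lambda_i}$ and setting
\[
L_N(\lambda)=T\int x^2\,\mu_\lambda(dx)-2\iint_{x\neq y}\log|x-y|\,\mu_\lambda(dx)\mu_\lambda(dy),
\]
a direct computation rewrites the joint law as $\PP(\Lambda=\lambda)=Z_N^{-1}\exp(-N^2 L_N(\lambda)/2)$, where the off-diagonal restriction makes $L_N$ finite on discrete configurations. The crucial structural observation is that the constraint $\lambda_1>\dots>\lambda_N$ in $N^{-1}\Z_{\sym}$ forces $\mu_\lambda([a,b])\le b-a+1/N$ for every interval $[a,b]$, so any weak limit point of $\mu_\Lambda$ automatically satisfies the density bound built into the definition of $\cI_T$. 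This is the mechanism through which the rate function turns out to be the \emph{constrained} Coulomb energy rather than its unconstrained counterpart.

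For the upper bound on a closed set $F\subset\cM_1(\R)$, I would regularize the repulsion by replacing $\log|x-y|$ with $\log\max(|x-y|,\delta)$, apply a finite covering of $F$ by weak-topology neighborhoods together with a union bound, and then send $\delta\to 0$; the short-range contribution removed at scale $\delta$ is of order $N^{-1}\log(1/\delta)$ after normalization, hence negligible at speed $N^2$. Exponential tightness follows from the Gaussian factor $\prod_i e^{-N\lambda_i^2 T/2}$, which makes the tails of $\mu_\Lambda$ super-exponentially small at speed $N^2$.

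For the lower bound at $\mu$ with $\cI_T(\mu)<\infty$, I would construct an explicit near-optimal configuration by setting $\lambda_i^*$ to be the nearest lattice point in $N^{-1}\Z_{\sym}$ to the $(i-1/2)/N$-quantile of $\mu$. The density bound $\mu'\le 1$ guarantees that these points are distinct with minimum spacing at least $1/N$, and a Riemann-sum estimate gives $L_N(\lambda^*)\to\cI_T(\mu)$. Counting configurations in a combinatorial neighborhood of $\lambda^*$ of cardinality $e^{o(N^2)}$ then yields enough configurations whose empirical measure lies in a prescribed weak neighborhood of $\mu$, delivering the lower bound with the correct exponential rate. Matching this against the upper bound at the unique minimizer $\mu_T$ identifies the normalization as $N^{-2}\log Z_N\to -\cI_T(\mu_T)/2$.

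The main obstacle is the quantitative handling of the logarithmic singularity in the \emph{saturated regime} $\rho_T\equiv 1$ on an interval (which arises for $T>\pi^2$): there the optimal discretization $\lambda^*$ is nearly equispaced at the lattice scale $1/N$, so the short-range log-energy accumulates its maximal contribution, and one must verify that the continuous functional $\cI_T$ still captures this contribution at leading order. This comes down to the classical asymptotics $\sum_{k=1}^n\log k=n\log n-n+O(\log n)$, which absorbs the discrete correction into the rate function without altering the leading $N^2$ term.
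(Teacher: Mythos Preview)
The paper does not prove this theorem at all: it is quoted as a result of Guionnet and Ma\"{\i}da \cite{MR2198201} and used as a black box. Your sketch is a faithful outline of the Ben~Arous--Guionnet Coulomb-gas argument in the discrete form that Guionnet--Ma\"{\i}da carry out, so in substance you are reproducing the cited proof rather than offering an alternative.

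A couple of points in your sketch would need sharpening if this were to become a self-contained proof. First, the rate function as stated in the paper is $\cI_T$ itself, not $\cI_T-\inf\cI_T$; strictly speaking a rate function has infimum zero, so one should either normalize or note that the LDP statement is understood modulo this additive constant (which is absorbed in the free-energy limit $N^{-2}\log Z_N$). Second, your lower-bound construction via rounding quantiles to the lattice is not quite robust when $\mu$ has density exactly equal to $1$ on an interval: the quantiles are then spaced exactly $1/N$ apart, and rounding to $N^{-1}\Z_{\sym}$ can produce ties unless the alignment happens to be favourable. The standard remedy is to first approximate $\mu$ in the weak topology by a measure $\mu_\varepsilon$ with density strictly below $1$ and $\cI_T(\mu_\varepsilon)\to\cI_T(\mu)$, prove the lower bound for $\mu_\varepsilon$, and pass to the limit; your final paragraph acknowledges the difficulty but does not quite name this approximation step.
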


We need also a tightness result for the positions $\L_N$ and $\L_1$ of the leftmost and rightmost particles, which is obtained by a variation on ideas of Johansson \cite{MR1737991}.
See also F\'eral \cite{MR2483725}.

\begin{lem}\label{bounded support lemma}  
Set
$$
\L^*=\max\{|\L_1|,|\L_N|\}.
$$
For all $p\in[0,\infty)$, there are constants $C,R<\infty$ depending only on $p$ and $T$ such that
$$
\E\left(e^{p\L^*}1_{\{\L^*>R\}}\right)\le Ce^{-N}.   
$$
\end{lem}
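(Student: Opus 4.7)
The plan is to prove a Gaussian tail bound $\PP(\Lambda_1\ge y,\,\Lambda_1\ge-\Lambda_N)\le Ce^{-cNy^2}$, uniform for $y\ge R_0(T)$, and then integrate against $e^{py}$. The density \eqref{def beta ens} is invariant under the involution $(\lambda_1,\dots,\lambda_N)\mapsto(-\lambda_N,\dots,-\lambda_1)$, so $(\Lambda_1,-\Lambda_N)$ and $(-\Lambda_N,\Lambda_1)$ share a distribution. This yields
$$\E(e^{p\Lambda^*}1_{\{\Lambda^*>R\}})\le 2\,\E(e^{p\Lambda_1}1_{\{\Lambda_1>R,\,\Lambda_1\ge-\Lambda_N\}}),$$
and on the restricted event $|\lambda_k|\le\Lambda_1$ for all $k$.

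For the tail bound, write $\PP(\Lambda_1=y,\,\Lambda_1\ge-\Lambda_N)=Z_N^{-1}e^{-NTy^2/2}P_N(y)$ where $P_N(y)=\sum_{\lambda'}\prod_{k\ge 2}(y-\lambda_k)^2\,w'(\lambda')$ is summed over decreasing $(N-1)$-tuples $\lambda'=(\lambda_2,\dots,\lambda_N)\subset N^{-1}\Z_\sym\cap[-y,y)$ with the standard Vandermonde-times-Gaussian weight $w'$. I replace the extreme particle $\lambda_1=y$ by a bulk lattice point $y_0=y_0(\lambda')\in[-2,2]\cap N^{-1}\Z_\sym$ disjoint from the $\lambda_k$, forming a reference $N$-configuration $\lambda^*=(\lambda_2,\dots,\lambda_N,y_0)$. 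The weight ratio is
$$\frac{w(y,\lambda')}{w(\lambda^*)}=\prod_{k\ge 2}\Bigl(\frac{y-\lambda_k}{y_0-\lambda_k}\Bigr)^2 e^{-NT(y^2-y_0^2)/2}\le c_0^{-N}(2y)^{2(N-1)}e^{-NT(y^2-4)/2},$$
using $|y-\lambda_k|\le 2y$ and the denominator lower bound $\prod_{k\ge 2}(y_0-\lambda_k)^2\ge c_0^N$. The latter is obtained by averaging: for the monic polynomial $Q(x)=\prod_{k\ge 2}(x-\lambda_k)$ of degree $N-1$, the pointwise inequality $\tfrac14\int_{-2}^2\log|x-\lambda|\,dx\ge\log 2-1$, valid for every $\lambda\in\R$, gives $\tfrac14\int_{-2}^2\log|Q|\,dx\ge(N-1)(\log 2-1)$; a Riemann-sum version of this estimate produces a lattice $y_0\in[-2,2]\cap N^{-1}\Z_\sym$, distinct from the $\lambda_k$, realising the bound up to an absolute multiplicative constant. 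The map $\lambda'\mapsto\lambda^*(\lambda')$ is at most $N$-to-$1$, since $y_0$ can be recovered as one of the $N$ entries of $\lambda^*$, so $\sum_{\lambda'}w(\lambda^*)\le NZ_N$, and
$$\PP(\Lambda_1=y,\,\Lambda_1\ge-\Lambda_N)\le CN(2y)^{2(N-1)}c_0^{-N}e^{-NTy^2/2+2NT}\le Ce^{-NTy^2/4}$$
for $y\ge R_0$ with $R_0=R_0(T)$ a constant chosen large enough that the quadratic $NTy^2/4$ beats the polynomial and $e^{O(N)}$ prefactors.

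Summation over $y>R$ in the lattice then yields
$$\E(e^{p\Lambda_1}1_{\{\Lambda_1>R,\,\Lambda_1\ge-\Lambda_N\}})\le C\sum_{y>R,\,y\in N^{-1}\Z_\sym}e^{py-NTy^2/4}\le C'Ne^{-NTR^2/8},$$
and taking $R$ a sufficiently large constant depending only on $p$ and $T$ gives the required bound $Ce^{-N}$. The main technical obstacle is securing the uniform lower bound $\prod(y_0-\lambda_k)^2\ge c_0^N$: a naive lattice $y_0$ could lie within $O(1/N)$ of some $\lambda_k$, sending the product to $O(N^{-2(N-1)})$. The averaging (equivalently Chebyshev-extremum) argument on a fixed bulk window restores the uniform exponential cushion $c_0^N$, which is then absorbed by the Gaussian prefactor once $y\ge R_0$; with this in hand, the remainder is routine Gaussian summation.
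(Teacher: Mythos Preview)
Your approach is correct and genuinely different from the paper's. The paper separates the rightmost particle $s$ from the remaining configuration $\lambda'$ via the elementary decoupling $(s-x)^2\le(1+s^2)(1+x^2)$, which gives
\[
\E\bigl(e^{p\L_N}1_{\{\L_N>R\}}\bigr)\le\frac{Z_{N-1}}{Z_N}\,\E\Bigl(\exp\Bigl\{\int\log(1+x^2)\,M_{N-1}(dx)\Bigr\}\Bigr)\sum_{s>R}(1+s^2)^Ne^{ps-NTs^2/2},
\]
and then invokes Johansson's lemmas to bound $Z_{N-1}/Z_N\le ce^{cN}$ and the exponential moment by $ce^{cN}$. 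Your particle-swap argument is more self-contained: by relocating the extreme particle to a well-chosen bulk lattice point $y_0$ you compare directly against $Z_N$ itself, avoiding any appeal to Johansson. The cost is the averaging step securing $\prod_{k\ge2}(y_0-\lambda_k)^2\ge c_0^N$; your sketch via the integral bound $\tfrac14\int_{-2}^2\log|x-\lambda|\,dx\ge\log2-1$ is the right idea, but the passage to a discrete $y_0$ is not a routine Riemann sum because $\log|Q|$ has singularities at the $\lambda_k$ and up to $N-1$ of the roughly $4N$ lattice points in $[-2,2]$ must be excluded. One clean way to close this is to bound the average of $\log|Q|$ over the remaining $\ge3N$ lattice points: summing $\sum_{y_0\in L_0,\,y_0\ne\lambda_k}\log|y_0-\lambda_k|\ge4N(\log2-1)-O(\log N)$ over $k$, then controlling the correction from the excluded points (each term $\le\log6$ for nearby pairs, with far $\lambda_k$ contributing positively to both sides) yields $\max_{y_0}\log|Q(y_0)|\ge-CN$ with $C$ absolute. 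With that in hand, your Gaussian summation is fine.
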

\begin{proof}  
It will be convenient in this proof to switch our convention so that we label the particle positions in increasing order, so $\L_N$ is the position of the rightmost particle.
Then, by symmetry, it will suffice to show that, for all $p\in[0,\infty)$, there are constants $C,R<\infty$ depending only on $p$ and $T$ such that
$$
\E\left(e^{p\L_N}1_{\{\L_N>R\}}\right)\le Ce^{-N}.   
$$
Fix $N$ and, for $M=N-1$ and $M=N$, set
$$
Z_M=\sum_\l\prod_{\substack{j,k=1\\j<k}}^M(\l_j-\l_k)^2\prod_{i=1}^Me^{-N\l_i^2T/2}
$$
where the sum is taken over the set $S_M$ of increasing sequences $\l=(\l_1,\dots,\l_M)$ in $N^{-1}\Z_\sym$.
Then
\begin{align*}
&\E\left(e^{p\L_N}1_{\{\L_N>R\}}\right)\\
&=\frac1{Z_N}\sum_{\l_N}\sum_\l e^{p\l_N}1_{\{\l_N>R\vee\l_{N-1}\}}\prod_{\substack{j,k=1\\j<k}}^N(\l_j-\l_k)^2\prod_{i=1}^Ne^{-N\l_i^2T/2}\\
&\le\frac1{Z_N}\sum_se^{ps-Ns^2T/2}1_{\{s>R\}}\sum_\l\prod_{\substack{j,k=1\\j<k}}^{N-1}(\l_j-\l_k)^2\prod_{i=1}^{N-1}(s-\l_i)^2e^{-N\l_i^2T/2}\\
&=\frac{Z_{N-1}}{Z_N}\sum_se^{ps-Ns^2T/2}1_{\{s>R\}}\E\left(\exp\left\{\int_\R\log((s-x)^2)M_{N-1}(dx)\right\}\right)
\end{align*}
where $\l_N$ and $s$ are summed over $N^{-1}\Z_\sym$ and $\l=(\l_1,\dots,\l_{N-1})$ is summed over $S_{N-1}$, 
where $M_{N-1}=\sum_{i=1}^{N-1}\d_{\L_i}$
with $\L=(\L_1,\dots,\L_{N-1})$ a random variable in $S_{N-1}$ having distribution
$$
\PP(\L=\l)=\frac1{Z_{N-1}}\prod_{\substack{j,k=1\\j<k}}^{N-1}(\l_j-\l_k)^2\prod_{i=1}^{N-1}e^{-N\l_i^2T/2}.
$$
We use the inequality $(s-x)^2\le(1+s^2)(1+x^2)$ to see that
$$
\int_\R\log((s-x)^2)M_{N-1}(dx)\le(N-1)\log(1+s^2)+\int_\R\log(1+x^2)M_{N-1}(dx).
$$
Hence we have
\begin{align*}
&\E\left(e^{p\L_N}1_{\{\L_N>R\}}\right)\\
&=\frac{Z_{N-1}}{Z_N}\E\left(\exp\left\{\int_\R\log(1+x^2)M_{N-1}(dx)\right\}\right)\sum_s(1+s^2)^Ne^{ps-Ns^2T/2}1_{\{s>R\}}.
\end{align*}
Now by a straightforward modification of the arguments in \cite[Lemmas 4.1 and 4.5]{MR1737991}, there is a constant $c<\infty$, depending only on $T$, such that
$$
\frac{Z_{N-1}}{Z_N}\le ce^{cN},\q\E\left(\exp\left\{\int_\R\log(1+x^2)M_{N-1}(dx)\right\}\right)\le ce^{cN}.
$$
On the other hand, there exist $C,R<\infty$, depending only on $p,c$ and $T$ such that
$$
\sum_s(1+s^2)^Ne^{ps-Ns^2T/2}1_{\{s>R\}}\le Cc^{-2}e^{-(2c+1)N}
$$
for all $N$.
The claim follows.
\end{proof}

\subsection{Evaluation of some contour integrals}\label{EVAL}
In passing from the limit particle density $\rho_T$ for the $\b$-ensemble to the evaluation of the master field on simple loops,
we will need to evaluate certain contour integrals expressed in terms of the Stieltjes transform
$$
G_T(z)=\int_\R\frac{\rho_T(x)dx}{z-x}.
$$
The following calculation is taken from \cite{MR1262293,MR1297298}. 
\begin{prop}\label{SCC}
Let $T\in(0,\pi^2]$ and let $a,b\in(0,T)$ with $a+b=T$.
Let $\g$ be a positively oriented closed curve around the set $[-2/\sqrt T,2/\sqrt T]$.
Then, for all $n\in(0,\infty)$, 
$$
\frac1{2\pi in}\int_\g\exp\{-n(az-G_T(z))\}dz=\int_\R e^{inx}s_{a(T-a)/T}(x)dx
$$
where $s_t$ is the semi-circle density \eqref{SUF} of variance $t$.
\end{prop}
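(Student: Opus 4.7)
The plan is to compute the contour integral explicitly by first writing $G_T$ in closed form and then reducing to a residue calculation via a Joukowski substitution. In the subcritical regime $T\le\pi^2$, the density $\rho_T$ is the semi-circle of variance $1/T$, whose Stieltjes transform is
$$G_T(z)=\frac T2\left(z-\sqrt{z^2-4/T}\right),$$
with the branch of $\sqrt{z^2-4/T}$ chosen to be analytic on $\C\sm[-2/\sqrt T,2/\sqrt T]$ and asymptotic to $z$ at infinity (so that $G_T(z)\sim1/z$). This is standard and can be verified from the functional equation $T^{-1}G_T(z)^2-zG_T(z)+1=0$.

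Next, I would substitute $z=T^{-1/2}(w+1/w)$, the Joukowski map, which is a conformal isomorphism from $\{|w|>1\}$ onto $\C\sm[-2/\sqrt T,2/\sqrt T]$ with inverse chosen so that $w\to\infty$ when $z\to\infty$. A direct computation gives $z^2-4/T=T^{-1}(w-1/w)^2$, and hence $\sqrt{z^2-4/T}=T^{-1/2}(w-1/w)$, so
$$G_T(z)=\sqrt T/w,\qquad az-G_T(z)=\frac{1}{\sqrt T}\Bigl(aw-\frac{b}{w}\Bigr),\qquad dz=\frac{1}{\sqrt T}\Bigl(1-\frac1{w^2}\Bigr)dw,$$
where $b=T-a$. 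The contour $\g$ pulls back to any circle $|w|=r$ with $r>1$, traversed positively. Setting $\a=na/\sqrt T$ and $\b=nb/\sqrt T$, the integral becomes
$$\frac1{2\pi in\sqrt T}\oint_{|w|=r}\Bigl(1-\frac1{w^2}\Bigr)\exp\Bigl(-\a w+\frac\b w\Bigr)dw.$$

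The integral is now the residue at $w=0$, i.e.\ the coefficient of $w^{-1}$ in the Laurent series of the integrand. Multiplying the elementary series $e^{-\a w}=\sum_m(-\a w)^m/m!$ and $e^{\b/w}=\sum_n(\b/w)^n/n!$ (the double sum converges absolutely on $|w|=r$, justifying term-by-term manipulation) and comparing with the series $J_k(2x)=\sum_{n\ge0}(-1)^nx^{2n+k}/(n!(n+k)!)$ yields
$$\exp\Bigl(-\a w+\frac\b w\Bigr)=\sum_{k\in\Z}c_kw^k,\qquad c_k=(-1)^{\max(k,0)}\left(\frac{\a}{\b}\right)^{k/2}J_{|k|}(2\sqrt{\a\b}).$$
Extracting the coefficient of $w^{-1}$ from $(1-w^{-2})\sum_kc_kw^k$ gives $c_{-1}-c_1=[(\a+\b)/\sqrt{\a\b}]J_1(2\sqrt{\a\b})$, since $J_1$ is odd.

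Finally, $\a+\b=n(a+b)/\sqrt T=n\sqrt T$ and $\sqrt{\a\b}=n\sqrt{ab/T}$, so the integral equals $J_1(2n\sqrt{ab/T})/(n\sqrt{ab/T})$. A standard calculation (change of variable $x=2\sqrt{ab/T}\sin\th$ and the integral representation of $J_1$) identifies this with $\int_\R e^{inx}s_{ab/T}(x)dx$, completing the proof. The only delicate point is the branch and contour bookkeeping under the Joukowski substitution: one must verify that the chosen branch of $\sqrt{z^2-4/T}$ corresponds to $T^{-1/2}(w-1/w)$ with $|w|>1$ (check at $z\to\infty$, i.e.\ $w\to\infty$) and that the orientation is preserved (check on the ray $z>2/\sqrt T$, $w>1$). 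Everything else is routine manipulation of power series and Bessel identities.
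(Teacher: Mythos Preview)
Your proof is correct and follows essentially the same route as the paper's. Both arguments make the Joukowski substitution to reduce the contour integral to a Laurent-coefficient extraction; the paper first rescales to $T=1$ and substitutes $w=G_1(z)$ (landing in the punctured unit disc, so the new contour is negatively oriented), while you keep general $T$ and use the exterior branch $|w|>1$. The paper then expands $\exp\{n(bw-a/w)\}(1-w^{-2})$ directly and reads off the series $\sum_{m\ge0}(-n^2ab)^m/(m!(m+1)!)$, identifying it with the semi-circle characteristic function via the Catalan moments; you package the same sum as $J_1(2n\sqrt{ab/T})/(n\sqrt{ab/T})$ before making the identification. One minor remark: your appeal to ``$J_1$ is odd'' in computing $c_{-1}-c_1$ is unnecessary---the sign in $c_1$ comes from the factor $(-1)^{\max(k,0)}$, not from any parity of $J_1$---but the computation itself is right.
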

\begin{proof} 
Since $T\in(0,\pi^2]$, we have $\rho_T=s_{1/T}$.
Then $\rho_T(x)=\sqrt{T}\rho_1(\sqrt Tx)$ so, by a scaling argument, it will suffice to consider the case $T=1$.
A standard calculation of the Steiltjes transform gives
$$
G_1(z)=\int_\R\frac{\rho_1(x)dx}{z-x}=\frac{z-\sqrt{z^2-4}}2.
$$
Note that $G_1$ maps $\C\sm[-2,2]$ conformally to the punctured unit disc $\DD\sm\{0\}$ with inverse $z+1/z$.
Also, $G_1(\g)$ is a negatively oriented closed curve around $\{0\}$.
Write $b=1-a$. 
We make the change of variable $w=G_1(z)$ to obtain
\begin{align*}
&\frac1{2\pi in}\int_{\g}\exp\{-n(az-G_1(z))\}dz
=\frac1{2\pi in}\int_{G_1(\g)}\exp\{n(bw-aw^{-1})\}(1-w^{-2})dw\\
&\q\q=\frac1{2\pi n}\int_0^{2\pi}\exp\{n(be^{-i\th}-ae^{i\th})\}(e^{i\th}-e^{-i\th})d\th\\
&\q\q=\frac1{2\pi n}\sum_{k=0}^\infty\frac{n^k}{k!}\sum_{j=0}^k\binom{k}j\int_0^{2\pi}(be^{-i\th})^j(-ae^{i\th})^{k-j}(e^{i\th}-e^{-i\th})d\th\\
&\q\q=\sum_{m=0}^\infty\frac{(-n^2ab)^m}{m!(m+1)!}=\int_\R e^{inx}s_{ab}(x)dx 
\end{align*}
where we used in the last equality the moment formula
$$
\int_\R x^{2m}s_t(x)dx=\frac{t^{2m}}{m!(m+1)!}.
$$
\end{proof}

More generally, for all $T\in(0,\infty)$, the following is obtained in \cite[equation (4.12)]{MR3440793}
\begin{equation}
G_T(z)=\frac{zT}2-\frac2{\b z}\sqrt{(z^2-\a^2)(z^2-\b^2)}\int_0^1\frac{ds}{(1-\a^2s^2/z^2)\sqrt{(1-s^2)(1- k^2s^2)}}\label{Stieltjes Coulomb}
\end{equation}
where $k=\a/\b\in(0,1)$.
Moreover, for $|x|\in[\a,\b]$, in the limit $z\to x$ with $z\not\in\R$, we have
\begin{equation}\label{conjugate density}
\re(G_T(z))\to\mathrm{PV}\int\frac{\rho_T(y)dy}{x-y}=\frac{xT}2.
\end{equation}

\begin{prop} 
\label{formula sin} 
Let $T\in(0,\infty)$ and let $a,b\in(0,T)$ with $a+b=T$.
Let $\g$ be a positively oriented closed curve around the set $[-\b,\b]$.
Then, for all $n\in\N$, 
$$
\frac1{2\pi in}\int_\g\exp\{-n(az-G_T(z))\}dz=\frac2{n\pi}\int_0^\infty\cosh\left\{(a-b)nx/2\right\}\sin\{n\pi\rho_T(x)\}dx.
$$
\end{prop}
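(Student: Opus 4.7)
The plan is to collapse the contour $\gamma$ onto the support $[-\b,\b]$ of $\rho_T$ and use the Plemelj--Sokhotski jump formula for the Stieltjes transform to express the result as an ordinary real integral.

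First, I would observe that since $\rho_T$ is supported on $[-\b,\b]$, the Stieltjes transform $G_T$ is holomorphic on $\C\sm[-\b,\b]$. Hence the integrand $\exp\{-n(az-G_T(z))\}$ is holomorphic on the same set, and the contour $\g$ may be deformed to wrap tightly around $[-\b,\b]$. Writing $R(x)=\mathrm{PV}\int \rho_T(y)\,dy/(x-y)$, the Plemelj--Sokhotski formula gives
$$
G_T(x\pm i0)=R(x)\mp i\pi\rho_T(x),\q x\in(-\b,\b).
$$
With the correct orientation (the collapsed contour traverses the upper edge from right to left and the lower edge from left to right), this yields
$$
\frac1{2\pi in}\int_\g \exp\{-n(az-G_T(z))\}\,dz
=\frac1{n\pi}\int_{-\b}^\b e^{-n(ax-R(x))}\sin\{n\pi\rho_T(x)\}\,dx,
$$
after combining the two boundary values via $e^{in\pi\rho_T}-e^{-in\pi\rho_T}=2i\sin(n\pi\rho_T)$.

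Next, I would split the integration range according to the three regimes for $\rho_T$. On the complement of $[-\b,\b]$ the density $\rho_T$ vanishes so $\sin(n\pi\rho_T)=0$. On $[-\a,\a]$ (vacuous in the subcritical case $T\le\pi^2$, where $\a=0$), $\rho_T$ is identically $1$, so $\sin(n\pi\rho_T(x))=\sin(n\pi)=0$ for integer $n$; this subinterval contributes nothing, which is fortunate because $R$ need not equal $xT/2$ there. On $\{\a<|x|<\b\}$, the identity \eqref{conjugate density} gives $R(x)=xT/2$, hence
$$
e^{-n(ax-R(x))}=e^{nx(T/2-a)}=e^{nx(b-a)/2}.
$$

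Finally, I would exploit the fact that $\rho_T$ is even (evident from \eqref{SCD} and \eqref{DCG}, since the problem is invariant under $x\mapsto -x$), to fold the integral from $[-\b,-\a]\cup[\a,\b]$ onto $[\a,\b]$. The $\pm x$ contributions combine via $e^{nx(b-a)/2}+e^{-nx(b-a)/2}=2\cosh(nx(b-a)/2)=2\cosh(nx(a-b)/2)$, yielding
$$
\frac2{n\pi}\int_\a^\b\cosh\{(a-b)nx/2\}\sin\{n\pi\rho_T(x)\}\,dx,
$$
which is the claimed expression, since the integrand vanishes outside $[\a,\b]$ (and hence the range of integration can be extended to $[0,\infty)$).

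The steps are all essentially routine; the only point requiring some care is the justification of the contour deformation and the correct sign from the Plemelj jump, together with the observation that integrality of $n$ kills the ``flat'' part of the density — this is what makes the boundary value $R(x)$ only need to be known on $\{\a<|x|<\b\}$, where \eqref{conjugate density} applies.
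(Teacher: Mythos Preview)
Your proposal is correct and follows essentially the same route as the paper's proof: collapse the contour onto $[-\b,\b]$, use the Plemelj--Sokhotski boundary values of $G_T$ to produce the $\sin(n\pi\rho_T)$ factor, substitute $R(x)=xT/2$ so that the exponential becomes $e^{nx(b-a)/2}$, and then symmetrize using the evenness of $\rho_T$. In fact you are slightly more careful than the paper on one point: the identity $R(x)=xT/2$ is only asserted in \eqref{conjugate density} for $|x|\in[\a,\b]$, and you explicitly note that on $[-\a,\a]$ the factor $\sin(n\pi\rho_T(x))=\sin(n\pi)=0$ kills the contribution, so the value of $R$ there is irrelevant---the paper's proof simply writes $R(x)=xT/2$ ``for any $x\in\R$'' without comment.
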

\begin{proof} 
Since the integrand of the left-hand side is holomorphic in $\C\sm[-\b,\b]$, 
we can take $\g$ to be the anti-clockwise boundary of $[-\b-\ve,\b+\ve]\times[-\ve,\ve]$ for any $\ve>0$. 
Now, as $\rho_T$ is H\"older continuous, by the Plemelj-Sokhotskyi formula \cite{MR1106850}, 
$G_T$ can be continuously extended, as $G_{+}$ and $G_{-}$ say, on $\overline{\H}=\{z\in\C:\im(z)\ge0\}$ and  $-\overline{\H}$, with
$$
G_\pm(x)=\mathrm{PV}\int_\R\frac{\rho_T(y)dy}{x-y}\mp i\pi\rho_T(x)=\frac{xT}2\mp i\pi\rho_T(x)
$$
for any $x\in\R$.  
We can take the limit $\ve\to0$ in the contour integral, using the dominated convergence theorem, to obtain
$$
\frac1{n\pi}\int_\R\exp\left\{(a-b)nx/2\right\}\sin\{n\pi\rho_T(x)\}dx.     
$$
But $\rho_T$ is symmetric, so this gives the claimed identity.
\end{proof}

\subsection{Proof of Proposition \ref{NFL}}\label{PRSIM}
Consider the discrete $\b$-ensemble $\L$ defined by \eqref{def beta ens}.
By Theorem \ref{LDP MaidaGuionnet}, 
\begin{equation}\label{WPR}
\mu_\L\to\mu_T\q\text{weakly in probability on $\R$ as $N\to\infty$}.
\end{equation}
Fix $n\in\N$.
By Lemma \ref{bounded support lemma}, there exist $C,R\in(0,\infty)$, independent of $N$, such that
\begin{align}\label{ENT}
\E(e^{2nT\L^*}1_{\O_R^c})\le Ce^{-N}
\end{align}
where
$$
\L^*=\max\{|\L_1|,|\L_N|\},\q\O_R=\{\supp(\mu_\L)\sse[-R,R]\}=\{\L^*\le R\}.
$$
We increase the value of $R$ if necessary so that
$$
\supp(\mu_T)\sse[-R,R].
$$
Denote by $\g_R$ the positively oriented boundary of the set
$$
[-R,R]+\{z\in\C:|z|\le1\}.
$$
Recall that, for $\a\in(0,\infty)$ and $\dist(z,\supp(\mu_\L))>1/\a$,  we set
$$
G_\L^\a(z)=\a\int_\R\Log\left(1+\frac1{\a(z-x)}\right)\mu_\L(dx).
$$
For $N\ge n+1$, the contour $\g_{R\vee\L^*}$ contains the set 
$$
\supp(\mu_\L)+\{z\in\C:|z|\le n/N\}
$$
so we can write, for $a\in(0,T)$,
$$
I_n^a(\L)=\frac{e^{-an^2/(2N)}}{2\pi in}\int_{\g_{R\vee\L^*}}\exp\{-n(az-G_\L^{N/n}(z))\}dz.
$$
Recall also that we set
$$
G_T(z)=\int_\R\frac{\mu_T(dx)}{z-x}
$$
and, for $a,b>0$ with $a+b=T$,
$$
I_n^a=I_n^b=\frac2{n\pi}\int_0^\infty\cosh\left\{(a-b)nx/2\right\}\sin\{n\pi\rho_T(x)\}dx
$$
and that, by Proposition \ref{formula sin}, 
$$
I_n^a=\frac1{2\pi in}\int_{\g_R}\exp\{-n(az-G_T(z))\}dz.
$$

In Proposition \ref{Representation simple loops beta ensemble} we showed that, for any simple loop $l\in L(\Sbb_T)$ which divides $\Sbb_T$ into components of areas $a$ and $b$,
$$
\E(\tr(H_l^n))=\E(I_n^a(\L))=\E(I_n^b(\L))
$$
and
$$
\E(|\tr(H_l^n)|^2)=\E(\tr(H_l^{-n})\tr(H_l^n))=\E(I_n^a(\L)I_n^b(\L)).
$$
We will show that, for all $n\in\N$, in the limit $N\to\infty$, uniformly in $a\in(0,T)$,
\begin{equation}\label{EIN}
\E(I_n^a(\L))\to I_n^a,\q\E(I_n^a(\L)I_n^b(\L))\to I_n^aI_n^b.
\end{equation}
Then
$$
\E(\tr(H_l^n))\to I_n^a,\q\E(|\tr(H_l^n)|^2)\to|I_n^a|^2
$$
so
$$
\E(|\tr(H_l^n)-I_n^a|^2)=\E(|\tr(H_l^n)|^2)-2\E(\tr(H_l^n))I_n^a+|I_n^a|^2\to0
$$
as required.

The following estimates hold for $|w|\le1/2$
$$
\left|\Log(1+w)\right|\le2|w|,\q 
\left|\Log(1+w)-w\right|\le|w|^2.
$$
We apply these estimates with $w=n/(N(z-x))$, for $N\ge2n$ and for points $z$ on the contour $\g_{R\vee\L^*}$ and $x$ in the support of $\mu_\L$, to obtain
$$
|G_\L^{N/n}(z)|\le2,\q |G_\L^{N/n}(z)-G_\L(z)|\le n/N
$$
where
$$
G_\L(z)=\int_\R\frac{\mu_\L(dx)}{z-x}.
$$
Note that $\g_R$ has length $4R+2\pi$.
By some straightforward estimation, on $\O_R^c$,
$$
|I_n^a(\L)|\le\frac1{2\pi n}(4\L^*+2\pi)e^{nT(\L^*+1)+2n}
$$
while, on $\O_R$, 
$$
|I_n^a(\L)|\le\frac1{2\pi n}(4R+2\pi)e^{nT(R+1)+2n}.
$$
Then, by the estimate \eqref{ENT}, uniformly in $a\in(0,T)$,
$$
\E(|I_n^a(\L)|1_{\O_R^c})\to0,\q \E(|I_n^a(\L)I_n^b(\L)|1_{\O_R^c})\to0
$$
while, by the weak limit \eqref{WPR}, also uniformly in $a\in(0,T)$,
\begin{align*}
I_n^a(\L)1_{\O_R}
&=1_{\O_R}\frac{e^{-an^2/(2N)}}{2\pi in}\int_{\g_R}\exp\{-n(az-G_\L^{N/n}(z))\}dz\\
&\q\q\to\frac1{2\pi in}\int_{\g_R}\exp\{-n(az-G_T(z))\}dz=I_n^a.
\end{align*}
in probability, and so
$$
\E(I_n^a(\L)1_{\O_R})\to I_n^a,\q \E(I_n^a(\L)I^b_n(\L)1_{\O_R})\to I_n^aI_n^b.
$$
The desired limits \eqref{EIN} now follow.

\section{Makeenko--Migdal equations}\label{MM}
Our aim in this section is to prove Proposition \ref{REG}.
For this, our main tool will be the the Makeenko--Migdal equations.
In order to formulate these precisely, we first give a description of the set of regular loops modulo area-preserving homeomorphisms of $\Sbb_T$. 
This allows to reduce our analysis to a series of finite-dimensional simplices, each representing the possible vectors of face-areas for a given combinatorial graph.
We show that the Makeenko--Migdal equations allow us to move area between faces of a regular loop provided only that the total area and the total winding number are conserved.
This finally allows an inductive scheme to bootstrap the convergence we have shown for simple loops to all regular loops.

\def\j{
\subsection{Wilson loops}
 and let $l\in L(\Sbb_T)$.
The random variable
$$  
W_l= \tr(H_l)
$$
is called a {\em Wilson loop variable}. 
We will analyse functions of the form
$$
w(l_1,\dots,l_n)=\E(W_{l_1}\dots W_{l_n})
$$
for loops $l_1,\dots,l_n\in L(\Sbb_T)$ using a system of differential equations.
In this, we exploit the fact that the Yang--Mills measure is invariant under area-preserving homeomorphisms to reduce to finite-dimensional parameter spaces which encode the areas of faces.
To make this precise, it is convenient to work with {\em labelled embedded graphs} in $\Sbb_T$, that is to say, sequences $\Gbb=(e_1,\dots,e_m)$ in $P(\Sbb_T)$ such that $\{e_1,\dots,e_m\}$ is an embedded graph in $\Sbb_T$.
}

\subsection{Combinatorial planar graphs and loops}
Given two labelled embedded graphs $\Gbb=(e_1,\dots,e_m)$ and $\Gbb'=(e_1',\dots,e_m')$, 
let us write $\Gbb\sim\Gbb'$ if there is an orientation-preserving homeomorphism $\th$ of $\Sbb_T$ such that $e_j'=\th\circ e_j$ for all $j$.
Further, let us write $\Gbb\approx\Gbb'$ if $\th$ may be chosen to be area-preserving.
Then $\sim$ and $\approx$ are equivalence relations on the set of labelled embedded graphs.
We will call the equivalence class of $\Gbb$ under $\sim$ the {\em combinatorial graph} associated to $\Gbb$.

We define a {\em standard labelling} of the vertices and faces of $\Gbb$ as follows.
Consider the sequence of vertices $(\underline e_1,\overline e_1,\dots,\underline e_m,\overline e_m)$ and 
write $V=(v_1,\dots,v_q)$ for the subsequence obtained by dropping any vertex which has already appeared.
Similarly consider the sequence of faces $(l(e_1),r(e_1),\dots,l(e_m),r(e_m))$, where $l(e_j)$ and $r(e_j)$ are the connected components of $\Sbb_T\sm\{e_1^*,\dots,e_m^*\}$ to the left and right of $e_j$.
Then write $F=(f_1,\dots,f_p)$ for the subsequence obtained by dropping any face which has already appeared.
Set
$$
\cV=\{1,\dots,q\},\q \cE=\{1,\dots,m\},\q \cF=\{1,\dots,p\}.
$$
The combinatorial graph associated to $\Gbb$ is then characterized\footnote{To see this, given $\Gbb'$ with the same combinatorial data, we can first define homeomorphisms $e_j^*\to{e_j'}^*$ by parametrization at constant speed, 
then extend the resulting homeomorphisms of face-boundaries to homeomorphisms of closed faces to obtain a homeomorphism of $\Sbb_T$.} by the integers $q,m,p$ and the functions $s,t:\cE\to\cV$ and $l,r:\cE\to\cF$ given by
\begin{itemize}
\item[(a)]
$s(j)=i$ if $v_i$ is the starting point of $e_j$, 
\item[(b)]
$t(j)=i$ if $v_i$ is the terminal point of $e_j$, 
\item[(c)]
$l(j)=k$ if $f_k$ is the face to the left of $e_j$,
\item[(d)]
$r(j)=k$ if $f_k$ is the face to the right of $e_j$.
\end{itemize}
We call any quadruple $\cG=(s,t,l,r)$ which arises in this way a {\em combinatorial planar graph}.
We freely identify $\cG$ with the corresponding equivalence class of labelled embedded graphs.

Given a combinatorial planar graph $\cG$, consider the simplex
$$
\Delta_\cG(T)=\{(a_1,\dots,a_p):a_k>0\text{ for all }k\text{ and }a_1+\dots+a_p=T\}.
$$
Given a labelled embedded graph $\Gbb\in\cG$, define the {\em face-area vector} $a(\Gbb)=(a_1,\dots,a_p)$ by
$$
a_k=\area(f_k).
$$
Then $a(\Gbb)\in\Delta_\cG(T)$.
For $a\in\Delta_\cG(T)$, set
$$
\cG(a)=\{\Gbb\in\cG:a(\Gbb)=a\}.
$$
The sets $\cG(a)$ are then the equivalence classes of the relation $\approx$.
There is a universal constant $C<\infty$ such that, for all $l\in\cG(a)$ and $l'\in\cG(a')$, 
\begin{equation}\label{MC}
\sum_{k=1}^p|a_k-a_k'|\le C(\ell(l)+\ell(l'))d(l,l')
\end{equation}
where $d$ is the length metric \eqref{metricP}.

We call a sequence $\mfl_0=((j_1,\ve_1),\dots,(j_r,\ve_r))$ in $\cE\times\{-1,1\}$ a {\em loop in $\cG$} if
\begin{equation}\label{COMBL}
t(j_k,\ve_k)=s(j_{k+1},\ve_{k+1})
\end{equation}
for $k=1,\dots,r$, where $j_{r+1}=j_1$ and $\ve_{r+1}=\ve_1$ and where
$$
s(\ve,j)=t(-\ve,j)=\begin{cases}s(j),&\text{if $\ve=1$},\\t(j),&\text{if $\ve=-1$}.\end{cases}
$$
The condition \eqref{COMBL} means that, in any labelled embedded graph $\Gbb=(e_1,\dots,e_m)\in\cG$, 
we can concatenate the sequence of edges $(e_{j_1}^{\ve_1},e_{j_2}^{\ve_2},\dots,e_{j_r}^{\ve_r})$ to form a loop 
$$
l_0=e_{j_1}^{\ve_1}e_{j_2}^{\ve_2}\dots e_{j_r}^{\ve_r}.
$$
Then we call $l_0$ the drawing of $\mfl_0$ in $\Gbb$.
Note that the sequence
$$
\mfl^{-1}=((j_r,-\ve_r),\dots,(j_1,-\ve_1))
$$
is then also a loop in $\cG$, whose drawing in $\Gbb$ is the reversal $l^{-1}$ of $l$.
Note also the obvious notion of concatenation for loops in $\cG$.

In the case of interest to us, $\cG$ will be the combinatorial graph of the labelled embedded graph $\Gbb=(e_1,\dots,e_m)$ of a regular loop $l$.
Then, if $l$ has $n$ self-intersections, we have $q=n+1$, $m=2n+1$ and, by Euler's relation, $p=n+2$.
Note that the set of self-intersections is given in the standard labelling by $\{v_i:i\in\cI\}$, where $\cI=\{2,3,\dots,n+1\}$.
We recover $l$ as the drawing in $\Gbb$ of the loop 
$$
\mfl=((1,1),\dots,(2n+1,1))
$$
in $\cG$.
We call the pair $(\cG,\mfl)$ a {\em combinatorial planar loop}.
For each $n\ge0$, there are only finitely many combinatorial loops with $n$ self-intersections.
We will write abusively $\mfl$ for $(\cG,\mfl)$, $\Delta_\mfl(T)$ for $\Delta_\cG(T)$ and $\mfl(a)$ for $\cG(a)$.
Given a loop $\mfl_0$ in $\cG$, it may be that the drawing $l_0$ of $\mfl_0$ in $\Gbb$ is a regular loop.
We could then consider the combinatorial loop associated to $l_0$, without
reference to its relation to $l$. 
We will therefore need to make clear when such a combinatorial loop is to
be considered in the context of a larger combinatorial graph.

\subsection{Generalized Makeenko--Migdal equations} 
Let $\mfl$ be a combinatorial planar loop.
Write $m$ and $p$ for the numbers of edges and faces in the associated combinatorial graph.
Let $H=(H_\g:\g\in P(\Sbb_T))$ be a Yang--Mills holonomy field in $U(N)$. 

\begin{prop}\label{DTC}
Let $f:U(N)^m\to\C$ be a continuous bounded function.
Then there is a uniformly continuous bounded function $E(f):\Delta_\mfl(T)\to\C$ such that
$$
E(f)(a)=\E(f(H_{e_1},\dots,H_{e_m}))
$$
for $a\in\Delta_\mfl(T)$, whenever $\Gbb=(e_1,\dots,e_m)$ is a labelled embedded graph with $\Gbb\in\mfl(a)$.
\end{prop}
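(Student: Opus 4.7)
My plan is to verify two things. First, that the expression $\E(f(H_{e_1},\dots,H_{e_m}))$ depends only on $a=a(\Gbb)\in\Delta_\mfl(T)$ and not on the particular choice of $\Gbb\in\mfl(a)$, so that $E(f)(a)$ is well defined. Second, that the resulting function $E(f):\Delta_\mfl(T)\to\C$ is bounded and uniformly continuous. The first point is immediate from the finite-dimensional distribution formula \eqref{Discrete Yang Mills}, which gives the joint density of $(H_{e_1},\dots,H_{e_m})$ with respect to Haar measure as $p_T(1)^{-1}\prod_{k=1}^p p_{a_k}(h_{f_k})$. Here the face-holonomy words $h_{f_k}=h_{f_k}(h_1,\dots,h_m)$ are read off the combinatorial planar graph $\cG$, and this data is common to every element of $\mfl$. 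Since any two elements of $\mfl(a)$ also share the area vector $a$, the joint density, and hence the expectation, coincide.

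Boundedness $|E(f)(a)|\le\|f\|_\infty$ is obvious. For pointwise continuity on the open simplex $\Delta_\mfl(T)$, I would apply dominated convergence to the integral
\begin{equation*}
E(f)(a)=\frac{1}{p_T(1)}\int_{U(N)^m} f(h_1,\dots,h_m)\prod_{k=1}^p p_{a_k}(h_{f_k})\,dh_1\cdots dh_m,
\end{equation*}
using that $(t,g)\mapsto p_t(g)$ is jointly continuous on $(0,\infty)\times U(N)$ and uniformly bounded on $[\varepsilon,T]\times U(N)$ for every $\varepsilon>0$.

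The main obstacle is upgrading this pointwise continuity to \emph{uniform} continuity, since $\Delta_\mfl(T)$ is not compact and $p_t$ becomes singular at the identity as $t\to 0$. My plan is to exhibit a continuous extension of $E(f)$ to the compact closed simplex $\overline{\Delta_\mfl(T)}\subset\R^p$; uniform continuity on $\Delta_\mfl(T)$ then follows at once by compactness of the closure. The extension rests on the weak convergence $p_t(g)\,dg\to\delta_1$ on $U(N)$ as $t\to 0^+$: if a subset $J$ of face areas degenerates to zero, the factors $p_{a_k}(h_{f_k})$ for $k\in J$ concentrate on configurations satisfying $h_{f_k}=1$, and $E(f)(a)$ converges to the analogous Yang--Mills expectation on the reduced combinatorial structure in which those face constraints are enforced. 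The technical step is to control the joint degeneration of several singular factors simultaneously; I would handle this by induction on $|J|$, integrating out the edge variables bordering each vanishing face in turn and using the total-mass identity $\int_{U(N)}p_t(g)\,dg=1$ to obtain a uniform estimate on the remaining singular product.
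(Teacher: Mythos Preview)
Your well-definedness argument via the density formula \eqref{Discrete Yang Mills} is correct and is exactly what the paper does. The difference is in how uniform continuity is obtained.

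The paper bypasses all direct heat-kernel analysis near the boundary of the simplex. Instead, it constructs a continuous family of paths $p_1,\dots,p_m:\overline{\Delta_\mfl(T)}\to P(\Sbb_T)$ with fixed endpoints, such that for each $a\in\Delta_\mfl(T)$ the paths $(p_1(a),\dots,p_m(a))$ are the edges of a labelled embedded graph in $\mfl(a)$. One then defines $E(f)(a)=\E(f(H_{p_1(a)},\dots,H_{p_m(a)}))$ on the \emph{closed} simplex and invokes property (c) of the Yang--Mills holonomy field (continuity in probability for convergence in length with fixed endpoints), together with bounded convergence, to conclude that $E(f)$ is continuous on the compact set $\overline{\Delta_\mfl(T)}$, hence uniformly continuous. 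The hard analysis you are proposing to do is already packaged inside L\'evy's existence theorem (Theorem \ref{Thm Exist YM}), and the paper simply uses that.

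Your route---analysing the integral directly and controlling the joint degeneration of several heat-kernel factors---is in principle workable, but it amounts to re-proving a piece of L\'evy's continuity result, and your sketch of the key step is incomplete. The induction on $|J|$ is not as clean as you suggest: when a face degenerates, enforcing $h_{f_k}=1$ collapses an edge and changes the combinatorial structure of the graph, and when several adjacent faces degenerate simultaneously these collapses interact. Getting a \emph{uniform} bound (not just pointwise convergence) out of the total-mass identity requires more than you have written; one really needs to track how the reduced density on the quotient graph depends on the remaining areas. This can be done, but the paper's geometric argument is both shorter and avoids the issue entirely.
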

\begin{proof}
Write $\overline{\Delta_\mfl(T)}$ for the closure of $\Delta_\mfl(T)$ in $\R^p$.
There is a sequence of continuous maps
$$
p_j:\overline{\Delta_\mfl(T)}\to P(\Sbb_T),\q j=1,\dots,m
$$
such that, for all $a\in\Delta_\mfl(T)$, the endpoints of the paths $p_1(a),\dots,p_m(a)$ do not depend on $a$
and we may concatenate these paths to form a regular loop $l(a)$ with $\Gbb_{l(a)}\in\mfl(a)$.
Define $E(f):\overline{\Delta_\mfl(T)}\to\C$ by
$$
E(f)(a)=\E(f(H_{p_1(a)},\dots,H_{p_m(a)})).
$$
Since $H$ is continuous in probability for convergence in length with fixed endpoints, 
we see by bounded convergence that $E(f)$ is continuous on $\overline{\Delta_\mfl(T)}$, and hence uniformly continuous.
On the other hand, for all $a\in\Delta_\mfl(T)$ and any embedded graph $\Gbb=(e_1,\dots,e_m)\in\mfl(a)$, we see from \eqref{Discrete Yang Mills} that 
$$
\E(f(H_{e_1},\dots,H_{e_m}))=\E(f(H_{p_1(a)},\dots,H_{p_m(a)})).
$$
\end{proof}
For $i\in\{1,\dots,m\}$ and $g\in U(N)$, define maps $R_{i,g}$ and $\hat R_{i,g}$ on $U(N)^m$ by
\begin{align*}
R_{i,g}(h_1,\dots,h_m)&=(h_1,\dots,h_ig,\dots,h_m),\\
\hat R_{i,g}(h_1,\dots,h_m)&=(h_1,\dots,g^{-1}h_i,\dots,h_m).
\end{align*}
A function $f:U(N)^m\to\C$ is said to have {\em extended gauge invariance} if, 
for all $g\in U(N)$ and for $i=1,\dots,m-1$, 
$$
f\circ\hat R_{i,g}\circ R_{i+1,g}=f.
$$
Thus we require
$$
f(h_1,\dots,g^{-1}h_i,h_{i+1}g,\dots,h_m)=f(h_1,\dots,h_i,h_{i+1},\dots,h_m).
$$
For $i\in\{1,\dots,m\}$ and $X\in\mfu(N)$, define a differential operator $\cL^i_X$ on $U(N)^m$ by 
$$
\cL^i_X(f)=\left.\frac{d}{dt}\right|_{t=0}f\circ R_{i,e^{tX}}.
$$
Choose an orthonormal basis $(X_n:n\in\N)$ for $\mfu(N)$ (with inner product \eqref{metric}) and, for $i,j\in\{1,\dots,m\}$, define
$$
\Delta_{i,j}(f)=\sum_n\cL^i_{X_n}\circ\cL^j_{X_n}(f).
$$
The operator $\Delta_{i,j}$ does not depend on the choice of orthonormal basis.  

Write $\cI$ of the set of intersection labels and $\cF$ for the set of face labels in the combinatorial graph $\cG$ of $\mfl$, as usual.
For $i\in\cI$, define a (constant) vector field $\mu_i$ on $\Delta_\mfl(T)$ as follows.
Choose $\Gbb\in\cG$ and write $l$ for the drawing of $\mfl$ in $\Gbb$.
In the standard labelling of $\Gbb$, the vertex $v_i$ is a self-intersection of $l$, 
so there is a unique sequence $(k_1,k_2,k_3,k_4)$ in $\cF$ such that $(f_{k_1},f_{k_2},f_{k_3},f_{k_4})$ is an anti-clockwise circuit of the faces of $\Gbb$ around $v_i$, 
starting from the unique face $f_{k_1}$ adjacent to both outgoing strands of $l$.  
This sequence does not depend on the choice of $\Gbb$.
Set
\begin{equation}\label{MMCP}
\mu_i=\pd_{k_1}-\pd_{k_2}+\pd_{k_3}-\pd_{k_4}
\end{equation}
where $\pd_k$ denotes the elementary vector field in direction $k$.

The following theorem is a specialization of a result of Driver, Gabriel, Hall and Kemp \cite[Theorem 2]{MR3631396}, which generalizes a formulation of L\'evy \cite{MR3636410}.

\begin{thm}\label{GMM}
Let $f:U(N)^m\to\C$ be a smooth function having extended gauge invariance.
Then, for all $i\in\cI$, the function $E(f)$ has directional derivative on $\Delta_\mfl(T)$ in direction $\mu_i$ given by 
$$
\mu_iE(f)=-E(\Delta_{j_1,j_2}(f))
$$
where $j_1,j_2$ are determined by $s(j_1)=s(j_2)=i$.
\end{thm}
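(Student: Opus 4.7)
The plan is to differentiate the explicit density formula \eqref{Discrete Yang Mills} in the area parameters, apply the heat equation to convert area-derivatives into Laplacians on the bi-invariant heat kernel, and then use bi-invariance of Haar measure to integrate by parts and transfer these Laplacians onto $f$. This strategy follows Driver--Gabriel--Hall--Kemp \cite{MR3631396}; the bulk of the real content lies in a local combinatorial identity at the intersection vertex $v_i$.

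Concretely, I would first fix a reference labelled embedded graph $\Gbb\in\cG$, write $h_{f_k}$ for the ordered product of edge variables (with appropriate inverses) traversing the boundary of face $f_k$, and express
$$
E(f)(a)=\frac{1}{p_T(1)}\int_{U(N)^m}f(h_1,\dots,h_m)\prod_{k=1}^{p}p_{a_k}(h_{f_k})\prod_{j=1}^{m}dh_j.
$$
By \eqref{MMCP}, the vector $\mu_i$ only touches the heat kernels on the four faces around $v_i$. Using $\pd_t p_t=\tfrac12\Delta p_t$, this gives
$$
\mu_i E(f)(a)=\frac{1}{2p_T(1)}\sum_{\iota=1}^{4}(-1)^{\iota-1}\int f(h)\bigl[\Delta_{h_{f_{k_\iota}}}p_{a_{k_\iota}}(h_{f_{k_\iota}})\bigr]\prod_{k\neq k_\iota}p_{a_k}(h_{f_k})\prod_{j=1}^{m}dh_j,
$$
where $\Delta_{h_f}$ denotes the bi-invariant Laplacian acting in the $h_f$ variable.

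Next, I would move each $\Delta_{h_{f_k}}$ off the heat kernel by integrating by parts against the edge variables. Since $p_{a_k}$ is a class function and the remaining Haar integrations are translation-invariant, this Laplacian can be rewritten as $\sum_n(\cL^{j}_{X_n})^2$ acting at any single bounding edge $j$ of $f_k$. The heart of the argument is then a local identity at $v_i$: using the extended gauge invariance of $f$, I would commute these single-edge derivatives through the intersection onto the two edges $j_1,j_2$ that share the starting vertex $v_i$. The $+,-,+,-$ pattern around $v_i$ in \eqref{MMCP}, combined with the signs produced by the local positions of $j_1,j_2$ relative to the four surrounding faces, should collapse the four-face sum into the single bilinear expression $\Delta_{j_1,j_2}(f)=\sum_n\cL^{j_1}_{X_n}\cL^{j_2}_{X_n}(f)$ with an overall minus sign.

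The main obstacle will be exactly this local identity at $v_i$. One must track, with signs and orientations, how each of the four face-Laplacians, after being rewritten as $\sum_n(\cL^j_{X_n})^2$ on a chosen bounding edge, produces left- or right-invariant derivatives that can be repositioned onto $j_1$ and $j_2$ by the extended gauge invariance relations $f\circ\hat R_{i,g}\circ R_{i+1,g}=f$; then one must check that the alternating sum over the four faces telescopes to $-\Delta_{j_1,j_2}(f)$. This is an adaptation of the Driver--Sengupta integration-by-parts lemma to a four-valent vertex, and the careful bookkeeping of which face-boundary contains which oriented edge is the combinatorial core of Theorem \ref{GMM}.
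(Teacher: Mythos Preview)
The paper does not give its own proof of Theorem \ref{GMM}: it states the result as a specialization of \cite[Theorem 2]{MR3631396} (Driver--Gabriel--Hall--Kemp) and moves on. Your sketch follows precisely that cited argument --- differentiate the density in the face-areas via the heat equation, integrate by parts against Haar measure, and use extended gauge invariance at the four-valent vertex to collapse the alternating sum of face-Laplacians to the mixed operator $\Delta_{j_1,j_2}$ --- so your approach is exactly the one the paper invokes by reference.
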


\subsection{Makeenko--Migdal equations for Wilson loops} 
Given a loop $\mfl_0=((j_1,\ve_1),\dots,(j_r,\ve_r))$ in $\cG$, we can define a continuous bounded function $W_{\mfl_0}:U(N)^m\to\C$ by
$$
W_{\mfl_0}(h_1,\dots,h_m)=\tr(h_{j_r}^{\ve_r}\dots h_{j_1}^{\ve_1}).
$$
Given a sequence of loops $(\mfl_1,\dots,\mfl_k)$ in $\cG$, define the {\em Wilson loop function} 
$$
\phi^N_{\mfl_1,\dots,\mfl_k}:\Delta_\mfl(T)\to\C
$$ 
by
$$
\phi^N_{\mfl_1,\dots,\mfl_k}=E(W_{\mfl_1}\dots W_{\mfl_k}).
$$
Then $\phi^N_{\mfl_1,\dots,\mfl_k}$ is uniformly continuous and, for all $a\in\Delta_\mfl(T)$ and all $\Gbb\in\mfl(a)$, 
\begin{equation}\label{WILH}
\phi^N_{\mfl_1,\dots,\mfl_k}(a)=\E(\tr(H_{l_1})\dots\tr(H_{l_k}))
\end{equation}
where $l_1,\dots,l_k$ are the drawings of $\mfl_1,\dots,\mfl_k$ in $\Gbb$.
We will write $\phi^N_{\mfl_1,\dots,\mfl_k}$ also for the continuous extension
to $\overline{\Delta_\mfl(T)}$.

For $i\in\cI$, we obtain two regular loops $l_i$ and $\hat l_i$ by splitting $l$ at $v_i$, that is, by following the two outgoing strands of $l$ from $v_i$ until their first return to $v_i$.
In one case we will pass through the endpoint of $l$ and begin another circuit of $l$ until we reach $v_i$.
Write $\mfl_i$ and $\hat\mfl_i$ for the loops in $\cG$ whose drawings in $\Gbb$ are $l_i$ and $\hat l_i$, which do not depend on the choice of $\Gbb$.
Then set
$$
[\mfl]_i=\mfl_i\hat\mfl_i\mfl_i^{-1}\hat\mfl_i^{-1},\q
[\hat\mfl]_i=\hat\mfl_i\mfl_i\hat\mfl_i^{-1}\mfl_i^{-1}
$$
where $\mfl_i^{-1},\hat\mfl_i^{-1}$ denote the reversals of $\mfl_i,\hat\mfl_i$ and the right-hand sides are understood as concatenations.

\begin{prop}[Makeenko--Migdal equations for Wilson loops]
\label{MM equations} 
The functions $\phi^N_\mfl$ and $\phi^N_{\mfl,\mfl^{-1}}$ have directional derivatives in $\Delta_\mfl(T)$ in direction $\mu_i$ given by
$$
\mu_i\phi^N_\mfl=\phi^N_{\mfl_i,\hat\mfl_i},\q
\mu_i\phi^N_{\mfl,\mfl^{-1}}=\phi^N_{\mfl_i,\hat\mfl_i,\mfl^{-1}}+\phi^N_{\mfl,\mfl_i^{-1},\hat\mfl_i^{-1}}-N^{-2}(\phi^N_{[\mfl]_i}+\phi^N_{[\hat\mfl]_i}).
$$
\end{prop}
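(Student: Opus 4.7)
The plan is to derive both identities from Theorem \ref{GMM}, applied respectively to $f = W_\mfl$ and $f = W_\mfl W_{\mfl^{-1}}$. The first step will be to verify that both are smooth with extended gauge invariance, as required by that theorem. Writing $\mfl = ((1,1),\dots,(m,1))$ so that $W_\mfl(h) = \tr(h_m\cdots h_1)$ and $W_{\mfl^{-1}}(h) = \tr(h_1^{-1}\cdots h_m^{-1})$, the substitution $(h_i, h_{i+1}) \mapsto (g^{-1}h_i, h_{i+1}g)$ leaves both $h_{i+1}h_i$ and $h_i^{-1}h_{i+1}^{-1}$ unchanged, so extended gauge invariance holds for both Wilson loop functions and hence for their product. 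Theorem \ref{GMM} then yields $\mu_i\phi^N_\mfl = -E(\Delta_{j_1,j_2}W_\mfl)$ and $\mu_i\phi^N_{\mfl,\mfl^{-1}} = -E(\Delta_{j_1,j_2}(W_\mfl W_{\mfl^{-1}}))$, where $j_1, j_2$ are the two edges with $s(j_1) = s(j_2) = i$.

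The computations of $\Delta_{j_1,j_2}$ rest on two standard Fierz identities for an orthonormal basis $(X_n)$ of $\mfu(N)$ in the inner product \eqref{metric}:
\begin{equation*}
\sum_n \tr(AX_nBX_n) = -\tr(A)\tr(B), \qquad \sum_n \tr(AX_n)\tr(BX_n) = -N^{-2}\tr(AB).
\end{equation*}
For the first equation I would split $l$ at $v_i$ as $l = \alpha\beta\gamma$, so that $\beta = l_v$ and $\gamma\alpha = \hat l_v$; writing $A = H_\gamma$, $B = H_\beta = H_{l_v}$, $C = H_\alpha$, one has $W_\mfl = \tr(ABC)$ while $\cL^{j_1}_{X_n}$ and $\cL^{j_2}_{X_n}$ insert $X_n$ immediately on either side of $B$. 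Hence $\Delta_{j_1,j_2}W_\mfl = \sum_n \tr(AX_nBX_nC)$, which after a cyclic rotation and the first Fierz identity collapses to $-\tr(CA)\tr(B) = -\tr(H_{\hat l_v})\tr(H_{l_v})$. Taking expectations and applying Theorem \ref{GMM} yields $\mu_i\phi^N_\mfl = \phi^N_{\mfl_i,\hat\mfl_i}$.

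For the second equation I would expand $\Delta_{j_1,j_2}(W_\mfl W_{\mfl^{-1}})$ by the Leibniz rule into two diagonal pieces and two cross-products. The diagonal pieces are handled exactly as above, with the parallel computation for $W_{\mfl^{-1}} = \tr(C^{-1}B^{-1}A^{-1})$ producing $-\tr((CA)^{-1})\tr(B^{-1})$ (the two sign flips from differentiating inverses pair off), so on expectation they contribute $\phi^N_{\mfl_i,\hat\mfl_i,\mfl^{-1}}$ and $\phi^N_{\mfl,\mfl_i^{-1},\hat\mfl_i^{-1}}$. Each cross-product $\sum_n (\cL^{j_a}_{X_n}W_\mfl)(\cL^{j_b}_{X_n}W_{\mfl^{-1}})$ contains only single insertions of $X_n$, so after cyclic rearrangement it has the form $-\sum_n \tr(PX_n)\tr(QX_n)$, and the second Fierz identity reduces it to $N^{-2}\tr(PQ)$ for explicit products. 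The step I expect to be the main obstacle is identifying these matrix products with the holonomies of the commutator loops appearing in the statement: a short computation gives $PQ\in\{BDB^{-1}D^{-1},\,DBD^{-1}B^{-1}\}$ with $D = H_{\hat l_v}$, and one then needs to use the order-reversing convention $H_{\gamma_1\gamma_2} = H_{\gamma_2}H_{\gamma_1}$ together with cyclic invariance of the trace to match these to $\tr(H_{[\hat\mfl]_i})$ and $\tr(H_{[\mfl]_i})$, up to a cyclic rotation of the base point of the commutator loop. Assembling the four contributions and multiplying by $-1$ produces the stated formula, with the cross terms supplying precisely the $-N^{-2}(\phi^N_{[\mfl]_i} + \phi^N_{[\hat\mfl]_i})$ correction.
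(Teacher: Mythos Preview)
Your proposal is correct and follows essentially the same approach as the paper: apply Theorem~\ref{GMM} to $f=W_\mfl$ and $f=W_\mfl W_{\mfl^{-1}}$, then compute $\Delta_{j_1,j_2}f$ using the basis identity $\sum_n X_n\otimes X_n=-N^{-1}\sum_{j,k}E_{j,k}\otimes E_{k,j}$ (your two Fierz identities are immediate consequences). The paper carries out the four-term expansion by a direct calculation rather than naming the Leibniz split, but the content is identical; your anticipated ``main obstacle'' of matching the cross terms to $\tr(H_{[\mfl]_i})$ and $\tr(H_{[\hat\mfl]_i})$ via cyclic invariance and the holonomy convention $H_{\g_1\g_2}=H_{\g_2}H_{\g_1}$ is exactly what the paper does implicitly in its last displayed formula.
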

\begin{proof}
We give details only for $\phi^N_{\mfl,\mfl^{-1}}$.
The simpler argument for $\phi^N_\mfl$ will then be obvious.
The argument for $\phi^N_\mfl$ already appeared after Theorem 2.6 in \cite{MR3613519} and in Section 9.2 of \cite{MR3636410}.
Given $\Gbb=(e_1,\dots,e_m)\in\cG$, set $l=e_1\dots e_m$, so $l$ is the drawing of $\mfl$ in $\Gbb$.
Given $h=(h_1,\dots,h_m)\in U(N)^m$, there is a unique multiplicative function $(h_\g:\g\in P(\Gbb))\in\cM(P(\Gbb),U(N))$ such that $h_{e_j}=h_j$ for all $j$.
Then $\phi^N_{\mfl,\mfl^{-1}}=E(f)$, where $f=|W_\mfl|^2$ and
$$
W_\mfl(h_1,\dots,h_m)=\tr(h_l)=\tr(h_m\dots h_1).
$$
Note that $W_\mfl$ has extended gauge invariance and so also does $f$.
We can write $l_i=e\g$ and $\hat l_i=\hat e\hat\g$, where $e=e_{j_1},\hat e=e_{j_2}$, $s(j_1)=s(j_2)=i$ and $\g,\hat\g\in P(\Gbb)$.
Then
$$
f(h)=\tr(h_l)\tr(h_l^{-1})=\tr(h_{\hat\g}h_{\hat e}h_\g h_e)\tr(h_e^{-1}h_\g^{-1}h_{\hat e}^{-1}h_{\hat\g}^{-1}).
$$
For $X\in\mfu(N)$,
\begin{align*}
\cL_X^{j_1}\circ\cL_X^{j_2}(f)(h)&=\tr(h_{\hat\g}h_{\hat e}Xh_\g h_eX)\tr(h_l^{-1})+\tr(h_l)\tr(Xh_e^{-1}h_\g^{-1}Xh_{\hat e}^{-1}h_{\hat\g}^{-1})\\
&\q\q\q\q-\tr(h_{\hat\g}h_{\hat e}h_\g h_eX)\tr(h_e^{-1}h_\g^{-1}X h_{\hat e}^{-1}h_{\hat\g}^{-1})\\
&\q\q\q\q-\tr(h_{\hat\g}h_{\hat e}Xh_\g h_e)\tr(Xh_e^{-1}h_\g^{-1}h_{\hat e}^{-1}h_{\hat\g}^{-1}).
\end{align*}
Write $E_{j,k}$ for the elementary matrix with a $1$ in the $(j,k)$-entry. 
Set
$$
X_{j,j}=iE_{j,j}/{\sqrt N},\q
X_{j,k}=\begin{cases}
(E_{j,k}-E_{k,j})/{\sqrt{2N}},&\text{ for $j<k$},\\
i(E_{j,k}+E_{k,j})/{\sqrt{2N}},&\text{ for $j>k$}.
\end{cases}
$$
Then $\{X_{j,k}:j,k=1,\dots,N\}$ is an orthonormal basis in $\mfu(N)$. 
A simple calculation gives the standard identity
$$ 
\sum_{j,k=1}^NX_{j,k}\otimes X_{j,k}=-\frac1N\sum_{j,k=1}^NE_{j,k}\otimes E_{k,j}.
$$
We sum to obtain
\begin{align*}
-\Delta_{j_1,j_2}(f)(h)&=\tr(h_{\hat\g}h_{\hat e})\tr(h_\g h_e)\tr(h_l^{-1})+\tr(h_l)\tr(h_e^{-1}h_\g^{-1})\tr(h_{\hat e}^{-1}h_{\hat\g}^{-1})\\
&\q\q\q\q-\frac1{N^2}\tr(h_{\hat\g}h_{\hat e}h_\g h_eh_{\hat e}^{-1}h_{\hat\g}^{-1}h_e^{-1}h_\g^{-1})\\
&\q\q\q\q-\frac1{N^2}\tr(h_\g h_eh_{\hat\g}h_{\hat e}h_e^{-1}h_\g^{-1}h_{\hat e}^{-1}h_{\hat\g}^{-1})
\end{align*}
and hence, by Theorem \ref{GMM},
$$
\mu_i\phi^N_{\mfl,\mfl^{-1}}=-E(\Delta_{j_1,j_2}(f))=\phi^N_{\mfl_i,\hat\mfl_i,\mfl^{-1}}+\phi^N_{\mfl,\mfl_i^{-1},\hat\mfl_i^{-1}}-N^{-2}(\phi^N_{[\mfl]_i}+\phi^N_{[\hat\mfl]_i}).
$$
\end{proof}

\def\j{
Remark that $f(h)=\tr(h_l)\tr(h_{\hat l})$ and denote by $\a$ and $\b$ the two paths such that $l=e_1\a e_4^{-1}e_2\b e_3^{-1}$.  
For any $X\in\mfg$,
$$
\begin{aligned}
\cL_X^{e_1}\circ\cL_X^{e_2}(f)&=\tr(h_{e_3}^{-1}h_\b h_{e_2}Xh_{e_4}^{-1}h_\a h_{e_1}X)\tr(h_{\hat l})+\tr(h_l)\tr(Xh_{e_1}^{-1}h_\a^{-1}h_{e_4}Xh_{e_2}^{-1}h_\b^{-1}h_{e_3})\\
&\q\q\q\q-\tr(h_{e_3}^{-1}h_\b h_{e_2}h_{e_4}^{-1}h_\a h_{e_1}X )\tr(h_{e_1}^{-1}h_\a^{-1}h_{e_4}X h_{e_2}^{-1}h_\b^{-1}h_{e_3})\\
&\q\q\q\q-\tr(h_{e_3}^{-1}h_\b h_{e_2}Xh_{e_4}^{-1}h_\a h_{e_1})\tr(X h_{e_1}^{-1}h_\a^{-1}h_{e_4} h_{e_2}^{-1}h_\b^{-1}h_{e_3})
\end{aligned}
$$
We use the identity%
\footnote{See for example \cite[Lemma 1.2.1]{MR3636410}}
$$ 
\sum_nX_n\otimes X_n=-\frac1N\sum_{i,j=1}^NE_{i,j}\otimes E_{j,i}
$$
where $E_{i,j}$ is the elementary matrix with a $1$ in the $(i,j)$-entry, to obtain
\begin{align*}
-\Delta_{e_1,e_2}(f)&=\tr(h_{e_3}^{-1}h_\b h_{e_2})\tr(h_{e_4}^{-1}h_\a h_{e_1})\tr(h_{\hat l})+\tr(h_l)\tr(h_{e_1}^{-1}h_\a^{-1}h_{e_4})\tr(h_{e_2}^{-1}h_\b^{-1}h_{e_3})\\
&-\frac1{N^2}\tr(h_{e_3}^{-1}h_\b h_{e_2}h_{e_4}^{-1}h_\a h_{e_1}h_{e_2}^{-1}h_\b^{-1}h_{e_3}h_{e_1}^{-1}h_\a^{-1}h_{e_4})\\
&-\frac1{N^2}\tr(h_{e_4}^{-1}h_\a h_{e_1}h_{e_3}^{-1}h_\b h_{e_2}h_{e_1}^{-1}h_\a^{-1}h_{e_4}h_{e_2}^{-1}h_\b^{-1}h_{e_3}).
\end{align*}
To conclude, note that $\mfl_i$ and $\mfl_i'$ are given by $e_2\b e_3^{-1}$ and $e_1\a e_4^{-1}$. 
}

\subsection{Makeenko--Migdal vectors and the winding number\label{MM section}}
Let $l\in L(\Sbb_T)$ be a regular loop and let $\Gbb=(V,E,F)$ be the associated labelled embedded graph.
The winding number of $l$ is a function 
$$
n_l:F\to\Z
$$
defined up to an additive constant, which may be computed as follows.
Fix a reference face $f_0\in F$. 
For each face $f\in F$, there is a non-negative integer $k$ and a {\em track} from $f_0$ to $f$, comprising edges $e_1,\dots,e_k$ and faces $f_1,\dots,f_k$ such that $f_k=f$ and $e_j$ is adjacent to both $f_{j-1}$ and $f_j$ for all $j$.
(The notation here does not refer to the standard labelling of $\Gbb$.)
Set
$$
n_l(f)=L(f)-R(f)
$$
where $L(f)$ and $R(f)$ are the numbers of edges $e_j$ with $f_j$ on the left and right respectively.
Then $L(f)$ and $R(f)$ are well-defined functions of the track, and $n_l(f)$ does not depend on the choice of track.
Moreover, the function $n_l$ depends on the choice of reference face only by the addition of a constant.
The winding number is invariant under orientation-preserving homeomorphisms of $\Sbb_T$, so we obtain also a function
$$
n_\mfl:\cF\to\Z
$$
determined by the associated combinatorial loop $\mfl$, also defined up to an additive constant, by setting 
$$
n_\mfl(k)=n_l(f)
$$ 
where $f$ is the $k$th face in the standard labelling of $\Gbb$.

The following lemma is a reformulation of a lemma of L\'evy \cite[Lemma 6.28]{MR3636410}. 
See also Dahlqvist \cite[Lemma 21]{MR3554890}. 
We give a slightly different proof, relying on properties of the winding number in place of a dimension-counting argument.
The prior results were stated for the whole plane, while ours applied to the sphere, but this make little difference to the argument.

\begin{lem}
There is an orthogonal direct sum decomposition
$$
\R^\cF=\mfm_\mfl\oplus\mfn_\mfl
$$
where
$$
\mfm_\mfl=\spann\{\mu_i:i\in\cI\},\q \mfn_\mfl=\spann\{1,n_\mfl\}.
$$
\end{lem}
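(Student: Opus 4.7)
The plan is to first verify orthogonality of the two subspaces, and then to show that $\mfm_\mfl^\perp\subseteq\mfn_\mfl$, which together with orthogonality forces the orthogonal direct sum decomposition (and, as a by-product, the linear independence of the $\mu_i$).

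\textbf{Orthogonality.} Each $\mu_i=\pd_{k_1}-\pd_{k_2}+\pd_{k_3}-\pd_{k_4}$ has coefficient sum zero, so $\mu_i\cdot 1=0$. For $\mu_i\cdot n_\mfl=0$, I would work locally at $v_i$: because the self-intersection is transverse, an anti-clockwise tour $f_{k_1}\to f_{k_2}\to f_{k_3}\to f_{k_4}\to f_{k_1}$ through the four surrounding faces crosses each of the two strands of $l$ passing through $v_i$ exactly twice, and the two crossings of a given strand occur in opposite senses. Since the jump of $n_\mfl$ across an oriented strand is $\pm 1$ with a sign depending only on the crossing direction, the two crossings of a single strand contribute opposite signs to $n_\mfl(k_1)-n_\mfl(k_2)+n_\mfl(k_3)-n_\mfl(k_4)$, and the alternating sum vanishes.

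\textbf{Computing $\mfm_\mfl^\perp$.} Given $v\in\R^\cF$ with $v\cdot\mu_i=0$ for every $i\in\cI$, I would introduce the anti-symmetric edge function $d(e)$ equal to the value of $v$ on the left face of $e$ minus its value on the right face. In the same local picture used above, the condition $v\cdot\mu_i=0$ at each self-intersection is equivalent to the statement that $d$ is preserved as $l$ passes straight through $v_i$ along either strand --- the value of $d$ on the half-edge of that strand arriving at $v_i$ (oriented according to $l$) equals the value on the half-edge leaving $v_i$. Combined with the trivial continuity of $d$ at the unique degree-two vertex $\underline e_1$, this shows that as we traverse $l=e_1\dots e_m$ along its own direction, $d(e_k)$ is independent of $k$; hence $d(e)=c$ is constant on every edge of $\Gbb$ oriented according to $l$.

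\textbf{Conclusion.} By the very definition of winding number, $n_\mfl$ satisfies $d_{n_\mfl}(e)\equiv 1$ on every such edge, so $v-c\,n_\mfl$ has zero jump across every edge and therefore agrees on the two faces bounding any edge. Connectedness of the face-adjacency graph of $\Gbb$, inherited from that of $\Sbb_T$, then forces $v-c\,n_\mfl=\alpha\cdot 1$ for some $\alpha\in\R$, so $v\in\mfn_\mfl$. The same identity $d_{n_\mfl}\equiv 1\neq 0$ shows that $n_\mfl$ is not constant, so $\dim\mfn_\mfl=2$; combined with $\dim\R^\cF=n+2$, $\dim\mfm_\mfl\le n=|\cI|$, the orthogonality $\mfm_\mfl\perp\mfn_\mfl$, and the just-proved $\mfm_\mfl^\perp\subseteq\mfn_\mfl$, this yields the orthogonal direct sum and, as a corollary, the linear independence of the $\mu_i$.

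The main obstacle I anticipate is the local bookkeeping at each self-intersection needed to translate $v\cdot\mu_i=0$ into strand-continuity of $d$: one must carefully match the standard anti-clockwise labels $f_{k_1},\dots,f_{k_4}$ to the four quadrants cut out by the two oriented strands at $v_i$, and verify that the resulting identity is the same for both strands and is independent of the orientations the strands happen to carry. Once this conversion is set up, the propagation of $d$ along $l$ and the final identification of $v$ as an element of $\mfn_\mfl$ are immediate.
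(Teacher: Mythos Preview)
Your proposal is correct and follows essentially the same approach as the paper: both arguments verify $\mu_i\perp 1$ and $\mu_i\perp n_\mfl$ by the local face pattern at a transverse self-intersection, and both prove $\mfm_\mfl^\perp\subseteq\mfn_\mfl$ by introducing the edge jump function $d\a(j)=\a(l(j))-\a(r(j))$, showing it is constant along the traversal of $\mfl$ (the paper writes this as $d\a(j)-d\a(j+1)=\pm\mu_{i_j}^T\a=0$), and comparing with $dn_\mfl\equiv1$. The only cosmetic difference is that you package the dimension count and linear independence of the $\mu_i$ inside the lemma, whereas the paper records these as separate remarks afterward.
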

\begin{proof}
Note first that $1^T\mu_i=1-1+1-1=0$ for all $i$.
Let $i\in\cI$.
Write $k_1,k_2,k_3,k_4$ for the faces at $i$, listed anticlockwise starting from the face $k_1$ adjacent to both outgoing edges.
Then the values of $n_\mfl$ at $k_1,k_2,k_3,k_4$ are given respectively by $n,n+1,n,n-1$ for some $n$, so
$$
n_\mfl^T\mu_i=n_\mfl(k_1)-n_\mfl(k_2)+n_\mfl(k_3)-n_\mfl(k_4)=0.
$$
Hence, if $\a\in\mfm_\mfl$, then $1^T\a=0$ and $n_\mfl^T\a=0$.

Suppose on the other hand that $\a\in\mfm_\mfl^\perp$. 
Consider the $1$-forms (of the dual graph) $d\a$ and $d\nu$, given by
$$
d\a(j)=\a(l(j))-\a(r(j)),\q dn_\mfl(j)=n_\mfl(l(j))-n_\mfl(r(j)),\q j\in\cE.
$$
Then $dn_\mfl(j)=1$ for all $j$.
On the other hand, for $j=1,\dots,m-1$, there is an $i_j\in\cI$ such that $t(j)=i_j=s(j+1)$, so
$$
d\a(j)-d\a(j+1)=\pm\mu_{i_j}^T\a=0.
$$
Hence $d\a=c_1dn_\mfl$ and so $\a=c_1n_\mfl+c_2$ for some constants $c_1,c_2$.
\end{proof}

Note that $\Delta_\mfl(T)$ is convex, and that, by counting dimensions, the vectors $\{\mu_i:i\in\cI\}$ are linearly independent.
We deduce from these facts, and the preceding lemma the following proposition.

\begin{prop}\label{DELTA}
Let $a\in\Delta_\mfl(T)$ and $a'\in\overline{\Delta_\mfl(T)}$.
Set $v=a'-a$.
Then $a+tv\in\Delta_\mfl(T)$ for all $t\in[0,1)$.
Moreover, there exists $\a\in\R^\cI$ such that
$$
v=\sum_{i\in\cI}\a_i\mu_i
$$
if and only if 
$$
\sum_{k\in\cF}a_kn_\mfl(k)=\sum_{k\in\cF}a'_kn_\mfl(k).
$$
Moreover, in this case, $\a$ is uniquely determined by $v$ and
\begin{equation}
\sum_{i\in\cI}|\a_i|\le C_\mfl T
\end{equation}
for some constant $C_\mfl<\infty$ depending only on $\mfl$.
\end{prop}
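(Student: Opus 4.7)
The plan is to deduce all three assertions directly from the orthogonal decomposition $\R^\cF=\mfm_\mfl\oplus\mfn_\mfl$ established in the preceding lemma, together with the convexity of $\Delta_\mfl(T)$ and a finite-dimensional linear algebra estimate. The proposition is essentially a geometric corollary of that decomposition, so the work is in assembling the pieces cleanly rather than introducing new ideas.

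First I would dispatch the convexity claim. The set $\Delta_\mfl(T)$ is the relative interior of the simplex cut out in $\R^\cF$ by the affine constraint $\sum_k a_k = T$. For $t\in[0,1)$ we have $a+tv=(1-t)a+ta'$, a convex combination of an interior point $a\in\Delta_\mfl(T)$ with a point $a'\in\overline{\Delta_\mfl(T)}$; by a standard argument (the interior of a convex set is absorbing along any chord into the closure) this combination remains in the interior $\Delta_\mfl(T)$ for all $t\in[0,1)$.

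Next I would prove the equivalence. The existence of $\alpha$ with $v=\sum_{i\in\cI}\alpha_i\mu_i$ is the statement that $v\in\mfm_\mfl$. By the lemma, $\mfm_\mfl=\mfn_\mfl^\perp$, so this is equivalent to $\langle v,1\rangle=0$ and $\langle v,n_\mfl\rangle=0$. The first condition holds automatically, since both $a$ and $a'$ have coordinates summing to $T$, hence $\langle v,1\rangle=T-T=0$. Thus the condition reduces to $\langle a,n_\mfl\rangle=\langle a',n_\mfl\rangle$, which is exactly the winding-weighted area identity in the statement.

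Finally, for uniqueness and the bound, I use that $\{\mu_i:i\in\cI\}$ is a basis of $\mfm_\mfl$ (linear independence was noted just before the proposition by dimension counting). Uniqueness of $\alpha$ is then immediate. For the norm bound, the coordinate map $\mfm_\mfl\to\R^\cI$, $v\mapsto\alpha$, is a fixed linear isomorphism depending only on the combinatorial graph $\cG$, so
\[
\sum_{i\in\cI}|\alpha_i|\le C_\mfl\,\|v\|_2
\]
for some constant $C_\mfl<\infty$. Since $\|v\|_2\le\|v\|_1=\sum_{k\in\cF}|a_k-a'_k|\le\sum_k a_k+\sum_k a'_k=2T$, the stated bound follows after absorbing the factor $2$ into $C_\mfl$.

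There is no real obstacle here; the only mild care-point is checking that the condition $\langle v,1\rangle=0$ is automatic so that the single scalar condition on winding numbers is genuinely sufficient, and observing that the constant $C_\mfl$ can be chosen to depend only on the combinatorial data (independent of $T$) because the basis $\{\mu_i\}$ and the ambient norms depend only on $\mfl$, while the overall $T$-scaling is carried by the $\ell^1$ bound $\|v\|_1\le 2T$.
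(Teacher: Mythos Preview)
Your proposal is correct and follows exactly the approach the paper sketches: the paper simply remarks that the proposition follows from convexity of $\Delta_\mfl(T)$, linear independence of the $\mu_i$, and the orthogonal decomposition lemma, and you have filled in those details accurately. There is nothing to add.
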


\def\j{
Set
$$
n_*=\max\{n_\mfl(k)-n_\mfl(k'):k,k'\in\cF\}.
$$
Choose $k_0,k_*\in\cF$ so that $n_\mfl(k_*)-n_\mfl(k_0)=n_*$ 
and then choose the additive constant for the winding number so that $n_\mfl(k_0)=0$ and $n_\mfl(k_*)=n_*$.
For $k\in\cF$, set $p_k=n_\mfl(k)/n_*$.
Then, for $k\not=k_0,k_*$, define a vector $v_k\in\R^\cF$ by
$$
v_k(k')=
\begin{cases}
1-p_k,&\text{if $k'=k_0$,}\\
p_k,&\text{if $k'=k_*$,}\\
-1,&\text{if $k'=k$,}\\
0,&\text{otherwise.}
\end{cases}
$$
Then $1^Tv_k=0$ and $n_\mfl^Tv_k=n_*p_k-n_\mfl(k)=0$, so $v_k\in\mfm_\mfl$.
So we can write
$$
v_k=\sum_{i\in\cI}\a_{ki}\mu_i
$$
for some $\a_{ki}\in\R$. 
By dimension counting, the vectors $\mu_i$ are linearly independent, so there is a constant $C<\infty$, 
depending only on $n$, such that 
\begin{equation}\label{ALPF}
|\a_{ki}|\le C\q\text{for all $k$ and $i$}.
\end{equation}

The following proposition will allow us to move all the area to the chosen faces of extreme winding number,
using the Makeenko--Migdal equations, while staying within the simplex $\Delta_\mfl(T)$ of positive face-areas.

\begin{prop}
\label{Reduction two faces} 
For $a\in\Delta_\mfl(T)$ and $t\in[0,1]$, set
$$
v=\sum_{k\not=k_0,k_*}a_kv_k,\q a(t)=a+tv.
$$
Then there exist $\a_i\in\R$ such that
$$
v=\sum_{i\in\cI}\a_i\mu_i.
$$
Moreover, $a(t)\in\Delta_\mfl(T)$ for all $t\in[0,1)$, while
$a_k(1)=0$ for all $k\not\in\{k_0,k_*\}$ \bb and
$$
a_{k_0}(1)=\sum_{k\in\cF}(1-p_k)a_k\ge a_{k_0},\q
a_{k_*}(1)=\sum_{k\in\cF}p_ka_k\ge a_{k_*}.
$$
Morever, there is a constant $C<\infty$, depending only on $n$, such that 
\begin{equation}\label{ALPE}
\sum_{i\in\cI}|\a_i|\le C(T-a_{k_0}-a_{k_*}).
\end{equation}
UNCLEAR WE NEED MORE THAN $|\a|\le C(T,\mfl)$.
\eb
\end{prop}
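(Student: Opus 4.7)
The plan is to read off all three assertions from the orthogonal decomposition $\R^\cF = \mfm_\mfl \oplus \mfn_\mfl$ established in the preceding lemma, combined with elementary convex geometry and a dimension count.

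First, observe that $\Delta_\mfl(T)$ is the relative interior of the closed simplex $\overline{\Delta_\mfl(T)} = \{b \in \R^\cF_{\ge 0} : \sum_k b_k = T\}$. Since $a$ is interior and $a'$ lies in the closure, the open segment $\{a + tv : t \in [0,1)\} = \{(1-t)a + ta' : t \in [0,1)\}$ lies in $\Delta_\mfl(T)$. Next, the identity $1^T a = 1^T a' = T$ forces $1^T v = 0$; by the preceding lemma, $v$ then belongs to $\mfm_\mfl$ if and only if $n_\mfl^T v = 0$, which is precisely the equality of winding-weighted total areas in the statement.

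For the uniqueness and the bound, I would use a dimension count. For a regular loop $\mfl$ with $n \ge 1$ self-intersections, Euler's formula gives $|\cF| = n+2$, while $|\cI| = n$. The winding number $n_\mfl$ is non-constant (across any edge of $\Gbb$ the value of $n_\mfl$ jumps by $\pm 1$, so at least two faces have distinct winding numbers), so $1$ and $n_\mfl$ are linearly independent and $\dim \mfn_\mfl = 2$. Hence $\dim \mfm_\mfl = n = |\cI|$, and the spanning set $\{\mu_i : i \in \cI\}$ is actually a basis of $\mfm_\mfl$. In particular $\a$ is uniquely determined by $v$, and the inverse of the isomorphism $\a \mapsto \sum_i \a_i \mu_i$ is a bounded linear map: $\sum_i |\a_i| \le C_\mfl \|v\|_\infty$ for a finite constant $C_\mfl$ depending only on the combinatorial data of $\mfl$. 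Since $0 \le a_k, a'_k \le T$, one has $\|v\|_\infty \le T$, yielding $\sum_i |\a_i| \le C_\mfl T$. The case $n = 0$ is vacuous because both $\cI$ and $\mfm_\mfl$ are then trivial, and any $v$ satisfying the winding-number condition must be $0$. None of the steps here is delicate; the substantive work was already carried out in establishing the direct-sum decomposition, so no real obstacle is expected.
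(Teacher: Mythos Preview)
Your argument is essentially a proof of Proposition~\ref{DELTA}, not of the present proposition. You introduce a point $a'$ and a ``winding-weighted total areas'' condition that do not appear in this statement; here $v$ is defined concretely as $\sum_{k\ne k_0,k_*}a_kv_k$, with the $v_k$ already constructed to lie in $\mfm_\mfl$, so the existence of the $\a_i$ is immediate by linearity without any winding-number check. More importantly, several claims specific to this proposition go unaddressed: you never verify that $a_k(1)=0$ for $k\notin\{k_0,k_*\}$ nor the displayed formulas for $a_{k_0}(1)$ and $a_{k_*}(1)$ (and hence that $a(1)\in\overline{\Delta_\mfl(T)}$, which your convexity argument presupposes). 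These follow by direct computation from the definition of the $v_k$, but they must be checked.

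The more substantive gap is the bound. The proposition asserts $\sum_i|\a_i|\le C(T-a_{k_0}-a_{k_*})$, whereas your argument via boundedness of the inverse map yields only $\sum_i|\a_i|\le C\|v\|_\infty\le CT$. The paper instead takes the explicit expression $\a_i=\sum_{k\ne k_0,k_*}a_k\a_{ki}$ together with the uniform bound $|\a_{ki}|\le C$ from \eqref{ALPF}, giving $|\a_i|\le C\sum_{k\ne k_0,k_*}a_k=C(T-a_{k_0}-a_{k_*})$ directly. Your abstract route can be repaired: once the formulas for $a(1)$ are in hand, one sees that every coordinate of $v$ is bounded in modulus by $\sum_{k\ne k_0,k_*}a_k$, so $\|v\|_\infty\le T-a_{k_0}-a_{k_*}$ and the sharper bound follows. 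But as written, the proof establishes neither the endpoint formulas nor the claimed estimate.
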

\begin{proof}
It is straightforward to check that we can take
$$
\a_i=\sum_{k\not=k_0,k_*}a_k\a_{ki}.
$$
\bb
Then \eqref{ALPF} implies
$$
|\a_i|\le C\sum_{k\not=k_0,k_*}a_k=C(T-a_{k_0}-a_{k_*})
$$
for all $i$, which in turn implies \eqref{ALPE}.
\eb
\end{proof}

\bb [A:   the following is needed to consider the limit of the master field on the sphere to  the one on the plane, as one face area goes to infinity, sketched in Proposition \ref{Convergence Sphere Plane}. To be checked]   The last Lemma of this section completes the previous Proposition in the case of the plane. It further shows that up to a "Makeenko-Migdal" move,  one can fix an arbitrary face $k_\infty$ to belong 
to 
$\{k_0,k_*\},$ where $k_0,k_*$ are some  faces labels satisfying  $n_\mfl(k_*)-n_\mfl(k_0)=n.$

\begin{lem} \label{lem:MM fixing one face}  Let  us fix $k_\infty \in \mathcal{F}$ and $n_\mfl $ a winding number function such that $n_\mfl(k_\infty)=0.$ For any $a\in\Delta_\cG(T), $ there is $a'\in \Delta_\cG(T),$ such that 
$ a'. n_\mfl  $ has a constant sign and $a'-a \in \mfm_\mfl. $
\end{lem}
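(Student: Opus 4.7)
My plan is to translate the constraint $a'-a \in \mfm_\mfl$ into two scalar conservations using the orthogonal decomposition established just before Proposition~\ref{DELTA}: since $\mfn_\mfl = \spann\{1, n_\mfl\}$ is the orthogonal complement of $\mfm_\mfl$, the condition $a'-a\in\mfm_\mfl$ is equivalent to preservation of both $\sum_k a_k = T$ and
$$
S := \sum_{k\in\cF} a_k\, n_\mfl(k).
$$
Interpreting ``$a'.n_\mfl$ of constant sign'' componentwise, the lemma reduces to a combinatorial claim: produce $a'$ in the closed simplex $\overline{\Delta_\cG(T)}$ meeting those two conservations and with $(a'_k n_\mfl(k))_{k\in\cF}$ of one sign; Proposition~\ref{DELTA} then packages it back as $a'-a \in \mfm_\mfl$.

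I will handle this by an explicit construction split on the sign of $S$. Up to reversing the orientation of $l$, which flips $n_\mfl$ while keeping $n_\mfl(k_\infty)=0$, I may assume $S\ge 0$. If $S = 0$, I will place all mass at the zero-winding face, setting $a'_{k_\infty} = T$ and $a'_k=0$ otherwise; both conservations hold and the product vanishes identically. If $S>0$, positivity of $a$ forces $n_{\max} := \max_k n_\mfl(k)>0$; picking $k_*$ with $n_\mfl(k_*)=n_{\max}$, I will set
$$
a'_{k_*} = S/n_{\max}, \qquad a'_{k_\infty} = T - S/n_{\max}, \qquad a'_k=0 \text{ otherwise}.
$$
The bound $S \le T n_{\max}$, which is immediate from $a \in \Delta_\cG(T)$, guarantees that both values are nonnegative; the two conservations follow directly from $n_\mfl(k_\infty)=0$; and $a'_k n_\mfl(k)\ge 0$ pointwise, since the support of $a'$ lies in two faces on which $n_\mfl \ge 0$.

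I expect the main obstacle to be interpretive rather than technical. As written the lemma asks for $a' \in \Delta_\cG(T)$, but my construction produces an $a'$ with up to $p-2$ vanishing coordinates. Insisting on the open simplex would force the componentwise sign condition to hold at every face, which generically amounts to requiring $n_\mfl$ itself to be sign-constant, a property no $\mfm_\mfl$-move can create. I therefore read $\Delta_\cG(T)$ in the statement as the closed simplex $\overline{\Delta_\cG(T)}$, which also matches the intended application flagged in the author note (passage to the plane limit by sending one face area to infinity) and the preceding Reduction two faces proposition, with $k_\infty$ now playing the role of $k_0$ and $k_*$ the winding-extremal face.
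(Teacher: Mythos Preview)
Your proof is correct and arguably cleaner than the paper's. You exploit the orthogonal decomposition $\R^{\cF}=\mfm_\mfl\oplus\spann\{1,n_\mfl\}$ to reduce membership in $\mfm_\mfl$ to the two conservation laws $\sum_k a'_k=T$ and $\sum_k a'_k n_\mfl(k)=S$, and then solve these in one shot by concentrating all the mass on $k_\infty$ together with a single face of extremal winding number. Your handling of the sign of $S$ by symmetry is fine, and your diagnosis of the open/closed simplex ambiguity is accurate: the lemma as written cannot hold in the open simplex, and the intended reading is $\overline{\Delta_\cG(T)}$.

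The paper takes a different, iterative route. It sets
\[
r(a)=\min\bigl\{\#\{k:a_k n_\mfl(k)>0\},\ \#\{k:a_k n_\mfl(k)<0\}\bigr\}
\]
and, whenever $r(a)>0$, picks faces $k_+,k_-$ with $n_\mfl(k_+)>0>n_\mfl(k_-)$ and $a_{k_+},a_{k_-}>0$, then moves along the explicit three-face vector $v\in\mfm_\mfl$ supported on $\{k_+,k_-,k_\infty\}$ with $v(k_\pm)=\mp\,n_\mfl(k_\mp)$ and $v(k_\infty)=n_\mfl(k_+)-n_\mfl(k_-)$, scaled so as to drive one of $a_{k_\pm}$ to zero. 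This decreases $r$ by at least one and the argument terminates by induction. Like yours, each step lands on the boundary of the simplex, so the same open/closed caveat applies to the paper's proof.

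In terms of what each buys: the paper's argument exhibits the $\mfm_\mfl$-move concretely at every stage and zeros out only as many faces as necessary, which is closer in spirit to the incremental Makeenko--Migdal deformations used elsewhere in Section~\ref{MM}. Your argument is shorter, uses the structural lemma on $\mfm_\mfl^\perp$ once, and lands directly on the two-face configuration that is actually used in the application to the planar limit; it also makes the analogy with the Reduction-to-two-faces proposition explicit.
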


\begin{proof}   Let us set $r(a)=\min \{ \# \{k\in \mathcal{F}: a(k) n_\mfl(k)>0\},\# \{k\in \mathcal{F}: a(k)n_\mfl(k)<0\} \}.$ If $r=0,$ $a$ satisfies the conclusion. If $r(a)>0,$ there is $k_+,k_-\in \mathcal{F},$ with $n_{\mfl} (k_+)>0,n_{\mfl}(k_-)<0$  and $a(k_+)a(k_-)>0.$  Then, setting $v(k_-)= -n_{\mfl}(k_+),v(k_+)= n_{\mfl}(k_-), v(k_\infty)=n_{\mfl}(k_+)-n_\mfl(k_-) $ and $n_\mfl(k)=0$ for all $k\in \mathcal{F}\setminus \{k_\infty,k_-,k_+\},$ defines an element of $\mfm_\mfl.$   Moreover, denoting $s= \min\{ - \frac{a_+}{n(k_-)}, \frac{a_-}{n(k_+)}\},$ $a'= a + s v\in \Delta_\cG(T)$  with $r(a')<r(a).$ The conclusion follows by induction.  
\end{proof}
\eb
\def\jj{

Consider a loop $l\in L(\Gbb,a)$ with $a\in\Delta_\Gbb(T)$ supported on two faces $F_0,F_\infty\in\Fbb$ with $a_{F_0},a_{F_\infty}>0$.
Then $l^*$ is the boundary of a simply connected domain and this boundary has finite length.
So there exists $n\in\N$ and a simple loop $\g\in L(\Sbb_T)$ such that $\Sbb_T\sm\g^*$ has components of area $a_{F_0}$ and $a_{F_\infty}$ and $\g^n$ is a reduction of $l$.
To conclude this section, we shall use another 'move' to transform such a loop $l$ into a simple loop. 

For $n\in\N$, denote by $\mathrm{wl}_n$ the combinatorial loop winding $n$ times around a fixed point (see figure \ref{Max Winding}), 
such that the dual graph of $\Gbb_{\mathrm{wl}_n}$ is a segment.  
Let $F'_0,F'_\infty\in\Gbb_{\mathrm{wl}_n}$ be as in (\ref{max winding}).  
Then $F'_0$ and $F'_\infty)$ are the endpoints of the segment $\hat\Gbb_{\mathrm{wl}_n}$.   
A  loop $l\in L(\Sbb_T)$, with $\Gbb^l\in\Gbb_{\mathrm{wl}_n}(b)$ for some $n\in\N$ and $b\in\Delta_{\Gbb_{\mathrm{wl}_n}}(T)$ is called {\em maximally winding}. 
With this notation, any $l\in L(\Gbb,a)$, parametrized by a smooth curve, with $a$ as above, belongs also to $L(\Gbb_{\mathrm{wl}_n},a')$, 
where $n=n_{F_\infty,l}(F_0)$, $a'(F'_0)=a(F_0)$, $a'(F'_\infty)=a(F_\infty)$ and $a'(F)=0$, if $F\in\Fbb_{\mathrm{wl}_n}\sm\{F_0,F_\infty\}.$ 

\begin{prop}
\label{onions pealing} 
Let $n\in \Z$ with $n\ge1$ and let $a\in\Delta_{\mathrm{wl}_{n+1}}(T)$.
Then there exists $v\in\mfm_{\mathrm{wl}_{n+1}}$ such that $a+tv\in\Delta_{\mathrm{wl}_{n+1}}(T)$ for all $t\in[0,1]$ and $L(\Gbb_{\mathrm{wl}_{n+1}},a+v)\subset L(\Gbb_{\mathrm{wl}_n},a')$, for some $a'\in\Delta_{\mathrm{wl}_{n}}(T)$.
\end{prop}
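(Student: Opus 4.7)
The plan is to construct $v$ by exhibiting an explicit target point $a'$ on the boundary of the closed simplex $\overline{\Delta_{\mathrm{wl}_{n+1}}(T)}$ at which exactly one endpoint face of the dual path has zero area, and then to invoke Proposition~\ref{DELTA} applied to the pair $(a,a')$ to conclude that $v=a'-a$ lies in $\mfm_{\mathrm{wl}_{n+1}}$ while $a+tv$ remains in $\Delta_{\mathrm{wl}_{n+1}}(T)$ for $t\in[0,1)$.

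First I would fix the combinatorial picture. Since the dual graph of $\mathrm{wl}_{n+1}$ is a segment, its $n+2$ faces can be labelled along the dual path as $F_0,F_1,\dots,F_{n+1}$, and the winding-number function can be normalised so that $n_{\mathrm{wl}_{n+1}}(F_j)=j$. The $n$ self-intersections $v_1,\dots,v_n$ are then such that the four quadrants around $v_i$ touch, in some cyclic order, the faces $F_{i-1},F_i,F_{i+1},F_i$, the two opposite quadrants with winding $i$ coinciding because $F_i$ is the unique face carrying that winding. The key combinatorial observation is that collapsing an interior face $F_j$ with $0<j<n+1$ would merge $F_{j-1}$ with $F_{j+1}$ and destroy the segment structure of the dual graph, whereas collapsing an endpoint face $F_0$ or $F_{n+1}$ erases the adjacent self-intersection, shortens the dual path by one node, and yields precisely the combinatorial structure of $\mathrm{wl}_n$.

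Next I would select which endpoint to peel by looking at the first moment $M=\sum_{j=0}^{n+1}j\,a_{F_j}$. Since $a$ has strictly positive entries, $M/T$ lies in the open interval $(0,n+1)$. By Proposition~\ref{DELTA}, the condition $v=a'-a\in\mfm_{\mathrm{wl}_{n+1}}$ is equivalent to $\sum_j a'_{F_j}=T$ and $\sum_j j\,a'_{F_j}=M$. Imposing also $a'_{F_{n+1}}=0$ and $a'_{F_j}>0$ for $j=0,\dots,n$ is feasible precisely when $M/T<n$; imposing instead $a'_{F_0}=0$ and positivity elsewhere is feasible precisely when $M/T>1$. Since $n\ge 1$, the intervals $(0,n)$ and $(1,n+1)$ together cover $(0,n+1)$, so at least one of the two peelings is always available, and I would take $a'$ to be any point satisfying the two moment constraints together with the appropriate positivity/vanishing profile.

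Finally, Proposition~\ref{DELTA} delivers the coefficients $\alpha_i\in\R$ with $v=\sum_i\alpha_i\mu_i$ and the fact that $a+tv\in\Delta_{\mathrm{wl}_{n+1}}(T)$ for all $t\in[0,1)$, while at $t=1$ the point $a+v=a'$ lies on the facet where exactly one face has zero area. The inclusion $L(\Gbb_{\mathrm{wl}_{n+1}},a+v)\subset L(\Gbb_{\mathrm{wl}_n},a')$ is then a direct consequence of the combinatorial observation above: at $a+v$ the peeled face has zero area, its two bounding edges coincide, one self-intersection is erased, and the remaining labelled embedded graph belongs to the class $\mathrm{wl}_n$ with face-area vector $a'$ given by restricting $a+v$ to the surviving faces. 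The main obstacle I anticipate is the topological bookkeeping --- verifying in detail that removing an endpoint of the dual path of $\mathrm{wl}_{n+1}$ really does give the combinatorial class $\mathrm{wl}_n$, and that the restricted winding function agrees with $n_{\mathrm{wl}_n}$ up to an additive constant --- once this is clean, the proposition drops out essentially as a corollary of Proposition~\ref{DELTA}.
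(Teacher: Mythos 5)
Your proposal is correct, and it reaches the same endpoint as the paper's proof (collapse an endpoint face of the dual path, thereby erasing one self-intersection and landing in the class $\mathrm{wl}_n$), but the mechanism for producing $v$ is genuinely different. The paper writes down an explicit element of $\mfm_{\mathrm{wl}_{n+1}}$: it takes $\tilde v=\mu_{v_1}+\dots+\mu_{v_n}$, the sum of \emph{all} Makeenko--Migdal vectors, observes by telescoping that $\tilde v$ equals $-1$ on the two endpoint faces $F_0,F_{n+1}$, equals $+1$ on their neighbours $F_1,F_n$, and vanishes on all interior faces, and then sets $v=\min(a(F_0),a(F_{n+1}))\,\tilde v$; this decreases both extreme faces simultaneously and the smaller one hits zero at $t=1$, with no feasibility discussion needed. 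You instead prescribe the target $a'$ on a facet and invoke Proposition \ref{DELTA} to certify $v=a'-a\in\mfm_{\mathrm{wl}_{n+1}}$ via the two linear invariants $\sum_j a'_j=T$ and $\sum_j j\,a'_j=M$; the price is the little interval argument $(0,n)\cup(1,n+1)=(0,n+1)$ to decide which endpoint can be collapsed while keeping the remaining faces strictly positive, and the benefit is that your $a'$ always has exactly one vanishing face, so the surviving face-area vector genuinely lies in the open simplex $\Delta_{\mathrm{wl}_n}(T)$ (the paper's construction degenerates two faces at once when $a(F_0)=a(F_{n+1})$). Both arguments leave the final topological identification --- that collapsing an endpoint face of the spiral yields a drawing of $\mathrm{wl}_n$ --- at the same informal level, and both share the cosmetic defect that $a+v$ lies on the boundary rather than in the open simplex at $t=1$, which is an imprecision of the statement rather than of either proof.
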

\begin{proof} 
Let $v_1\in\Vbb_{\mathrm{wl}_{n+1}}$ be the vertex belonging to the boundary of $F_0$ and $v_2,\dots,v_n$ be the other vertices of $\Gbb_{\mathrm{wl}_{n+1}}$,
ordered by their time of first visit by $\mathrm{wl}_n$ starting from $v_1$. 
Then $\tilde{v}=\mu_{v_1}+\dots+\mu_{v_n}\in\mfm_{\mathrm{wl}_{n+1}}$ is such that
$$  
\tilde{v}(F)=
\left\{
\begin{array}{ll}
-1&,\text{if }F\in\{F_0,F_\infty\},\\
\;\;\; 1&   ,\text{if }\overline{F}\cap(\overline{F}_0\cup\overline{F}_\infty)\not=\emptyset,\\
\;\;\; 0& ,\text{otherwise}.
\end{array}
\right.
$$
Then $v=\min(a(F_0),a(F_\infty))\tilde{v}$ has the claimed properties.
\end{proof}

Given a combinatorial loop $\mfl$, consider the {\em double embedding} $\tilde\Gbb_\mfl$ obtained as follows.
Embed $\mfl$ in $\Sbb_T$ and replace each embedded vertex $v$ with four vertices $v_1,v_2,v_3,v_4$ lying in the four corners around $v$ in a cyclic order, 
and add four edges forming a square around $v$.
Replace each embedded edge $(v,v')$ by two edges joining the two pairs of new vertices lying in the two faces neighboring $(v,v')$ (see figure \ref{Doubling graph}).
\begin{figure}\centering 
\begin{minipage}{.5\textwidth}
  \centering
 \includegraphics[width=50 mm,height=44 mm]{Doublinggraph}
\end{minipage}%
\begin{minipage}{.5\textwidth}
  \centering
 \includegraphics[width=50 mm,height=44 mm]{DoublingMM}
\end{minipage}
\caption{\label{Doubling graph}  On the left-hand-side, the doubling of the graph $\Gbb_\mfl$ of a combinatorial loop $\mfl$ drawn in plane lines; $\tilde\mfl^{-1}$ is drawn in dashed line. 
The right-hand-side pictures the decomposition of $\cD(\mu_v)$ used in lemma \ref{lemma DoublingMM}, for the vertex $v$ of $\Gbb_\mfl$ represented by a dot.}
\end{figure}

Then $\tilde\Gbb_\mfl$ is a $4$-regular graph containing two embeddings of $\mfl$, which we will denote by $\mfl$ and $\tilde{\mfl}$, with $\tilde{\mfl}$ to the right of $\mfl$. 
By Euler's relation, 
$$
\#\tilde{\Fbb}_{\mathfrak{l}}= 2+ 4 \#\Vbb_\mathfrak{l}= 4\# \Fbb_\mathfrak{l}-6. 
$$  
There is an  injection $\iota:\Fbb_\mfl\to\tilde\Fbb_\mfl$ that identifies faces of $\Gbb_\mfl$ with the faces of $\tilde\Gbb_\mfl$ that are not fully contracted, 
when retracting $\tilde\mfl$ to $\mfl$. 
This latter induces an injective map 
$$
\cD:\R^{\Fbb_\mfl}\to\R^{\tilde\Fbb_\mfl},
$$
such that for any $h\in\R^{\Fbb_\mfl}$, and $F'\in\tilde\Fbb_\mfl$, $\cD(h)(F')=h(F)$, if $F'=\iota(F)$ for some $F\in\Fbb_\mfl$, and $0$ otherwise. 
It satisfies $\cD(X_{\Gbb_\mfl})\subset X_{\tilde\Gbb_\mfl}$ and, for any $T>0$, $\cD(\Delta_\Gbb(T))\subset\Delta_{\tilde\Gbb}(T)$. 

\begin{lem} 
\label{lemma DoublingMM}
The map $\cD$ satisfies 
$$
\cD:\mfm_l\to\mfm_{\mfl,\tilde\mfl}\cap\mfm_{\mfl,\tilde\mfl^{-1}}.
$$
\end{lem}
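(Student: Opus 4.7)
The plan is to reduce the inclusion to its generators and then to a local combinatorial check around each self-intersection of $\mfl$. Since $\mfm_\mfl = \spann\{\mu_i : i \in \cI\}$, it suffices to show that for each $i \in \cI$ one has $\cD(\mu_i) \in \mfm_{\mfl,\tilde\mfl} \cap \mfm_{\mfl,\tilde\mfl^{-1}}$. Fix such an $i$, let $v_i$ be the corresponding self-intersection of $\mfl$, and label the four faces of $\Gbb_\mfl$ around $v_i$ anticlockwise as $f_1, f_2, f_3, f_4$ with $f_1$ adjacent to both outgoing strands of $\mfl$ at $v_i$, so that $\mu_i = \partial_{f_1} - \partial_{f_2} + \partial_{f_3} - \partial_{f_4}$. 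Under the doubling, $v_i$ is replaced by a small square of $\tilde\Gbb_\mfl$ with four corner vertices $w_1, w_2, w_3, w_4$ (anticlockwise), each a transverse crossing of one $\mfl$-strand with one $\tilde\mfl$-strand. The square bounds a small central face $F^{\mathrm{sq}}_i$; between consecutive corners $w_k, w_{k+1}$ lies a thin strip face $F^{\mathrm{thin}}_{k,k+1}$ sandwiched between the doubled pair of edges leaving $v_i$ in the corresponding direction; and the unique face of $\iota(\Fbb_\mfl)$ adjacent to $w_k$ is $\iota(f_k)$.

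The heart of the argument would be to verify an explicit decomposition of the form
$$
\cD(\mu_i) \;=\; \sum_{k=1}^{4} \varepsilon_k\, \mu_{w_k}
$$
for a specific choice of signs $\varepsilon_k \in \{+1,-1\}$ dictated by the picture on the right of Figure \ref{Doubling graph}. I would expand each $\mu_{w_k}$ as a signed sum over the four faces adjacent to $w_k$, using the orientations inherited from $(\mfl,\tilde\mfl)$, and then check three cancellations: (i) the four contributions on the central face $F^{\mathrm{sq}}_i$ alternate in sign and sum to zero; (ii) each thin face $F^{\mathrm{thin}}_{k,k+1}$ is adjacent to exactly two consecutive corners $w_k$ and $w_{k+1}$, whose contributions there have opposite signs and cancel; and (iii) the surviving contribution on $\iota(f_k)$ reproduces the sign $(-1)^{k-1}$ carried by $\mu_i$ on $f_k$. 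Because every $\mu_{w_k}$ is supported on the faces touching the small square around $v_i$, this check is entirely local and no interaction between distinct self-intersections occurs.

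For the intersection with $\mfm_{\mfl,\tilde\mfl^{-1}}$, I would observe that reversing $\tilde\mfl$ swaps the incoming and outgoing directions of the $\tilde\mfl$-strand at each $w_k$. At a transverse four-valent crossing, this rotates the cyclic labelling of the four adjacent faces by one position, and hence multiplies the Makeenko--Migdal vector at $w_k$ by $-1$. The linear span of $\{\mu_{w_1},\mu_{w_2},\mu_{w_3},\mu_{w_4}\}$ in $\R^{\tilde\Fbb_\mfl}$ is therefore unchanged, and the same identity with each $\varepsilon_k$ replaced by $-\varepsilon_k$ exhibits $\cD(\mu_i) \in \mfm_{\mfl,\tilde\mfl^{-1}}$.

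The main obstacle is the sign bookkeeping in step (iii): at each corner $w_k$ one must determine which of the four adjacent faces plays the role of ``adjacent to both outgoing edges'' for the pair of strands through $w_k$, with $\mfl$ running to the left of $\tilde\mfl$. Once this convention is fixed, the cancellations on $F^{\mathrm{sq}}_i$ and on the four thin faces in steps (i) and (ii) are automatic from the alternating-sign structure of the $\mu_{w_k}$.
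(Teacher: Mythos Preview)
Your approach is the same as the paper's: reduce to the generators $\mu_i$ and decompose $\cD(\mu_i)$ locally as a linear combination of the four Makeenko--Migdal vectors at the corner vertices of the small square replacing $v_i$. The paper simply records the answer (referring to Figure~\ref{Doubling graph}):
\[
\cD(\mu_v^\mfl)
=\mu_{v_1}^{\mfl,\tilde\mfl}+\mu_{v_2}^{\mfl,\tilde\mfl}+\mu_{v_3}^{\mfl,\tilde\mfl}+\mu_{v_4}^{\mfl,\tilde\mfl}
=\mu_{v_1}^{\mfl,\tilde\mfl^{-1}}-\mu_{v_2}^{\mfl,\tilde\mfl^{-1}}+\mu_{v_3}^{\mfl,\tilde\mfl^{-1}}-\mu_{v_4}^{\mfl,\tilde\mfl^{-1}},
\]
so your ``main obstacle'' is resolved with $\varepsilon_k\equiv+1$.

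There is, however, a geometric slip in your description that affects the reversal step. The four corner vertices are \emph{not} all $\mfl$--$\tilde\mfl$ crossings. Since $\tilde\mfl$ runs parallel to $\mfl$ on its right, at a self-intersection of $\mfl$ the two $\mfl$-strands meet at one corner (the paper's $v_1$), the two $\tilde\mfl$-strands meet at the diagonally opposite corner, and only the remaining two corners are mixed $\mfl$--$\tilde\mfl$ crossings. Consequently your claim that reversing $\tilde\mfl$ multiplies every $\mu_{w_k}$ by $-1$ is false: at the $\mfl$--$\mfl$ corner nothing changes, at the $\tilde\mfl$--$\tilde\mfl$ corner both strands reverse (a rotation by two, hence no sign change), and only at the two mixed corners does the sign flip. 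This is exactly what produces the alternating signs in the paper's second expression. Your final conclusion that the span is unchanged is nevertheless correct, since each corner vector is multiplied by $\pm1$; but the route you give to it needs this correction.
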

\begin{proof} 
For $v\in\Vbb_\mfl$, let $(v_i)_{1\le i\le4}$ be the associated vertices of $\tilde\Gbb_\mfl$ in cyclic order, with $v_1$ being an self-intersection point of $\mfl$. 
Then (see figure \ref{Doubling graph}),  
$$  
\cD(\mu_v^\mfl)=\mu_{v_1}^{\mfl,\tilde\mfl}+\mu_{v_2}^{\mfl,\tilde\mfl}+\mu_{v_3}^{\mfl,\tilde\mfl}+\mu_{v_4}^{\mfl,\tilde\mfl}
=\mu_{v_1}^{\mfl,\tilde\mfl^{-1}}-\mu_{v_2}^{\mfl,\tilde\mfl^{-1}}+\mu_{v_3}^{\mfl,\tilde\mfl^{-1}}-\mu_{v_4}^{\mfl,\tilde\mfl^{-1}}.
$$
\end{proof}
}
}

\subsection{Proof of Proposition \ref{REG}}\label{PRREG}
We will show that the following statements hold for all $n\ge0$.
Firstly, {\em for all combinatorial planar loops $\mfl$ with no more than $n$ self-intersections, there is a uniformly continuous function
$$
\phi_\mfl:\Delta_\mfl(T)\to\R
$$
such that, uniformly on $\Delta_\mfl(T)$ as $N\to\infty$,}
$$
\phi_\mfl^N\to\phi_\mfl,\q\phi_{\mfl,\mfl^{-1}}^N\to(\phi_\mfl)^2.
$$
Secondly, {\em the restriction of the master field $\Phi_T$ to $L_n(\Sbb_T)$ is the unique function $L_n(\Sbb_T)\to\C$ 
with the following properties:
it is uniformly continuous, 
invariant under reduction and under under area-preserving homeomorphisms, 
satisfies the Makeenko--Migdal equations \eqref{SLMM},
and satisfies, for all simple loops $s$ and all $k\le n+1$,}
$$
\bar\Phi_T(s^k)=\phi_T(k,a_1(s),a_2(s)).
$$ 
For $a\in\Delta_\mfl(T)$ and $l\in\mfl(a)$, 
$$
\E(|\tr(H_l)-\phi_\mfl(a)|^2)=\phi_{\mfl,\mfl^{-1}}^N(a)-\phi^N_\mfl(a)^2+(\phi_\mfl^N(a)-\phi_\mfl(a))^2
$$
so the first statement implies that 
$$
\tr(H_l)\to\Phi_T(l)=\phi_\mfl(a)
$$
in $L^2$, uniformly in $l\in L_n(\Sbb_T)$.
So the two statements suffice to prove Proposition \ref{REG}.

For the simple combinatorial loop $\mfs$, set
$$
\phi_\mfs=\phi_T(1,.,.)
$$
then $\phi_\mfs$ is uniformly continuous on $\Delta_\mfs(T)$ and, by Proposition \ref{NFL},
$\phi^N_\mfs\to\phi_\mfs$ and $\phi_{\mfs,\mfs^{-1}}^N\to(\phi_\mfs)^2$ uniformly on $\Delta_\mfs(T)$. 
There are no self-intersections, so no Makeenko--Migdal equations.
For $a\in\Delta_\mfs(T)$ and $s\in\mfs(a)$,
$$
\Phi_T(s)=\phi_\mfs(a)=\phi_T(1,a).
$$
Hence the desired statements hold for $n=0$.

Let $n\ge1$ and suppose inductively that the desired statements hold for $n-1$.
Let $\mfl$ be a combinatorial planar loop with $n$ self-intersections.
Choose faces $k_0$ and $k_*$ of minimal and maximal winding number and set
$$
n_*=n_\mfl(k_*)-n_\mfl(k_0).
$$
Let $a\in\Delta_\mfl(T)$.
There exist uniquely $a_0,a_*\in[0,T]$ such that
$$
a_0+a_*=T,\q a_0n_\mfl(k_0)+a_*n_\mfl(k_*)=n_\mfl^Ta.
$$
Then, by Proposition \ref{DELTA}, there exists a unique $\a\in\R^\cI$, with 
$$
\sum_{i\in\cI}|\a_i|\le C_\mfl T
$$
such that, for
\begin{equation}\label{AOM}
a(t)=a+t\sum_{i\in\cI}\a_i\mu_i
\end{equation}
we have $a(t)\in\Delta_\mfl(T)$ for all $t\in[0,1)$ and 
$$
a_{k_0}(1)=a_0,\q a_{k_*}(1)=a_*.
$$
By Proposition \ref{MM equations}, the maps 
$$
t\mapsto\phi_\mfl^N(a(t)),\q t\mapsto\phi_{\mfl,\mfl^{-1}}^N(a(t))
$$ 
are differentiable on $[0,1)$, with
$$
\frac d{dt}\phi_\mfl^N(a(t))=\sum_{i\in\cI}\a_i\phi^N_{\mfl_i,\hat\mfl_i}(a(t))
$$
and
$$
\frac d{dt}\phi_{\mfl,\mfl^{-1}}^N(a(t))=\sum_{i\in\cI}\a_i\left(\phi^N_{\mfl_i,\hat\mfl_i,\mfl^{-1}}+\phi^N_{\mfl,\mfl_i^{-1},\hat\mfl_i^{-1}}-N^{-2}(\phi^N_{[\mfl]_i}+\phi^N_{[\hat\mfl]_i})\right)(a(t)).
$$
Here we have used the fact that the directional derivatives given by Proposition \ref{MM equations} are continuous on $\Delta_\mfl(T)$ to guarantee differentiability in any linear combination of those directions.
We integrate to obtain, for all $t\in[0,1)$,
\begin{equation}\label{LOM}
\phi_\mfl^N(a(t))=\phi_\mfl^N(a)+\sum_{i\in\cI}\int_0^t\a_i\phi^N_{\mfl_i,\hat\mfl_i}(a(s))ds
\end{equation}
and
\begin{align}
\notag
&\phi_{\mfl,\mfl^{-1}}^N(a(t))\\
\label{LON}
&=\phi_{\mfl,\mfl^{-1}}^N(a)
+\sum_{i\in\cI}\a_i\int_0^t\left(
\phi^N_{\mfl_i,\hat\mfl_i,\mfl^{-1}}+\phi^N_{\mfl,\mfl_i^{-1},\hat\mfl_i^{-1}}
-N^{-2}(\phi^N_{[\mfl]_i}+\phi^N_{[\hat\mfl]_i})\right)(a(s))ds.
\end{align}
Since $\phi_\mfl^N$ and $\phi_{\mfl,\mfl^{-1}}^N$ extend continuously to $\overline{\Delta_\mfl(T)}$
and the integrands on the right are bounded, these equations hold also for $t=1$.

Define $l:\overline{\Delta_\mfl(T)}\to L(\Sbb_T)$ as in the proof of Proposition \ref{DTC}.
Then $l(a(1))\sim s^{n_*}$ for some $s\in\mfs(a_0,a_*)$, so $H_{l(a(1))}=H_s^{n_*}$, and so
\begin{equation}\label{EDGE}
\phi_\mfl^N(a(1))=\E(\tr(H_{l(a(1))}))=\E(\tr(H_s^{n_*}))=\phi^N_{\mfs^{n_*}}(a_0,a_*).
\end{equation}
By Proposition \ref{NFL},
$$
\phi^N_{\mfs^{n_*}}(a_1,a_2)\to\phi_T(n_*,a_1,a_2)
$$
uniformly in $(a_1,a_2)\in\Delta_\mfs(T)$.
Write $l_i$ and $\hat l_i$ for the drawings of $\mfl_i$ and $\hat\mfl_i$ in $\Gbb$ for some $\Gbb\in\mfl(a)$.
Then
$$
\phi^N_{\mfl_i,\hat\mfl_i}(a)=\E(\tr(H_{l_i})\tr(H_{\hat l_i})).
$$
Since both $l_i$ and $\hat l_i$ have no more than $n-1$ self-intersections, by the inductive hypothesis, 
\begin{equation}\label{LOY}
\tr(H_{l_i})\to\phi_{\mfl_i}(a),\q \tr(H_{\hat l_i})\to\phi_{\hat\mfl_i}(a)
\end{equation}
in $L^2$, uniformly in $a\in\Delta_\mfl(T)$.
Hence
$$
\phi^N_{\mfl_i,\hat\mfl_i}\to\phi_{\mfl_i}\phi_{\hat\mfl_i}
$$
uniformly on $\Delta_\mfl(T)$.
Here we used the obvious submersions $\Delta_\mfl(T)\to\Delta_{\mfl_i}(T)$ and $\Delta_\mfl(T)\to\Delta_{\hat\mfl_i}(T)$
in evaluating $\phi_{\mfl_i}$ and $\phi_{\hat\mfl_i}$ on $\Delta_\mfl(T)$.
We let $N\to\infty$ in \eqref{LOM}, first in the case $t=1$ and then for $t\in(0,1)$ to see that 
$\phi_\mfl^N$ converges uniformly on $\Delta_\mfl(T)$ with uniformly continuous limit, $\phi_\mfl$ say, satisfying,
for all $t\in[0,1]$,
\begin{equation}\label{LOQ}
\phi_\mfl(a(t))=\phi_\mfl(a)+\sum_{i\in\cI}\int_0^t\a_i\phi_{\mfl_i}(a(s))\phi_{\hat\mfl_i}(a(s))ds.
\end{equation}
Now, by the argument leading to \eqref{EDGE},
$$
\phi_{\mfl,\mfl^{-1}}^N(a(1))=\phi^N_{\mfs^n,\mfs^{-n}}(a_0,a_*)
$$
and, by Proposition \ref{NFL}, for $s\in\mfs(a_1,a_2)$,
$$
\phi^N_{\mfs^n,\mfs^{-n}}(a_1,a_2)=\E(|\tr(H^n_s)|^2)\to\phi_T(n,a_1,a_2)^2
$$
uniformly in $(a_1,a_2)\in\Delta_\mfs(T)$ as $N\to\infty$.
We have
$$
\phi^N_{\mfl_i,\hat\mfl_i,\mfl^{-1}}(a)=\phi^N_{\mfl,\mfl_i^{-1},\hat\mfl_i^{-1}}(a)
=\E(\tr(H_{l_i})\tr(H_{\hat l_i})\tr(H_{l^{-1}}))
$$
and we have just shown that
$$
\E(\tr(H_{l^{-1}}))=\E(\tr(H_l))\to\phi_\mfl(a)
$$
uniformly in $a\in\Delta_\mfl(T)$. 
In combination with \eqref{LOY}, we deduce that, uniformly on $\Delta_\mfl(T)$,
$$
\phi^N_{\mfl_i,\hat\mfl_i,\mfl^{-1}}\to\phi_{\mfl_i}\phi_{\hat\mfl_i}\phi_\mfl.
$$
Hence, on letting $N\to\infty$ in \eqref{LON}, first in the case $t=1$ and then for $t\in(0,1)$, 
we see that $\phi^N_{\mfl,\mfl^{-1}}$ converges uniformly on $\Delta_\mfl(T)$ with uniformly continuous limit, 
$\phi_{\mfl,\mfl^{-1}}$ say, satisfying, for all $t\in[0,1]$,
\begin{equation}
\label{LOT}
\phi_{\mfl,\mfl^{-1}}(a(t))=\phi_{\mfl,\mfl^{-1}}(a)
+2\sum_{i\in\cI}\a_i\int_0^t\phi_{\mfl_i}(a(s))\phi_{\hat\mfl_i}(a(s))\phi_\mfl(a(s))ds.
\end{equation}
By differentiating \eqref{LOQ} and \eqref{LOT}, we see that 
$$
\frac d{dt}\left(\phi_{\mfl,\mfl^{-1}}(a(t))-\phi_\mfl(a(t))^2\right)=0
$$
so
$$
\phi_{\mfl,\mfl^{-1}}(a)-\phi_\mfl(a)^2=\phi_{\mfl,\mfl^{-1}}(a(1))-\phi_\mfl(a(1))^2=0.
$$
Thus the first of the desired statements holds for $n$.

We turn to the second statement.
First we will show the claimed properties of the master field $\Phi_T$ on $L_n(\Sbb_T)$.
By the first statement, for all $a\in\Delta_\mfl(T)$ and $l\in\mfl(a)$,
$$
\Phi_T(l)=\phi_\mfl(a).
$$
Hence $\Phi_T$ is invariant under area-preserving homeomorphisms.
Since $\phi_\mfl$ is uniformly continuous on $\Delta_\mfl(T)$, the inequality \eqref{MC} ensures that $\Phi_T$ is uniformly continuous on $L_n(\Sbb_T)$.
For $l_1,l_2\in L(\Sbb_T)$ with $l_1\sim l_2$, we have $H_{l_1}=H_{l_2}$, so $\Phi_T^N(l_1)=\Phi_T^N(l_2)$, 
and so, if $l_1,l_2\in\overline{L_n(\Sbb_T)}$, then
$$
\bar\Phi_T(l_1)=\lim_{N\to\infty}\Phi_T^N(l_1)=\lim_{N\to\infty}\Phi_T^N(l_2)=\bar\Phi_T(l_2).
$$
We used here the fact that $\Phi_T^N\to\Phi_T$ uniformly on $L_n(\Sbb_T)$.
Hence $\Phi_T$ is invariant under reduction.
By Proposition \ref{NFL}, for $a\in\Delta_\mfs(T)$, $s\in\mfs(a)$ and $k\le n+1$, 
$$
\bar\Phi_T(s^k)=\lim_{N\to\infty}\Phi_T^N(s^k)=\phi_T(k,a).
$$
It remains to show that $\Phi_T$ satisfies the Makeenko--Migdal equations \eqref{SLMM} on $L_n(\Sbb_T)$.
Let $l$ be a regular loop with $n$ self-intersections.
Let $i\in\cI$ and let $\th:[0,\eta)\times\Sbb_T\to\Sbb_T$ be a Makeenko--Migdal flow at $(l,v_i)$.
Write $a_\th(t)$ for the face-area vector of $l(t)=\th(t,l)$. 
Then
$$
a_\th(t)=a+t\mu_i
$$
so, by the argument leading to \eqref{LOQ},
$$
\E(\tr(H_{l(t)}))=\E(\tr(H_l))+\int_0^t\E(\tr(H_{l_i(s)})\tr(H_{\hat l_i(s)}))ds.
$$
By bounded convergence, on letting $N\to\infty$, we obtain
$$
\Phi_T(l(t))=\Phi_T(l)+\int_0^t\Phi_T(l_i(s))\Phi_T(\hat l_i(s)))ds
$$
as required.

Suppose finally that $\Psi:L_n(\Sbb_T)\to\C$ is another function with the same properties.
We have to show that $\Psi=\Phi_T$ on $L_n(\Sbb_T)$.
Given a combinatorial planar loop $\mfl$ with at most $n$ self-intersections, define a function
$$
\psi_\mfl:\Delta_\mfl(T)\to\C
$$
by
$$
\psi_\mfl(a)=\Psi(l(a))
$$
where $l(a)$ is constructed as in the proof of Proposition \ref{DTC}.
Then $\psi_\mfl$ is uniformly continuous and $\Psi(l)=\psi_\mfl(a)$ for all $a\in\Delta_\mfl(T)$ and all $l\in\mfl(a)$.
Given $a\in\Delta_\mfl(T)$ and a self-intersection $i$ of $\mfl$, choose $l\in\mfl(a)$ let $\th$ be a Makeenko--Migdal flow at $(l,v_i)$.
Then
$$
\psi_\mfl(a+t\mu_i)=\Psi(\th(t,l))
$$
so, since $\Psi$ satisfies the Makeenko--Migdal equations, $\psi_\mfl$ has a directional derivative given by
$$
\mu_i\psi_\mfl(a)=\left.\frac d{dt}\right|_{t=0}\Psi(\th(t,l))=\Psi(l_i)\Psi(\hat l_i)=\psi_{\mfl_i}(a)\psi_{\hat\mfl_i}(a)
$$
where $l_i,\hat l_i$ are the loops obtained by splitting $l$ at $v_i$, and $\mfl_i,\hat\mfl_i$ are the associated combinatorial loops.

Given $a\in\Delta_\mfl(T)$, define $a(t)$ as at \eqref{AOM}.
Then, by the argument leading to \eqref{LOM}, for all $t\in[0,1)$,
$$
\psi_\mfl(a(t))=\psi_\mfl(a)+\sum_{i\in\cI}\a_i\int_0^t\psi_{\mfl_i}(a(s))\psi_{\hat\mfl_i}(a(s))ds
$$
and, on letting $t\to1$, we obtain
$$
\bar\Psi(l(a(1)))=\Psi(l(a))+\sum_{i\in\cI}\a_i\int_0^1\Psi(l_i(a(s)))\Psi(\hat l_i(a(s)))ds.
$$
Now the same equation holds for $\Phi_T$ and
$$
\bar\Psi(l(a(1)))=\bar\Psi(s^{n_*})=\phi_T(n_*,a(1))=\bar\Phi_T(l(a(1)))
$$
and, by the inductive hypothesis, since $\mfl_i$ and $\hat\mfl_i$ have no more that $n-1$ self-intersections,
$$
\Psi(l_i(a(s)))=\Phi_T(l_i(a(s))),\q \Psi(\hat l_i(a(s)))=\Phi_T(\hat l_i(a(s))).
$$
Hence $\Psi(l(a))=\Phi_T(l(a))$, showing that $\Psi=\Phi_T$ on $L_n(\Sbb_T)$, as required.
Hence both statements hold for $n$ and the induction proceeds.

\section{Extension to loops of finite length}\label{RL}
\subsection{Some estimates for piecewise geodesic loops}
Our aim in this section is to prove Proposition \ref{ALL}, which is the final step in the proof of our main result Theorem \ref{main}.
For this it is convenient to work with piecewise geodesics.
We will need some associated estimates for the master field and its approximations, which we now develop.
Write $P_*(\Sbb_T)$ and $L_*(\Sbb_T)$ for the sets of piecewise geodesic paths and loops in $\Sbb_T$.
The sphere $\Sbb_T$ has positive injectivity radius $\k=\sqrt{\pi T}/2$.
For $\a\in P(\Sbb_T)$, write $n_0(\a)$ for the smallest integer such that $2^{-n_0(\a)}\ell(\a)<\k$.
For $n\ge n_0(\a)$, we define $D_n(\a)\in P_*(\Sbb_T)$ by parametrizing $\a$ by $[0,1]$ at constant speed and then interpolating the points $(\a(k2^{-n}):k=0,1,\dots,2^n)$ by geodesics.
Then $D_n(\a)\to\a$ in length as $n\to\infty$, so $P_*(\Sbb_T)$ is dense in $P(\Sbb_T)$ for the topology of convergence in length with fixed endpoints.
Note in particular that, when $\ell(\a)<\k$, we use the notation $D_0(\a)$ for the unique geodesic with the same endpoints as $\a$.
Define, for loops $\a\in L(\Sbb_T)$,
$$
\Psi_N(\a)=\sqrt{1-\Phi_T^N(\a)},\q \Phi_T^N(\a)=\E(\tr(H_\a))
$$
where $H=(H_\g:\g\in P(\Sbb_T))$ is a Yang--Mills holonomy field in $U(N)$.
It is straightforward to check, using the Cauchy--Schwarz inequality, that
\begin{equation}\label{PSA}
\Psi_N(\a\b)\le\Psi_N(\a)+\Psi_N(\b)
\end{equation}
whenever $\a$ and $\b$ have the same base point.
Also, by a standard estimate for the Brownian bridge, there is a constant $K_1<\infty$ such that, for all $N$ and all $a\in[0,T]$, for all simple loops $\a$ bounding a domain of area $a$,
\begin{equation}\label{PSB}
\Psi_N(\a)\le K_1\sqrt a.
\end{equation}
Moreover $\Psi_N$ inherits from $H$ the following properties: for all $\a,\b\in L(\Sbb_T)$ with $\a\sim\b$,
\begin{equation}\label{PSC}
\Psi_N(\a)=\Psi_N(\a^{-1}),\q\Psi_N(\a)=\Psi_N(\b)
\end{equation}
and, for all pairs of paths $\g_1,\g_2\in P(\Sbb_T)$ which concatenate to form a loop,
\begin{equation}\label{PSD}
\Psi_N(\g_1\g_2)=\Psi_N(\g_2\g_1).
\end{equation}
For $\a\in L_*(\Sbb_T)$, we have $\a\in\overline{L_n(\Sbb_T)}$ for some $n$ so, by Proposition \ref{REG}, we can define
$$
\Psi(\a)=\lim_{N\to\infty}\Psi_N(\a)=\sqrt{1-\Phi_T(\a)}.
$$
On letting $N\to\infty$, we see that the properties \eqref{PSA},\eqref{PSB},\eqref{PSC},\eqref{PSD} hold also for $\Psi$ on $L_*(\Sbb_T)$.
We note for later use a further inequality which follows from \eqref{PSA},\eqref{PSC},\eqref{PSD}: for all $\a,\b\in L_*(\Sbb_T)$,
\begin{equation}\label{PSE}
|\Psi(\a)-\Psi(\b)|\le\Psi(\a\b^{-1}).
\end{equation}

We now follow a line of argument which is adapted from \cite[Section 3.3]{MR2667871} where it is presented in more detail.
See also \cite[Theorem 4.1]{1601.00214}.
In particular, we will use the following isoperimetric inequality \cite[Lemma 3.3.5]{MR2667871}:
{\em there is a constant $K_2\in[\kappa^{-1},\infty)$ such that, for all $a\in[0,T]$ and all $\a\in P_*(\Sbb_T)$ of length $\ell(\a)<K_2^{-1}$ and such that the loop $s=\a^{-1}D_0(\a)$ is simple, we have
\begin{equation}\label{IPJ}
\sqrt a\le K_2\ell(\a)^{3/4}(\ell(\a)-\ell(D_0(\a)))^{1/4}
\end{equation}
where $a$ is the smaller of the areas of the connected components of $\Sbb_T\sm s^*$.}
The next proposition is a reformulation of \cite[Lemma 3.3.4]{MR2667871}.

\begin{prop}\label{CL}
There is a constant $K\in[\k^{-1},\infty)$ such that, for all $N\in\N$, all $n\ge0$ and 
all $\a\in P(\Sbb_T)$ with $2^{-n}\ell(\a)<K^{-1}$, we have
$$
\Psi_N(\a D_n(\a)^{-1})\le K\ell(\a)^{3/4}(\ell(\a)-\ell(D_n(\a)))^{1/4}.
$$
Moreover the same estimate holds for $\Psi$ whenever $\a\in P_*(\Sbb_T)$.
\end{prop}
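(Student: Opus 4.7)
The plan is to adapt L\'evy's argument in \cite[Section 3.3]{MR2667871}: I decompose $\a D_n(\a)^{-1}$ into $2^n$ small loops, one per dyadic subinterval, bound each via the isoperimetric estimate \eqref{IPJ}, and combine by H\"older's inequality. Parametrize $\a$ by $[0,1]$ at constant speed and set $\a_k=\a|_{[(k-1)2^{-n},k2^{-n}]}$ and $g_k=D_0(\a_k)$, so that $\a=\a_1\cdots\a_{2^n}$, $D_n(\a)=g_1\cdots g_{2^n}$, and $\ell(\a_k)=2^{-n}\ell(\a)$. Writing $p_k=g_1\cdots g_{k-1}$, the partial loops $w_k=(\a_1\cdots\a_k)(g_1\cdots g_k)^{-1}$ satisfy $w_k=w_{k-1}\cdot p_k(\a_k g_k^{-1})p_k^{-1}$, so by \eqref{PSC} and \eqref{PSD} the conjugating factor $p_k$ can be erased up to reduction and cyclic rearrangement. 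Iterating \eqref{PSA} then yields
$$
\Psi_N(\a D_n(\a)^{-1})=\Psi_N(w_{2^n})\le\sum_{k=1}^{2^n}\Psi_N(\a_k g_k^{-1}).
$$

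Next I would bound each $\Psi_N(\a_k g_k^{-1})$. Choosing $K$ large enough that $2^{-n}\ell(\a)<K^{-1}$ forces both $\a_k$ and $g_k$ to lie in a common geodesic ball of radius less than $\k/2$, the loop $\a_k g_k^{-1}$ is contained in a topological disk. After approximating $\a_k$ in length by $D_m(\a_k)$ and using continuity of $\Psi_N$ in length (which follows from property (c) of a Yang--Mills holonomy field together with boundedness of $\tr$), one may assume $\a_k g_k^{-1}\in L_*(\Sbb_T)$. If $\a_k g_k^{-1}$ is simple, then \eqref{PSB} combined with \eqref{IPJ} gives
$$
\Psi_N(\a_k g_k^{-1})\le K_1\sqrt{a_k}\le K_1 K_2\,\ell(\a_k)^{3/4}\bigl(\ell(\a_k)-\ell(g_k)\bigr)^{1/4}.
$$
Otherwise I would decompose $\a_k g_k^{-1}$ into finitely many simple sub-loops by successively splitting at self-intersection points, as in \cite[Lemma 3.3.4]{MR2667871}; applying \eqref{PSB} and \eqref{IPJ} to each sub-loop using the appropriate chord-geodesic and controlling the resulting sum by the total length and total length-excess of the pieces, one recovers the same bound up to a fixed multiplicative constant.

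Summing over $k$ and applying H\"older's inequality with exponents $4/3$ and $4$ gives
$$
\sum_{k=1}^{2^n}\ell(\a_k)^{3/4}\bigl(\ell(\a_k)-\ell(g_k)\bigr)^{1/4}\le\ell(\a)^{3/4}\bigl(\ell(\a)-\ell(D_n(\a))\bigr)^{1/4},
$$
and choosing $K$ to absorb the factors $K_1 K_2$ and $\k^{-1}$ delivers the desired bound for $\Psi_N$. For $\a\in P_*(\Sbb_T)$, the loop $\a D_n(\a)^{-1}$ is piecewise geodesic and so belongs to $\overline{L_m(\Sbb_T)}$ for some $m$; Proposition \ref{REG} then lets me pass to the limit $N\to\infty$ and obtain the same estimate for $\Psi$.

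The main obstacle is the non-simple case in the piecewise bound, because \eqref{IPJ} is stated only for simple loops equipped with a distinguished geodesic chord. The remedy is a careful planar decomposition of $\a_k g_k^{-1}$ into simple sub-loops lying in the same small disk, organised so that the arcs of $\a_k$ and of $g_k$ cut into matching pieces; this is essentially two-dimensional and works in a local chart at scale $2^{-n}\ell(\a)$, which is the content of L\'evy's combinatorial lemma.
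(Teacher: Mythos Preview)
Your overall architecture matches the paper's exactly: reduce to $\a\in P_*(\Sbb_T)$ by continuity of $\Psi_N$, split $\a D_n(\a)^{-1}$ into $2^n$ conjugate lassos $\a_k D_0(\a_k)^{-1}$, bound each piece, and sum via H\"older with exponents $(4/3,4)$. The passage to $\Psi$ via Proposition~\ref{REG} is also the same.

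Where the paper is more explicit is the ``non-simple'' piecewise bound, and here your description ``successively splitting at self-intersection points'' is not quite the right mechanism. A generic splitting into simple sub-loops only gives $\Psi_N(s)\le K_1K_2\,\ell(s)$ for each piece, and there is no reason the sum of lengths should be controlled by $\ell(\a_k)^{3/4}(\ell(\a_k)-\ell(g_k))^{1/4}$ when the length excess is small. The paper handles this in two stages. First, for \emph{injective} $\a_k$, one uses the specific lasso decomposition from \cite[Lemma 3.3.5]{MR2667871}: $\a_k D_0(\a_k)^{-1}\sim l_1\cdots l_p$ with each $l_j=\g_j(\a_{k,j}D_0(\a_{k,j})^{-1})\g_j^{-1}$ simple or trivial, and with $\sum_j\ell(\a_{k,j})=\ell(\a_k)$ and $\sum_j\ell(D_0(\a_{k,j}))=\ell(D_0(\a_k))$. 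This additivity is precisely what makes \eqref{IPJ} plus H\"older produce the $3/4,1/4$ exponents. Second, for \emph{general} $\a_k$, one first lasso-decomposes $\a_k\sim l_1\cdots l_p\g$ with $\g$ injective and $\sum_j\ell(s_j)=\ell(\a_k)-\ell(\g)$; the lasso contribution is bounded crudely by $K(\ell(\a_k)-\ell(\g))$, which is already $\le K\ell(\a_k)^{3/4}(\ell(\a_k)-\ell(\g))^{1/4}$, and the injective remainder $\g$ is handled by the first stage with $D_0(\g)=D_0(\a_k)$. Your final paragraph gestures at the right structure (``arcs of $\a_k$ and of $g_k$ cut into matching pieces''), but the two-stage argument is what actually delivers the exponents; it would be worth spelling this out rather than leaving it to the reference.
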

\begin{proof}
The argument relies only on the properties \eqref{PSA},\eqref{PSB},\eqref{PSC},\eqref{PSD}, which hold for both $\Psi_N$ and $\Psi$, and the continuity of $\Psi_N$ on $L(\Sbb_T)$, 
which allows us to reduce to the case $\a\in P_*(\Sbb_T)$.
We will write it out for $\Psi$.
Consider first the case where $\a$ is injective, with $\ell(\a)<\k$.
Then (see \cite[Lemma 3.3.5]{MR2667871}) there is a lasso decomposition 
$$
\a D_0(\a)^{-1}\sim l_1\dots l_p,\q l_i=\g_is_i\g_i^{-1},\q s_i=\a_iD_0(\a_i)^{-1}
$$ 
where $s_i\in L_*(\Sbb_T)$ and $\g_i\in P_*(\Sbb_T)$ for all $i$, and where either $s_i$ is simple or $\a_i=D_0(\a_i)$, and such that
$$
\ell(\a)=\ell(\a_1)+\dots+\ell(\a_p),\q \ell(D_0(\a))=\ell(D_0(\a_1))+\dots+\ell(D_0(\a_p)).
$$
Write $a_i$ for the smaller of the areas of the connected components of $\Sbb_T\sm s_i^*$.
In the case $\a_i=D_0(\a_i)$, when there is only one such component, set $a_i=0$.
Note that $\ell(s_i)\le2\ell(\a_i)\le2\ell(\a)$.
Take $K=\max\{K_1K_2,2K_2\}$ and suppose that $\ell(\a)<K^{-1}$.
Then
\begin{align*}
\Psi(\a D_0(\a)^{-1})
&=\Psi(l_1\dots l_p)\le\Psi(l_1)+\dots+\Psi(l_p)=\Psi(s_1)+\dots+\Psi(s_p)\\
&\le K_1({\sqrt a_1}+\dots+{\sqrt a_p})\\
&\le K_1K_2\sum_i\ell(\a_i)^{3/4}(\ell(\a_i)-\ell(D_0(\a_i)))^{1/4}\\
&\le K\ell(\a)^{3/4}(\ell(\a)-\ell(D_0(\a)))^{1/4}
\end{align*}
where we used H\"older's inequality for the last step.

Now, for general $\a\in P_*(\Sbb_T)$ with $\ell(\a)<K^{-1}$, there is a lasso decomposition 
$$
\a\sim l_1\dots l_p\g,\q l_i=\g_i s_i\g_i^{-1},\q \ell(\a)=\ell(s_1)+\dots+\ell(s_p)+\ell(\g)
$$
where $s_i\in L_*(\Sbb_T)$ is simple and $\g_i\in P_*(\Sbb_T)$ for all $i$, and where $\g\in P_*(\Sbb_T)$ is injective.
Write $a_i$ for the smaller of the areas of the connected components of $\Sbb_T\sm s_i^*$.
Then 
$$
\Psi(l_i)=\Psi(s_i)\le K_1\sqrt{a_i}\le K_1K_2\ell(s_i)
$$
so
\begin{align*}
\Psi(l_1)+\dots+\Psi(l_p)\le K(\ell(s_1)+\dots+\ell(s_p))=K(\ell(\a)-\ell(\g)).
\end{align*}
On the other hand, by the first part,
$$
\Psi(\g D_0(\g)^{-1})\le K\ell(\g)^{3/4}(\ell(\g)-\ell(D_0(\g)))^{1/4}.
$$
But $D_0(\g)=D_0(\a)$, so
\begin{align*}
\Psi(\a D_0(\a)^{-1})
&=\Psi(l_1\dots l_p\g D_0(\g)^{-1})\le\Psi(l_1)+\dots+\Psi(l_p)+\Psi(\g D_0(\g)^{-1})\\
&\le K(\ell(\a)-\ell(\g))+K\ell(\g)^{3/4}(\ell(\g)-\ell(D_0(\g)))^{1/4}\\
&\le K\ell(\a)^{3/4}(\ell(\a)-\ell(D_0(\a)))^{1/4}.
\end{align*}

Finally, for $n\ge0$ and $\a\in P_*(\Sbb_T)$ with $2^{-n}\ell(\a)<K^{-1}$, we can write $\a$ as a concatenation $\a_1\dots\a_{2^n}$ such that
$$
D_n(\a)=D_0(\a_1)\dots D_0(\a_{2^n}),\q \ell(\a_i)=2^{-n}\ell(\a).
$$
Then there is a lasso decomposition
$$
\a D_n(\a)^{-1}\sim l_1\dots l_{2^n},\q l_i=\g_i\a_i D_0(\a_i)^{-1}\g_i^{-1}
$$
where $\g_i\in L_*(\Sbb_T)$ for all $i$.
Then, by the second part,
\begin{align*}
\Psi(\a D_n(\a)^{-1})&=\Psi(l_1\dots l_{2^n})\le\sum_i\Psi(\a_iD_0(\a_i)^{-1})\\
&\le\sum_iK\ell(\a_i)^{3/4}(\ell(\a_i)-\ell(D_0(\a_i))^{1/4}\\
&\le K\ell(\a)^{3/4}(\ell(\a)-\ell(D_n(\a))^{1/4}.
\end{align*}
\end{proof}

\subsection{Proof of Proposition \ref{ALL}}
Let $(H_\g:\g\in P(\Sbb_T))$ be a Yang--Mills holonomy field in $U(N)$.
We have to show that $\tr(H_l)$ converges in probability as $N\to\infty$ for all $l\in L(\Sbb_T)$.
We have to show further that the master field 
$$
\Phi_T(l)=\lim_{N\to\infty}\E(\tr(H_l)),\q l\in L(\Sbb_T)
$$ 
is the unique continuous function $L(\Sbb_T)\to\C$ which is invariant under reduction and under area-preserving homeomorphisms, satisfies the Makeenko--Migdal equations \eqref{SLMM} on regular loops, and satisfies \eqref{SLD} for simple loops.

Let $l\in L(\Sbb_T)$ and set $l_n=D_n(l)$.
Note that $D_n(l_m)=l_n$ when $m\ge n$.
By \eqref{PSE} and Proposition \ref{CL}, for $n\in\N$ sufficiently large and $m\ge n$, 
$$
|\Psi(l_m)-\Psi(l_n)|\le\Psi(l_ml_n^{-1})\le K\ell(l)^{3/4}(\ell(l)-\ell(l_n))^{1/4}.
$$
Also
$$
\E(|\tr(H_{l_n})-\tr(H_l)|^2)\le\E(\tr((H_{l_n}-H_l)(H_{l_n}-H_l)^*))=2\Psi_N(ll_n^{-1})^2
$$
so
$$
\|\tr(H_{l_n})-\tr(H_l)\|_2\le\sqrt2\Psi_N(ll_n^{-1})\le\sqrt2K\ell(l)^{3/4}(\ell(l)-\ell(l_n))^{1/4}.
$$
Since $\ell(l_n)\to\ell(l)$ as $n\to\infty$, we see that $\Psi(l_n)$ and $\Phi_T(l_n)=1-\Psi(l_n)^2$ must converge as $n\to\infty$.
Define
$$
\tilde\Phi(l)=\lim_{n\to\infty}\Phi_T(l_n).
$$
Let $n\to\infty$ and then $N\to\infty$ in the inequality
$$
\|\tr(H_l)-\tilde\Phi(l)\|_1\le\|\tr(H_l)-\tr(H_{l_n})\|_1+\|\tr(H_{l_n})-\Phi_T(l_n)\|_1+|\Phi_T(l_n)-\tilde\Phi_T(l)|
$$
to see that $\tr(H_l)\to\tilde\Phi(l)$ in probability and $\Phi_T(l)=\lim_{N\to\infty}\E(\tr(H_l))=\tilde\Phi(l)$.

The invariance of $\Phi_T$ on $L(\Sbb_T)$ under reduction and area-preserving homeomorphisms
follows from the corresponding invariance properties of $\Phi_T^N$.
The claimed properties of $\Phi_T$ on simple and regular loops were shown in Propositions \ref{NFL} and \ref{REG}.
We now show that $\Phi_T$ is continuous on $L(\Sbb_T)$.
For this, we translate to our context the argument of \cite[Proposition 3.3.9]{MR2667871}.
Let $\a\in L(\Sbb_T)$ and let $(\a_n:n\in\N)$ be a sequence in $L(\Sbb_T)$ which converges to $\a$ in length.
We have to show that $\Phi_T(\a_n)\to\Phi_T(\a)$.
There exist area-preserving homeomorphisms $\th_n$ on $\Sbb_T$ such that $\th_n(\a_n)$ converges to $\a$ in length with fixed endpoints.
We have $\Phi_T(\a)=1-\Psi(\a)^2$ and we know that $\Psi(D_m(\a_n))\to\Psi(\a_n)$ as $m\to\infty$. 
Hence it will suffice to consider the case where $\a_n$ is piecewise geodesic for all $n$ and $\a_n$ converges to $\a$ in length with fixed endpoints,
and to show then that $\Psi(\a_n)\to\Psi(\a)$ as $n\to\infty$.
Parametrize $\a$ at constant speed and choose parametrizations for the loops $\a_n$ so that
$$
\|\a_n-\a\|_\infty=\sup_{t\in[0,1]}|\a_n(t)-\a(t)|\to0.
$$
Fix $m\ge0$ and write $D_m(\a)$ and $\a_n$ as concatenations
$$
D_m(\a)=\s_1\dots\s_{2^m},\q \a_n=\a_{n,1}\dots\a_{n,2^m}
$$
where $\s_i$ is the geodesic from $\a((i-1)2^{-m})$ to $\a(i2^{-m})$ and $\a_{n,i}$ is the restriction of $\a_n$ to $[(i-1)2^{-m},i2^{-m}]$.
For $i=0,1,\dots,2^m$, denote by $\eta_{n,i}$ the geodesic from $\a(i2^{-m})$ to $\a_n(i2^{-m})$.
Then $\ell(\eta_{n,0})=\ell(\eta_{n,2^m})=0$ and, for $i=1,\dots,2^m-1$,
$$
\ell(\eta_{n,i})\le\|\a_n-\a\|_\infty.
$$
Set
$$
\b_n=\b_{n,1}\dots\b_{n,2^m},\q \b_{n,i}=\eta_{n,i-1}\a_{n,i}\eta_{n,i}^{-1}.
$$
Then $\a_n\sim\b_n$ and $D_0(\b_{n,i})=\s_i$ for all $i$.
So
$$
\Psi(\a_nD_m(\a)^{-1})=\Psi(\b_nD_m(\a)^{-1})
$$
and, by the argument used in the last part of the proof of Proposition \ref{CL},
$$
\Psi(\b_nD_m(\a)^{-1})\le K\ell(\b_n)^{3/4}(\ell(\b_n)-\ell(D_m(\a))^{1/4}.
$$
Now
$$
\ell(\b_n)\le\ell(\a_n)+2^{m+1}\|\a_n-\a\|_\infty
$$
so
\begin{align*}
&|\Psi(\a_n)-\Psi(\a)|\le\Psi(\a_nD_m(\a)^{-1})
+|\Psi(D_m(\a))-\Psi(\a)|\\
&\q\q\le K(\ell(\a_n)+2^{m+1}\|\a_n-\a\|_\infty)^{3/4}(\ell(\a_n)-\ell(D_m(\a))+2^{m+1}\|\a_n-\a\|_\infty)^{1/4}\\
&\q\q\q\q\q\q+|\Psi(D_m(\a))-\Psi(\a)|.
\end{align*}
On letting first $n\to\infty$ and then $m\to\infty$, we see that $\Psi(\a_n)\to\Psi(\a)$ as required.

Finally, suppose that $\Psi:L(\Sbb_T)\to\C$ is another function with the same
properties.
For each combinatorial planar loop $\mfl$, define a function
$$
\psi_\mfl:\overline{\Delta_\mfl(T)}\to\C
$$
by
$$
\psi_\mfl(a)=\Psi(l(a))
$$
where $l(a)$ is chosen as in Proposition \ref{DTC}. 
Since $\Psi$ and $a\mapsto l(a)$ are continuous, so is $\psi_\mfl$.
Then, since $\overline{\Delta_\mfl(T)}$ is compact, $\psi_\mfl$ is
uniformly continuous.
But $\Psi(l)=\psi_\mfl(a)$ for all $a\in\Delta_\mfl(T)$ and all $l\in\mfl(a)$,
so the inequality \eqref{MC} shows that $\Psi$ is uniformly continuous on $L_n(\Sbb_T)$ for all $n$.
Then, by Proposition \ref{REG}, $\Psi=\Phi_T$ on regular loops.
But regular loops are dense in $L(\Sbb_T)$, so $\Psi=\Phi_T$ on $L(\Sbb_T)$ by continuity.

\section{Further properties of the master field \label{Prop MF}}
\subsection{Relation with the Hermitian Brownian loop}
Let $W=(W_t:t\ge0)$ be a Brownian motion in the set of $N\times N$ Hermitian matrices $H(N)$ equipped with the inner product
$$
\<h_1,h_2\>=N\Tr(h_1h_2^*).
$$
Let $w=(w_t:t\ge0)$ be a free Brownian motion, defined on some non-commutative probability space $(\cA,\t)$. 
The inner product is scaled with $N$ so that $W$ converges in non-commutative distribution (in probability) to $w$, that is to say, for all $n\in\N$ and all $t_1,\dots,t_n\ge0$,
$$
\tr(W_{t_1}\dots W_{t_n})\to\t(w_{t_1}\dots w_{t_n})
$$
in probability as $N\to\infty$.
Fix $T\in(0,\infty)$ and define for $t\in[0,T]$
$$
B_t=W_t-\tfrac{t}TW_T,\q b_t=w_t-\tfrac{t}Tw_T.
$$
Then $B=(B_t:t\in[0,T])$ is a Hermitian Brownian loop in $H(N)$ and $B$ converges in non-commutative distribution to $b=(b_t:t\in[0,T])$.
The non-commutative process $b$ is called the {\em free Hermitian Brownian loop}.

Let $x=(x_t:t\in[0,T])$ be a free unitary Brownian loop in $(\cA,\t)$, as defined in Subsection \ref{FUBL}.

\begin{prop} 
Suppose that $T\in(0,\pi^2]$.
Then, for all $t\in[0,T]$ and all $n\in\Z$,
$$
\t(x_t^n)=\int_\R e^{inx}s_{\sqrt{t(T-t)/T}}(x)dx=\t(e^{inb_t})
$$
where $s_t$ is the semi-circle density \eqref{SUF} of variance $t$.
On the other hand, for almost all $T$ and almost all $s,t\in(0,T)$ with $s<t$,
$$
\t(x^*_sx_t)\not=\t(e^{-ib_s}e^{ib_t})
$$
so $(e^{ib_t}:t\in[0,T])$ is not a free unitary Brownian loop.
\end{prop}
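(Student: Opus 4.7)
For the two equalities, I apply the identification $\tau(x_t^n) = \Phi_T(l(t)^n)$ from the proof of Theorem \ref{FUBLT}, with $l(t)$ the simple loop of Section \ref{section embedded bridge} enclosing a disc of area $t$. In the subcritical regime $T \le \pi^2$, $\rho_T$ is the semicircle density of variance $1/T$ by \eqref{SCD}, so the corollary of Section \ref{SPM} (equivalently, Theorem \ref{EUMF}(e) combined with Proposition \ref{SCC}) evaluates $\Phi_T(l(t)^n)$ to $\int_\R e^{inx} s_{t(T-t)/T}(x)\,dx$. On the Hermitian side, I decompose
$$
b_t = \frac{T-t}{T}\, w_t - \frac{t}{T}\,(w_T - w_t),
$$
exhibiting $b_t$ as a linear combination of the free semicircular elements $w_t$ and $w_T - w_t$, of variances $t$ and $T-t$. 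Hence $b_t$ is itself semicircular with variance $\left(\tfrac{T-t}{T}\right)^{2}\! t + \left(\tfrac{t}{T}\right)^{2}\!(T-t) = \tfrac{t(T-t)}{T}$, and $\tau(e^{inb_t})$ coincides with the same Fourier integral.

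For the inequality, first identify $\tau(x_s^* x_t) = \Phi_T(l(t)\, l(s)^{-1})$ via the monomial construction of Theorem \ref{FUBLT} applied to $X_s^{-1} X_t$. The loop $l(t)\, l(s)^{-1}$ passes twice through the common basepoint $x$ and is not simple, but I will approximate it by simple loops in the length metric. Precisely, move the base of $l(s)$ slightly inward to $\tilde x_\ve$, replacing $l(s)$ by a simple loop $\tilde l_\ve(s)$ disjoint from $l(t)$, and join $l(t)$ to $\tilde l_\ve(s)$ by two disjoint short curves $\delta_1^\ve, \delta_2^\ve$ forming the sides of a narrow slit connecting $l(t)$ to $\tilde l_\ve(s)$. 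The boundary of the resulting slit-annulus region is a single simple closed curve enclosing regions of areas $t-s+O(\ve)$ and $T-(t-s)+O(\ve)$, and after reparametrization from $x$ it converges to $l(t)\, l(s)^{-1}$ in the length metric as $\ve \to 0$. By continuity of $\Phi_T$ (Theorem \ref{EUMF}(a)) together with the simple-loop formula \eqref{SLD},
$$
\tau(x_s^* x_t) = \Phi_T\bigl(l(t)\, l(s)^{-1}\bigr) = \phi_T(1, t-s, T-(t-s)) = \tau(x_{t-s}).
$$

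On the Hermitian side, $(b_s, b_t)$ is a semicircular family, being a linear image of the free semicircular triple $(w_s, w_t - w_s, w_T - w_t)$ of variances $s$, $t-s$, $T-t$; all joint moments are thus computed by non-crossing Wick's theorem from the covariances
$$
\tau(b_s^2) = \frac{s(T-s)}{T},\q \tau(b_t^2) = \frac{t(T-t)}{T},\q \tau(b_s b_t) = \frac{s(T-t)}{T}.
$$
Expanding $\tau(e^{-ib_s} e^{ib_t})$ through fourth order in $b_s, b_t$ and substituting, the second-order contribution reproduces $1 - \tfrac{(t-s)(T-(t-s))}{2T}$, matching $\tau(x_{t-s})$; the fourth-order contributions however disagree with the fourth-order coefficient $\tfrac{1}{12}\bigl(\tfrac{(t-s)(T-(t-s))}{T}\bigr)^{2}$ of the semicircle-exponential $\tau(x_{t-s})$, as one verifies at any test point (for instance $T = 4$, $s = 1$, $t = 2$ yields $17/192$ against $9/192$). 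Analyticity of both sides in $(s, t, T)$ then delivers the inequality on a full-measure set. The main obstacle is the rigorous simple-loop approximation of $l(t)\, l(s)^{-1}$---verifying length-metric convergence and precisely tracking the enclosed areas of the slit-annulus boundary---after which the Hermitian-side discrepancy is a mechanical non-crossing Wick computation.
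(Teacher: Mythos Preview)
Your argument for the first identity matches the paper's. For the second assertion, your proof is correct but differs from the paper's in two places, and in each case the paper's route is shorter.

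For the identity $\tau(x_s^*x_t)=\tau(x_{t-s})$, the paper bypasses the slit-annulus construction entirely: since Brownian motion in $U(N)$ is a L\'evy process, $X_s^{-1}X_t$ has the same law as $X_{t-s}$ for every $N$, and letting $N\to\infty$ gives the identity in one line. Your geometric approximation works (the slit-annulus boundary is simple, encloses areas $t-s+O(\ve)$ and $T-(t-s)+O(\ve)$, and converges in length to $l(t)l(s)^{-1}$ as the slit collapses onto the basepoint), but it is heavier machinery for a fact that is immediate from the finite-$N$ process.

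For the discrepancy, the paper first rewrites $\tau(x_{t-s})=\tau(e^{ib_{t-s}})=\tau(e^{i(b_t-b_s)})$, so the difference becomes $\tau(e^{-ib_s}e^{ib_t}-e^{i(b_t-b_s)})$. A scaling $T\mapsto 1$, $\sigma=\sqrt T$ turns this into an analytic function $F_{s,t}(\sigma)$ of a \emph{single} variable, whose fourth derivative at $0$ is computed in closed form as
\[
F_{s,t}^{(4)}(0)=2\bigl(\tau(b_s^2b_t^2)-\tau(b_sb_tb_sb_t)\bigr)=2\bigl(\tau(b_s^2)\tau(b_t^2)-\tau(b_sb_t)^2\bigr)=2s(t-s)(1-t)>0,
\]
so $F_{s,t}$ has only finitely many zeros in $\sigma\in(0,\pi]$, and Fubini finishes. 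Your direct fourth-order comparison at a test point ($17/192$ versus $9/192$) is correct and equivalent in spirit; the multi-variable real-analyticity you invoke does hold (both sides are entire functions of the covariance parameters, which are polynomial in $s,t,T$), but you should state this explicitly rather than leave it implicit. The paper's reduction to one analytic variable makes the zero-set argument cleaner and yields the positivity for \emph{all} $s<t$ rather than relying on a single numerical check.
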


\begin{proof}  
The first assertion is the content of Proposition \ref{SCC}. 
We turn to the second assertion.
Let $(X_t:t\in[0,T])$ be a Brownian loop in $U(N)$ based at $1$.
Then, since Brownian motion in $U(N)$ is a L\'evy process, $X_s^{-1}X_t$ has same law as $X_{t-s}$. 
On letting $N\to\infty$, we deduce that
$$
\t(x_s^*x_t)=\tau(x_{t-s})=\t(e^{ib_{t-s}})=\t(e^{i(b_t-b_s)})
$$ 
where we used free independence and stationarity of the increments of free Brownian motion for the last equality.
Hence, by the scaling properties of free Brownian motion,
$$
\t(e^{-ib_s}e^{ib_t})-\t(x^*_sx_t)=\t(e^{-ib_s}e^{ib_t}-e^{i(b_t-b_s)})=F_{s/T,t/T}(\sqrt T)
$$
where, taking now $T=1$, 
$$
F_{s,t}(\s)=\t(e^{-i\s b_s}e^{i\s b_t}-e^{i\s (b_t-b_s)}).
$$
By Fubini's theorem, it will suffice to show, for all $s,t\in(0,1)$ with $s<t$, that $F_{s,t}(\s)\not=0$ for almost all $\s\in(0,\pi]$.
We expand the exponential function up to fourth order and use scale invariance of free Brownian motion to obtain
$$
F_{s,t}(0)=F_{s,t}'(0)=F_{s,t}''(0)=F_{s,t}'''(0)=0,\q F_{s,t}''''(0)=2\t(b_s^2b_t^2-b_sb_tb_sb_t).
$$
The variables $(b_t:t\in[0,1])$ are semi-circular, therefore all free cumulants of order more than $3$ vanish (see for example \cite[equation 11.4]{MR2266879}). 
So, using the decomposition of moments into free cumulants,%
\footnote{Here it can be understood as a `non-commutative' Wick formula, with non-crossing matchings in place of all matchings.} 
(see \cite[equation 11.8]{MR2266879}),
$$ 
\t(b_s^2b_t^2-b_sb_tb_sb_t)=\t(b_s^2)\t(b_t^2)-\t(b_sb_t)^2=s(t-s)(1-t)>0.
$$
Since $F_{s,t}$ is analytic in $\s$ on $(0,\pi]$, this implies that it has at most finitely many zeros.
\end{proof}

\subsection{Duality at the midpoint of the loop}
Recall from \eqref{SCD} and \eqref{DCG} the form of $\rho_T$.
It will be convenient to set $\a=0$ and $\b=2/\sqrt T$ in the subcritical case $T\in(0,\pi^2]$.
The following relation appeared first in the physics literature \cite[equation 1.2]{MR1321333}, without a mathematical proof.  

\begin{prop} 
Let $(x_t:t\in[0,T])$ be a free unitary Brownian loop.
Then, for all $T>0$, the spectral measure of $x_{T/2}$ has  a density $\rho_T^*$ with respect to Lebesgue measure on $\U$ (of mass $2\pi$), 
which is invariant under complex conjugation and is such that 
$$
\pi\rho^*_{T}:\U\cap\H\to(\a,\b) 
$$
is the inverse mapping of 
$$
e^{i\pi\rho_T}:(\a,\b)\to\U\cap\H. 
$$
\end{prop}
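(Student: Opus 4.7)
The plan is to extract the Fourier coefficients of the claimed spectral measure from Theorem \ref{EUMF}(e), construct $\rho_T^*$ as stated by inverting $\pi\rho_T$ on $(\alpha,\beta)$, and verify that the two agree. Since $x_{T/2}$ is unitary in $(\cA_T,\t_\infty)$, its spectral measure is the unique probability measure on $\U$ with prescribed Fourier coefficients, so this will suffice.

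First, by Theorem \ref{FUBLT} together with the embedded Brownian loop construction of Section \ref{section embedded bridge}, we have $\t(x_{T/2}^n) = \Phi_T(l(T/2)^n)$, where $l(T/2)$ is a simple loop splitting $\Sbb_T$ into two components of area $T/2$. Specializing Theorem \ref{EUMF}(e) with $a_1 = a_2 = T/2$ collapses the $\cosh$ factor to one, giving for $n \ge 1$
$$
\t(x_{T/2}^n) = \frac{2}{n\pi}\int_0^\infty \sin\{n\pi\rho_T(x)\}\,dx.
$$
Next, I would define $\rho_T^*$ on the upper open half-circle by requiring $\pi\rho_T^*(e^{i\theta})$ to be the unique $x \in (\alpha,\beta)$ with $\pi\rho_T(x) = \theta$ whenever such $x$ exists, and zero otherwise, then extending to $\U$ by complex conjugation. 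Well-definedness requires $\rho_T$ to be a strictly decreasing bijection of $(\alpha,\beta)$ onto $(0,\rho_T(\alpha))$, which is immediate from \eqref{SCD} and \eqref{DCG}; here $\rho_T(\beta) = 0$, while $\rho_T(\alpha) = 1$ in the supercritical regime or $\alpha = 0$ in the subcritical one.

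The central computation is then a change of variable followed by integration by parts. Using the symmetry $\rho_T^*(e^{-i\theta}) = \rho_T^*(e^{i\theta})$, and substituting $\theta = \pi\rho_T(x)$, $d\theta = \pi\rho_T'(x)\,dx$ (with orientation reversed since $\rho_T'<0$ on $(\alpha,\beta)$), one obtains for $n \ge 1$
$$
\int_{-\pi}^\pi e^{in\theta}\rho_T^*(e^{i\theta})\,d\theta = -2\int_\alpha^\beta x\,\rho_T'(x)\cos\{n\pi\rho_T(x)\}\,dx.
$$
Integration by parts against the primitive $(n\pi)^{-1}\sin\{n\pi\rho_T(x)\}$ annihilates the boundary contributions — at $x = \beta$ because $\sin\{n\pi\cdot 0\}=0$, and at $x = \alpha$ because either $\alpha = 0$ or $\sin(n\pi) = 0$ — reducing the right-hand side to $(2/(n\pi))\int_\alpha^\beta \sin\{n\pi\rho_T(x)\}\,dx$, which equals $\t(x_{T/2}^n)$ since $\sin\{n\pi\rho_T(x)\}$ vanishes on $\R \setminus (\alpha,\beta)$. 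The normalization $\int \rho_T^*(e^{i\theta})\,d\theta = 1$ follows from the analogous parts calculation without the oscillating factor, using $\int_0^\beta \rho_T\,dx = 1/2$ and $\rho_T \equiv 1$ on $[-\alpha,\alpha]$.

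The only nontrivial technical point is verifying the strict monotonicity of $\rho_T$ on $(\alpha,\beta)$ from the elliptic-integral representation \eqref{DCG}; once that sign analysis is in hand, the rest of the argument is routine bookkeeping.
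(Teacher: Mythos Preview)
Your proof is correct and follows essentially the same route as the paper's: identify the moments $\tau(x_{T/2}^n)$ via the formula \eqref{SLD} at $a_1=a_2=T/2$, use that $\rho_T$ is strictly decreasing on $(\alpha,\beta)$ to set up the inverse $\psi=\pi\rho_T^*$, and pass between the sine form of the moments and the cosine Fourier expansion of the density by a change of variable plus an integration by parts. The only cosmetic differences are that you run the computation in the opposite direction (construct $\rho_T^*$ first, then verify its moments, rather than manipulate the moment integral until a density appears), you carry out the integration by parts in the $x$-variable rather than the $\theta$-variable, and you treat both regimes and the normalization explicitly whereas the paper writes out only the supercritical case.
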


\begin{proof}  
We write the proof for the supercritical case $T>\pi^2$, leaving the minor adjustments needed when $T\le\pi^2$ to the reader.
The function $\rho_T:(\a,\b)\to(0,1)$ is continuous and strictly decreasing, with $\rho_T(\a)=1$ and $\rho_T(\b)=0$.  
Indeed, according to formula (\ref{DCG}) and an elementary computation (see for example \cite[equation 150]{MR3474469}), for $x\in(\a,\b)$,
$$ 
\frac{\pi\a}2\sqrt{(x^2-\a^2)(\b^2-x^2)}\rho_T'(x)=\int_0^1\frac{\a^2s^2-x^2}{\b^2\sqrt{(1-s^2)(1-k^2s^2)}}ds<0.
$$
Write $\psi$ for the inverse of the bijection $\pi\rho_T:(\a,\b)\to(0,\pi)$.
For all $n\in\Z\sm\{0\}$, by Lemma \ref{formula sin}, 
$$
\tau(x_{T/2}^n)=\frac2{n\pi}\int_\a^\b\sin\{n\pi\rho_T(x)\}dx=-\frac2{n\pi}\int_0^\pi\sin(n\th)\psi'(\th)d\th.
$$
We integrate by parts to obtain
$$
\tau(x_{T/2}^n)=\frac2\pi\int_0^\pi\cos(n\th)\psi(\th)d\th.
$$
Hence the spectral measure of $x_{T/2}$ has a density $\rho^*_T$ with respect to Lebesgue measure on $\U$ given by
\begin{equation*}
\rho^*_T(e^{i\th})=\psi(|\th|)/\pi,\q |\th|\le\pi.
\end{equation*}
\end{proof}

\subsection{Convergence to the planar master field}
We now investigate the behaviour of the master field $\Phi_T$ as $T\to \infty$.  
For $T>0$, $n\in\N$ and $t\in[0,T]$, set
$$
m_T(n,t)=\Phi_T(l^n)
$$
where $l$ is a simple loop which divides $\Sbb_T$ into components of areas $t$ and $T-t$.
Recall that $m_T(n,t)$ does not depend on the choice of $l$.

\begin{prop}
\label{convergence to  free Brownian motion} 
We have
$$
m_T(n,t)\to
\frac{e^{-nt/2}}{2\pi in}\int_\g\left(1+\frac1z\right)^ne^{-ntz}dz
=e^{-nt/2}\sum_{k=1}^{n-1}\frac{(-t)^k}{k!}\binom{n}{k+1}n^{k-1} 
$$
uniformly in $t\in[0,T]$ as $T\to\infty$, where $\g$ is any positively oriented loop in $\C$ winding once around $0$.
\end{prop}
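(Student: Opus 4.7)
The plan is to use the contour representation of Proposition \ref{formula sin}, together with the substitution $z=u+1/2$, to reduce the convergence to the behaviour of the Stieltjes transform $G_T$ in the $T\to\infty$ limit. Writing
$$
m_T(n,t)=\frac1{2\pi in}\int_{\gamma}\exp\{-n(tz-G_T(z))\}dz
$$
for any positively oriented contour $\gamma$ around $[-\b,\b]$, the change of variable pulls out $e^{-nt/2}$ and gives
$$
m_T(n,t)=\frac{e^{-nt/2}}{2\pi in}\int_{\gamma-1/2}\exp\{-n(tu-G_T(u+1/2))\}du.
$$
The claim is then reduced to showing that $G_T(u+1/2)\to\log(1+1/u)$ uniformly on a suitable fixed contour, after which the integrand converges to $(1+1/u)^ne^{-ntu}$.

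For this, the analytic heart of the matter is to show that the equilibrium measure satisfies $\rho_T\to\mathbf{1}_{[-1/2,1/2]}$ and, crucially, that the endpoints $\a=4kK/T$ and $\b=4K/T$ of its support both tend to $1/2$ as $T\to\infty$. One route is to apply the asymptotics $K(k)\sim\log(4/\sqrt{1-k^2})$ and $E(k)\to 1$ as $k\nearrow 1$ to the defining relation $T=8EK-4(1-k^2)K^2$, whence $K\sim T/8$ and the claim follows. A softer variational route is to test $\cI_T$ against $\mathbf{1}_{[-1/2,1/2]}$, which gives $\cI_T(\mathbf{1}_{[-1/2,1/2]})=T/12+O(1)$; since any admissible $\mu$ with $d\mu\le dx$ satisfies $\int x^2 d\mu\ge 1/12$, one forces $\int x^2 d\mu_T\to 1/12$, and the constraint $d\mu_T\le dx$ together with $\int d\mu_T=1$ then propagates this to convergence of the supports. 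Either way, $G_T(u+1/2)\to\log(1+1/u)$ uniformly on compact subsets of $\C\sm[-1,0]$.

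Assuming $\b<1/2+\ve$ for $T$ large, I would then deform every contour $\gamma-1/2$ to a single fixed positively oriented loop winding once around $[-1,0]$ (for instance a circle of radius $2$ centred at $-1/2$), apply bounded convergence, and read off the stated limit. Uniformity in $t$ on bounded subsets of $[0,\infty)$ is automatic since the $t$-dependence enters only through $e^{-ntu}$ on a fixed compact contour; note that the symmetry $m_T(n,t)=m_T(n,T-t)$ means one cannot expect uniformity on the full interval $[0,T]$.

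The main obstacle is the support-convergence step. Weak convergence of $\mu_T$ is the easy part; what we genuinely need is a quantitative upper bound on $\b$ that lets us work on a single fixed contour for all large $T$. The elliptic-integral asymptotics give this directly but require a short careful calculation near $k=1$, while the variational route demands an extra argument (e.g.\ inspection of the Euler--Lagrange equation, or monotonicity of $\b$ in $T$) to upgrade the $L^2$-type control of $\mu_T$ into Hausdorff convergence of $[-\b,\b]$ to $[-1/2,1/2]$.
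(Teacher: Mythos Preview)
Your approach is essentially the paper's: write $m_T(n,t)$ as the contour integral from Proposition~\ref{formula sin}, show the support of $\mu_T$ collapses to $[-1/2,1/2]$ so that $G_T(z)\to\Log\bigl((z+1/2)/(z-1/2)\bigr)$ uniformly on compacts outside $[-1/2,1/2]$, and pass to the limit on a fixed contour. The substitution $z=u+1/2$ is cosmetic.

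Where the paper is slightly cleaner is in the support step. Rather than either of your routes, it observes that the constraints $\rho_T\le1$ and $\int\rho_T=1$, together with $\rho_T\equiv1$ on $[-\a,\a]$ and $\supp\rho_T=[-\b,\b]$, immediately give the sandwich $\a\le1/2\le\b$ for \emph{every} $T$. Then one only needs $k=\a/\b\to1$, which follows from $T=8EK-4(1-k^2)K^2$ since $E$ is bounded and $K$ is bounded on compacts in $[0,1)$. This avoids the asymptotic expansion $K\sim T/8$ and the variational upgrade you flag as the main obstacle; you may want to adopt it.

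Your remark on uniformity is well taken and in fact sharper than the paper's own proof. The symmetry $m_T(n,t)=m_T(n,T-t)$ shows that for $t=T-s$ with $s$ fixed, $m_T(n,t)\to f(s)$ where $f$ is the limit function, whereas $f(T-s)\to0$. So the convergence cannot be uniform on all of $[0,T]$ as stated; it holds uniformly on compacts in $[0,\infty)$, which is all that is used downstream (Proposition~\ref{Convergence Sphere Plane}). The paper's proof simply asserts uniformity on $[0,T]$ after passing to the limit on a fixed circle $\g_R$, but this overlooks that $|e^{-ntz}|$ is unbounded in $t$ on the part of $\g_R$ with $\re(z)<0$.
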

\begin{proof} 
Since the second complete elliptic integral $E(k)$ is bounded and the first $K(k)$ is bounded on compacts in $[0,1)$, the relation
$$
T=8EK-4(1-k^2)K^2
$$
forces $k\to1$ as $T\to\infty$.
Since $\a=k\b\le1/2$ and $\b\ge1/2$ for all $T$, this implies $\a,\b\to1/2$ as $T\to\infty$.
Hence $\rho_T(x)\to1$ for $|x|<1/2$ and $\rho_T(x)\to0$ for $|x|>1/2$, so
$$
G_T(z)=\int_\R\frac{\rho_T(x)}{z-x}dx\to\int_{-1/2}^{1/2}\frac{dx}{z-x}=\Log
\left(\frac{z+1/2}{z-1/2}\right)
$$
uniformly on compacts in $\{z\in\C:|z|>1/2\}$.
By Proposition \ref{formula sin}, for $R>1/2$ and $T$ sufficiently large, uniformly in $t\in[0,T]$,
\begin{align*}
m_T(n,t)
&=\frac{1}{2\pi in}\int_{\g_R}\exp\{-n(tz-G_T(z))\}dz\\
&\to\frac{1}{2\pi in}\int_{\g_R}e^{-ntz}\left(\frac{z+1/2}{z-1/2}\right)^ndz
=\frac{e^{-nt/2}}{2\pi in}\int_\g\left(1+\frac1z\right)^ne^{-ntz}dz
\end{align*}
where $\g_R$ is the positively oriented boundary of $\{z\in\C:|z|=R\}$.
\end{proof}

Denote by $L(\R^2)$ be the set of loops of finite length in $\R^2$ and let 
$$
\Phi:L(\R^2)\to[-1,1]
$$
be the planar master field as defined in \cite{MR3636410}.

\begin{prop} \label{Convergence Sphere Plane}
For each $T>0$, fix a point $x_T\in\Sbb_T$ and denote by $p_T$ the inverse map of the stereographic projection $\Sbb_T\sm\{x_T\}\to\C$.
Then, for all $l\in L(\R^2)$, 
$$
\Phi_T(p_T(l))\to\Phi(l)\q\text{as $T\to \infty$.}
$$
\end{prop}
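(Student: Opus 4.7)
The plan is to verify that $\tilde\Phi(l):=\lim_{T\to\infty}\Phi_T(p_T(l))$, when it exists, coincides with the planar master field $\Phi(l)$, by checking that $\tilde\Phi$ satisfies the characterizing properties of $\Phi$ established in L\'evy \cite{MR3636410} and Dahlqvist \cite{MR3554890}, namely continuity in length, invariance under reduction and area-preserving homeomorphisms, the planar Makeenko--Migdal equations on regular loops, and the prescribed value $\Phi(s)=e^{-t/2}$ on simple loops of enclosed area $t$. The argument proceeds in three stages: a reduction by invariance, an induction on the number of self-intersections for regular loops, and a final extension by continuity in length.

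First, since $\Phi_T$ is invariant under area-preserving homeomorphisms of $\Sbb_T$ (Proposition \ref{state master field}(e)), for any regular $l\in L(\R^2)$ whose bounded faces have total area $A$ and any $T>A$, we may replace $p_T$ by an equivalent embedding in which each bounded face of $l$ retains its planar area and the single face containing $x_T$ (corresponding to the unbounded face of $l$) has area $T-A$. Let $\mfl$ denote the common combinatorial planar loop of $l$ and its spherical image, and $a_T\in\Delta_\mfl(T)$ the resulting face-area vector; all coordinates of $a_T$ except that of the distinguished face are $T$-independent.

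The base case (no self-intersections) is covered by Proposition \ref{convergence to  free Brownian motion}. For the inductive step, let $\mfl$ have $n\ge1$ self-intersections. Choose $f_0$ to be the face containing $x_T$ (with winding number $0$ by a choice of additive constant) and let $f_*$ be a face of maximal winding number $n_*$. By the argument of Proposition \ref{DELTA}, there exists $\alpha\in\R^\cI$, with $\sum_i|\alpha_i|$ bounded by a constant depending only on $A$ and $\mfl$ (not on $T$), such that $a_T(s):=a_T+s\sum_{i\in\cI}\alpha_i\mu_i$ lies in $\Delta_\mfl(T)$ for all $s\in[0,1)$ and $T$ large enough, and such that $a_T(1)$ is concentrated on $\{f_0,f_*\}$ with the area of $f_*$ being a finite quantity independent of $T$. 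Integrating the Makeenko--Migdal equation for $\Phi_T$ along this path, as in the derivation of \eqref{LOQ}, yields
\begin{equation*}
\Phi_T(l(a_T(1)))=\Phi_T(p_T(l))+\sum_{i\in\cI}\alpha_i\int_0^1\Phi_T(l_i(a_T(s)))\Phi_T(\hat l_i(a_T(s)))\,ds.
\end{equation*}
The terminal loop $l(a_T(1))$ reduces to a power $s^{n_*}$ of a simple loop enclosing a fixed bounded area, so the left-hand side converges as $T\to\infty$ by Proposition \ref{convergence to  free Brownian motion}. Each split loop $l_i(a_T(s))$, $\hat l_i(a_T(s))$ has fewer than $n$ self-intersections, and its bounded-face data depends continuously on $s$ and is $T$-independent; by the inductive hypothesis applied to the corresponding planar loops together with bounded convergence (using $|\Phi_T|\le1$), the integrals converge to their planar analogues. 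Writing the same integrated identity for $\Phi$ in the plane, with the combinatorially identical flow applied to the bounded-face coordinates, identifies the limit with $\Phi(l)$.

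The final extension to all loops of finite length follows the lines of Section \ref{RL}: the isoperimetric inequality underlying Proposition \ref{CL} is local, so the associated $L^1$-continuity moduli of $\Phi_T\circ p_T$ are uniform in $T$ (for $T$ bounded below), and likewise hold for $\Phi$. Dyadic geodesic approximation of $l\in L(\R^2)$ by $D_n(l)$, together with an interchange of the $T\to\infty$ and $n\to\infty$ limits justified by these uniform moduli, completes the proof. The principal obstacle is the inductive step: one must verify that the spherical splitting at a self-intersection produces loops whose bounded-face combinatorial data agree with the planar splitting of $l$ at the same intersection, and that the $\alpha_i$ remain bounded uniformly in $T$. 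Both points hinge on the fact that the flow displacement $a_T(1)-a_T$ is supported on the finite-area coordinates, so all dependence on $T$ is absorbed into the distinguished face $f_0$.
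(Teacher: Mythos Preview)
Your approach is essentially the same as the paper's: reduce to regular loops by the Section~\ref{RL} estimates, then induct on the number of self-intersections via the integrated Makeenko--Migdal identity, with Proposition~\ref{convergence to  free Brownian motion} supplying the boundary values on powers of simple loops. The paper likewise flows the face-area vector onto the pair $\{k_\infty,k_*\}$ (the face containing $x_T$ plus one extremal-winding face) and invokes the planar Makeenko--Migdal equations for $\Phi$ from \cite{MR3636410} to identify the limit.

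There is one small gap. You take $f_*$ to be a face of \emph{maximal} winding number $n_*$ and claim that Proposition~\ref{DELTA} provides a flow with $a_T(1)$ supported on $\{f_0,f_*\}$. With $n_\mfl(f_0)=0$, matching the winding-number inner product forces $a_T(1)_{f_*}=\langle a_T,n_\mfl\rangle/n_*$; this is a legitimate (non-negative) area only when $\langle a_T,n_\mfl\rangle\ge0$. For a general regular loop in the plane the quantity $\langle a_T,n_\mfl\rangle=\sum_{k\ne f_0}a_k n_\mfl(k)$ can be negative, in which case you must instead take $f_*$ to be a face of minimal (negative) winding number. The paper makes exactly this case distinction. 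Apart from this, your argument is correct: the $T$-uniform bound on $\sum_i|\alpha_i|$ indeed follows from the linear independence of the $\mu_i$ and the fact that the displacement $a_T(1)-a_T$ is bounded independently of $T$, and the identification of the spherical split loops $l_i(a_T(s)),\hat l_i(a_T(s))$ with planar loops via area-preserving homeomorphisms is exactly what is needed to invoke the inductive hypothesis.
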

\begin{proof}
Let $l$ be a simple loop in $L(\R^2)$ and denote by $a$ the finite area enclosed by $l$.
Then $p_T(l)$ is a simple loop in $L(\Sbb_T)$ which divides $\Sbb_T$ into two components and does not pass through $x_T$.
Denote by $a_T$ the area of the component which does not contain $x_T$.
Then $a_T\to a$ as $T\to\infty$.
By Proposition \ref{convergence to  free Brownian motion}, this implies
$$
\Phi_T(p_T(l^n))\to e^{-na/2}\sum_{k=1}^{n-1}\frac{(-a)^k}{k!}\binom{n}{k+1}n^{k-1}=\Phi(l^n)
$$
as $T\to\infty$, where we used \cite[equation (2)]{MR3636410} for the last equality. 

Now $\Phi$ also satisfies the Makeenko--Migdal equations \cite{MR3636410}. 
By a variation of the argument used to prove Theorem \ref{REG}, we can extend convergence from
powers of simple loops to all regular loops.
We sketch the small change which is needed.
There is now a face, $k_\infty$ say, of infinite area.
So we work in the orthant 
$$
Y_\mfl=\{(a_k:k\in\cF_\mfl):a_{k_\infty}=\infty\text{ and }a_k\in(0,\infty)\text{ for all }k\not=k_\infty\}.
$$
Set
$$
\overline{a}=\sum_{k\not=k_\infty}a_k.
$$
Write $k_0,k_*$, as before, for the faces of minimal and maximal winding number, now choosing the additive constant so that
$n_\mfl(k_\infty)=0$.
Given $a\in Y_\mfl$, either $\<a,n_\mfl\>\ge0$, or $\<a,n_\mfl\><0$.
(We use here the convention that $\infty\times0=0$.)
In the first case, $k_*\not= k_\infty$ and there exists $a'\in\overline{Y_\mfl}$ with $a'_k=0$ for $k\not=k_*,k_\infty$ such that
$$
\<a',n_\mfl\>=a_{k_*}'n_\mfl(k_*)=\<a,n_\mfl\>.
$$
Set
$$
v_k=
\begin{cases}
a'_k-a_k,&\text{if $k\not=k_\infty$},\\
\overline{a}-\overline{a'},&\text{if $k=k_\infty$}.
\end{cases}
$$
and set $a(t)=a+tv$.
Then $a'=a(1)$ and $a(t)\in Y_\mfl$ for all $t\in[0,1)$, and $v\in\mfm_\mfl$ by Proposition \ref{DELTA}.
An analogous argument holds in the second case.
We can then proceed as in Subsection \ref{PRREG}.
The arguments of Section \ref{RL} also carry over to extend the limit
$$
\Phi_T(p_T(l))\to\Phi(l)
$$
to all $l\in L(\R^2)$.
We omit the details.
\end{proof}

\subsection{Uniqueness of the master field}\label{MOREU}
In Theorem \ref{EUMF}, we showed that the master field is characterized by certain properties.
In fact there is some redundancy in this characterization, as the following result shows.

\begin{prop}\label{UNI}
Let $\Phi:L(\Sbb_T)\to\C$ be a continuous function, which is invariant under reduction and under area-preserving, orientation-preserving homeomorphisms, satisfies the Makeenko--Migdal equations on regular loops, and 
is given on simple loops $l$ by
\begin{equation}\label{SLC1}
\Phi_T(l)=\frac2\pi\int_0^\infty\cosh\left\{(a-b)x/2\right\}\sin\{\pi\rho_T(x)\}dx
\end{equation}
where $a$ and $b$ are the areas of the connected components of $\Sbb_T\sm\{l^*\}$. 
Then $\Phi$ is the master field $\Phi_T$.
\end{prop}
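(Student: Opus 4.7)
By Theorem \ref{EUMF}, it suffices to strengthen the hypothesis of Proposition \ref{UNI} from $\Phi(l) = \phi_T(1,a_1,a_2)$ on simple loops to $\Phi(l^n) = \phi_T(n,a_1,a_2)$ for every simple loop $l$ and every $n \in \N$; the remaining four properties listed in Theorem \ref{EUMF} are already assumed. The plan is to argue by induction on $n$, the base case $n=1$ being given by hypothesis.

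Assume the identity holds for all $k \le n$; I want to derive it for $k = n+1$. By Proposition \ref{REG} applied at level $n-1$, the inductive hypothesis provides $\bar\Phi_T(s^k) = \phi_T(k,a_1,a_2)$ for all $k \le n$, which implies $\Phi \equiv \Phi_T$ throughout $L_{n-1}(\Sbb_T)$. Set $\Psi = \Phi - \Phi_T$ and subtract the Makeenko--Migdal equations for $\Phi$ and $\Phi_T$ to obtain the linearized relation
\[
\mu_i \Psi(l) = \Psi(l_v)\Phi(\hat l_v) + \Phi_T(l_v)\Psi(\hat l_v).
\]
For any regular loop $l$ with $n$ self-intersections, the two loops $l_v, \hat l_v$ obtained by splitting at a self-intersection $v$ each have at most $n-1$ self-intersections and thus lie in $L_{n-1}(\Sbb_T)$, so the right-hand side vanishes. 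Hence $\mu_i \Psi = 0$ along every Makeenko--Migdal direction, and by Proposition \ref{DELTA} the associated function $\psi_\mfl$ on $\Delta_\mfl(T)$, for each combinatorial planar loop $\mfl$ with $n$ self-intersections, is constant on every level set of the winding-weighted area $a \mapsto \langle a,n_\mfl \rangle$.

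The heart of the matter is to find, for a maximally winding combinatorial loop $\mfl$ (one whose winding function $n_\mfl$ spans a range of $n+1$, so that $s^{n+1}$ appears at the boundary of a leaf of the foliation), a second boundary configuration of the same leaf on which $\Psi$ is already known to vanish. Consider the canonical spiral representative of $\mfl$: its innermost face $f$, of extremal winding, is bounded by a single loop-edge incident to only one self-intersection. Letting $a_f \to 0$ contracts that edge, turns the adjacent self-intersection into a degree-$2$ vertex, and after the natural identification produces a combinatorial loop with $n-1$ self-intersections lying in $\overline{L_{n-1}(\Sbb_T)}$. By the inductive hypothesis together with continuity of $\Psi$ on $L(\Sbb_T)$, $\Psi$ vanishes on this boundary, and constancy of $\Psi$ along the leaf then forces $\psi_\mfl \equiv 0$. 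At the opposite boundary, where the intermediate faces of windings $1,\ldots,n$ shrink simultaneously, the configuration degenerates to $s^{n+1}$ with $s$ bounding regions of areas $(T - c/(n+1),\, c/(n+1))$, where $c$ parametrizes the leaf. As $c$ varies over $(0,(n+1)T)$, every admissible area partition is realized, so $\Psi(s^{n+1}) = 0$ for every simple $s$ and every pair $(a_1,a_2)$, closing the induction.

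The principal obstacle I anticipate is the verification that contracting the innermost face of the canonical maximally winding graph really strips exactly one self-intersection and lands the resulting loop in $L_{n-1}(\Sbb_T)$; this requires pinning down the standard maximally winding combinatorial loop (an iterated spiral around a common vertex) and running an edge-contraction analysis via Euler's formula on $\Sbb_T$. A secondary point is ensuring that the two boundary configurations identified above lie in the closure of a single connected level set of $\langle \cdot,n_\mfl \rangle$ within $\Delta_\mfl(T)$, but this follows from convexity of those level sets together with the spanning property of the Makeenko--Migdal vectors provided by Proposition \ref{DELTA}.
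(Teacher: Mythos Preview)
Your approach is essentially the paper's: induct on $n$, use Proposition \ref{REG} to get $\Phi=\Phi_T$ on $L_{n-1}(\Sbb_T)$, then work in $\Delta_{\mfw_{n+1}}(T)$ for the maximally winding spiral and flow along Makeenko--Migdal directions to connect $s^{n+1}$ with a boundary configuration of lower complexity. Your formulation via $\Psi=\Phi-\Phi_T$ and constancy on the level sets of $\langle a,n_\mfl\rangle$ is a clean repackaging of the same idea.

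There is, however, a genuine gap in the range argument. The faces of $\mfw_{n+1}$ carry winding numbers $0,1,\dots,n+1$. On the boundary stratum $\{a_{n+1}=0\}$ the invariant $c=\langle a,n_\mfl\rangle$ is confined to $[0,nT]$, since with $a_{n+1}=0$ one has $c=\sum_{k=0}^n k\,a_k\le n\sum a_k=nT$. But at the $s^{n+1}$ boundary one has $c=(n+1)a_*$, which ranges over all of $(0,(n+1)T)$. So leaves with $c\in(nT,(n+1)T)$, that is $a_*>nT/(n+1)$, never meet the innermost-face boundary, and your constancy argument gives no information there. Your sentence ``As $c$ varies over $(0,(n+1)T)$, every admissible area partition is realized'' is true for the $s^{n+1}$ side but does not establish that the \emph{same} leaves reach $\{a_{n+1}=0\}$.

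The fix is immediate: treat the outermost face $f_0$ symmetrically. Shrinking $a_0\to0$ collapses the bigon bounded by $e_1,e_1'$ and, after reduction, yields a loop with $n-1$ self-intersections (again a spiral $\mfw_n$, now based at $v_2$), so $\Psi$ vanishes on $\{a_0=0\}$ by the same inductive appeal. That stratum carries $c\in[T,(n+1)T)$, and since $[0,nT]\cup[T,(n+1)T)=(0,(n+1)T)$ for $n\ge1$, every leaf meets at least one of the two extremal-face boundaries. The paper's own proof, which sends \emph{both} $a_0$ and $a_{n+1}$ to zero simultaneously via a strictly concave profile $(\a_m)$, runs into the analogous constraint $a_0,a_*\ge T/(n+1)$ for the existence of that profile, so the same patch is implicitly needed there as well.
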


The proof will be based on an argument for a special class of loops which we now introduce.
Informally, for $n\ge1$ fix an initial point $x_1$ and draw an inward anticlockwise spiral which winds $n$ times around another point $o$,
crossing the line $ox_1$ at points $x_2,\dots,x_n$ then, on hitting $ox_1$ for the $n$th time, returning to $x_1$ along $ox_1$.
Thus we obtain a combinatorial planar loop $\mfw_n$ whose combinatorial graph is given as follows:
$$
\cV=\{1,\dots,n\},\q\cE=\{1,\dots,n\}\cup\{1',\dots,(n-1)'\},\q\cF=\{0,1,\dots,n\}
$$
where, for $j=1,\dots,n-1$,
$$
s(j)=j,\q t(j)=j+1,\q s(j')=j+1,\q t(j')=j
$$
and 
$$
l(j)=l(j')=j,\q r(j)=r(j')=j-1
$$
while
$$
s(n)=t(n)=n,\q l(n)=n,\q r(n)=n-1.
$$
See Figure \ref{Max Winding}. 
\begin{figure}\centering 
\includegraphics[width=100 mm,height=60 mm]{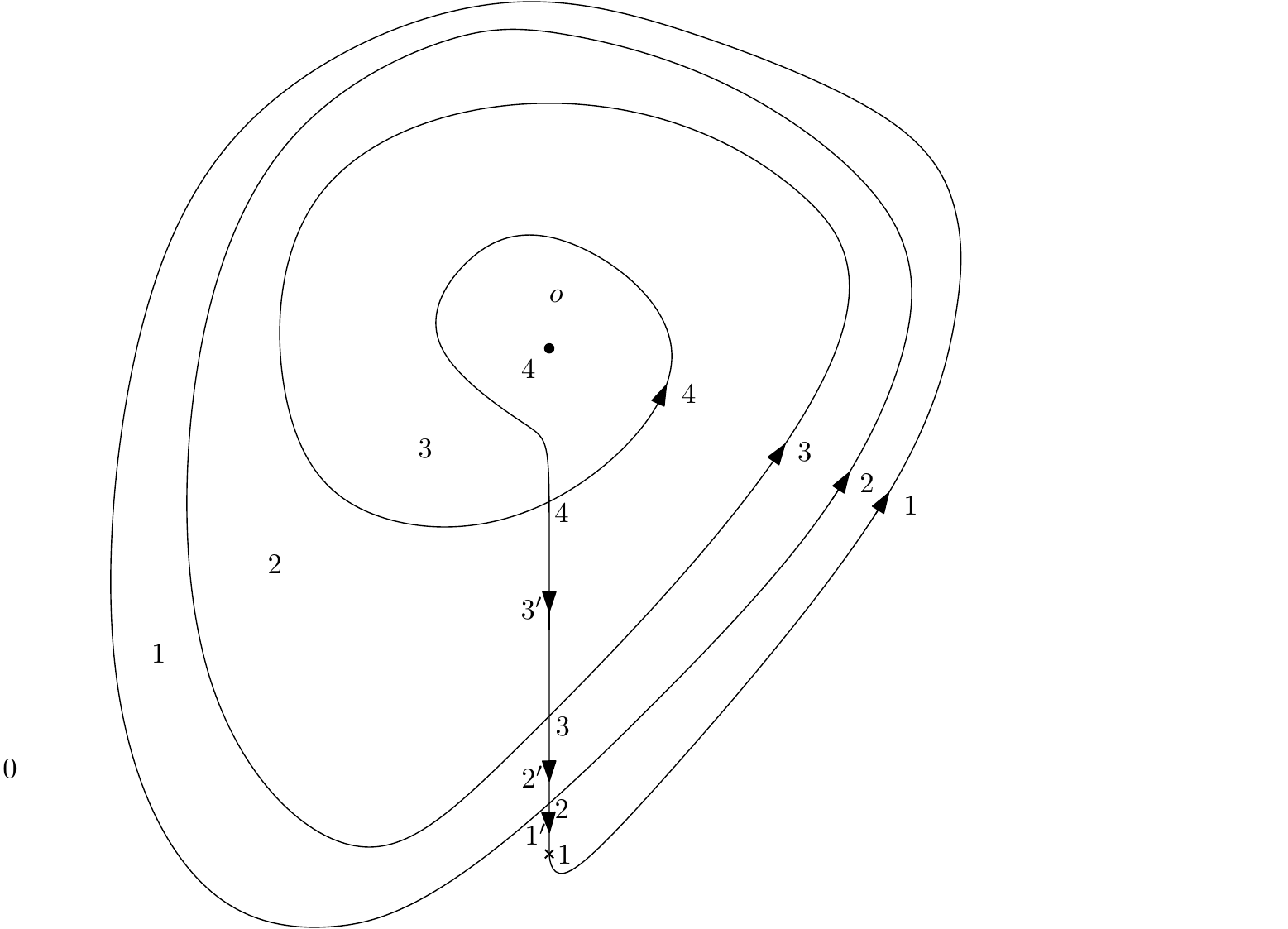}\caption{\label{Max Winding} A drawing of the maximally winding loop $\mfw_4$.}
\end{figure}  
Here, we have used a non-standard labelling for the edges and faces which is adapted to the structure of the graph.
Note that the self-intersections of $\mfl_n$ are labelled by $\{2,\dots,n\}$.
If we fix the additive constant for the winding number so that $n_{\mfw_n}(0)=0$, then $n_{\mfw_n}(n)=n$.
For $n\ge1$ and for any combinatorial planar loop $\mfl$ with $n-1$ self-intersections, we have 
$$
n_*=\max\{n_\mfl(k)-n_\mfl(k'):k,k'\in\cF\}\le n.
$$
We call $\mfw_n$, and any associated regular embedded loop $l$, and any rerooting of $l$, a {\em maximally winding} loop.

\begin{proof}[Proof of Proposition \ref{UNI}]
Let $n\ge1$ and suppose inductively that $\Phi(l^m)=\Phi_T(l^m)$ for all $m\le n$ and all simple loops $l$.
A comparison of equations \eqref{SLD} and \eqref{SLC1} shows that this is true for $n=1$.
Let $l$ be a simple loop which divides $\Sbb_T$ into components of areas $a_0,a_*\in(0,T)$.
We can find $(\a_1,\a_2,\dots,\a_{n+2})$ such that
$$
\a_1=0,\q\a_2=a_0,\q\a_{n+1}=a_*,\q\a_{n+2}=0
$$
and, for $m=2,\dots,n+1$,
$$
\a_{m-1}-2\a_m+\a_{m+1}<0.
$$
Consider the (constant) vector field $v$ on $\Delta_{\mfl_{n+1}}(T)$ given by
$$
v=\sum_{i=2}^{n+1}\a_i\mu_i.
$$
Then $v_0=-a_0$ and $v_{n+1}=-a_*$ and $v_k>0$ for $k=1,\dots,n$.
Set
$$
a(t)=(a_0,0,\dots,0,a_*)+tv
$$
then $a(t)\in\Delta_{\mfl_{n+1}}(T)$ for all $t\in(0,1)$ and $a_0(1)=a_{n+1}(1)=0$.
There exists a continuous family loops $(l(t):t\in[0,1])$, with a common basepoint such that, 
$l(0)=l^{n+1}$, $l(t)\in\cG_{\mfl_{n+1}}(a(t))$ for all $t\in(0,1)$, and $l(1)$ is a maximally winding loop with $n-2$ self-intersections.
Then, by the arguments used in the proof of Proposition \ref{REG},
$$
\Phi(l(1))=\Phi(l^{n+1})+\sum_{i=2}^{n+1}\a_i\int_\t^1\Phi(l_i(s))\Phi(\hat l_i(s))ds
$$
where $l_i(s)$ and $\hat l_i(s)$ are maximally winding loops having $i-2$ and $n+1-i$ self-intersections.
But the same equation holds for $\Phi_T$ and the inductive hypothesis, combined with the argument of the proof of Proposition \ref{REG}, implies that
$$
\Phi(l(1))=\Phi_T(l(1)),\q 
\Phi(l_i(s))=\Phi_T(l_i(s)),\q
\Phi(\hat l_i(s))=\Phi_T(\hat l_i(s)).
$$
Hence $\Phi(l^{n+1})=\Phi_T(l^{n+1})$ and the induction proceeds.
Finally, by Proposition \ref{REG}, it follows that $\Phi(l)=\Phi_T(l)$ for all $l\in L(\Sbb_T)$.
\end{proof}

On the other hand, condition \eqref{SLC1} is not redundant in Proposition \ref{UNI}, as we now show.
Each loop $l\in L(\Sbb_T)$ has a winding number function
$$
n_l:\Sbb_T\sm l^*\to\Z
$$
which is unique up to an additive constant.
By the Banchoff--Pohl inequality \cite{MR0305319}, we know that $n_l\in L^2(\Sbb_T)$ so $n_l$ has a well-defined average value $\<n_l\>$ 
with respect to the uniform distribution on $\Sbb_T$, up to the same additive constant.
Hence, we can define a unique function $\Psi:L(\Sbb_T)\to\C$ by
$$
\Psi(l)=e^{2\pi i\<n_l\>}.
$$
For loops $l_1,l_2$ based at the same point, we have $n_{l_1l_2}=n_{l_1}+n_{l_2}$, so 
$$
\Psi(l_1l_2)=\Psi(l_1)\Psi(l_2).
$$
Morever, $\Psi$ is invariant under any area-preserving, orientation-preserving diffeomorphism so, in particular, under any Makeenko--Migdal flow.
Consider, for $n\in\Z$, the twisted master field $\Phi_T^{(n)}:L(\Sbb_T)\to\C$ given by
$$
\Phi^{(n)}_T(l)=\Psi(l)^n\Phi_T(l).
$$
Then $\Phi_T^{(n)}$ is continuous, invariant under reduction and area-preserving, orientation-preserving homeomorphisms and satisfies the Makeenko--Migdal equations on regular loops.
However, for a simple loop $l$ which winds positively around a domain of area $a$, we have
$$
\Psi(l)=e^{2\pi ia/T}
$$
so, for $n\not=0$, $\Phi_T^{(n)}$ is not the master field.
Hence, by Proposition \ref{UNI}, or by inspection, $\Phi_T^{(n)}$ does not satisfy \eqref{SLC1}.
For $n\not=0$, $\Phi_T^{(n)}$ also fails to be invariant under orientation-reversing homeomorphisms.
We do not know whether this stronger invariance condition would allow one to dispense with \eqref{SLC1} in Proposition \ref{UNI}.
 
\subsection{Combinatorial formulas for the master field \label{planarloops}} 
Rusakov \cite{Rusakov} proposed, without proof, that there should be a closed formula for the value of the master field for any regular loop on the sphere.  
We now prove a formula with a slightly different form to the one given in \cite{Rusakov} and for the following restricted class of loops introduced in \cite{MR605753}.
Let us say that a combinatorial planar loop $\mfl$ is {\em splittable} if for all self-intersection points $i$ of $\mfl$, 
the two loops $\mfl_i,\hat\mfl_i$, obtained by following outgoing strands of $\mfl$ starting from $i$, intersect only at $i$.

\begin{figure}
\label{fig planar loop}
\centering 
\includegraphics[width=50 mm,height=44 mm]{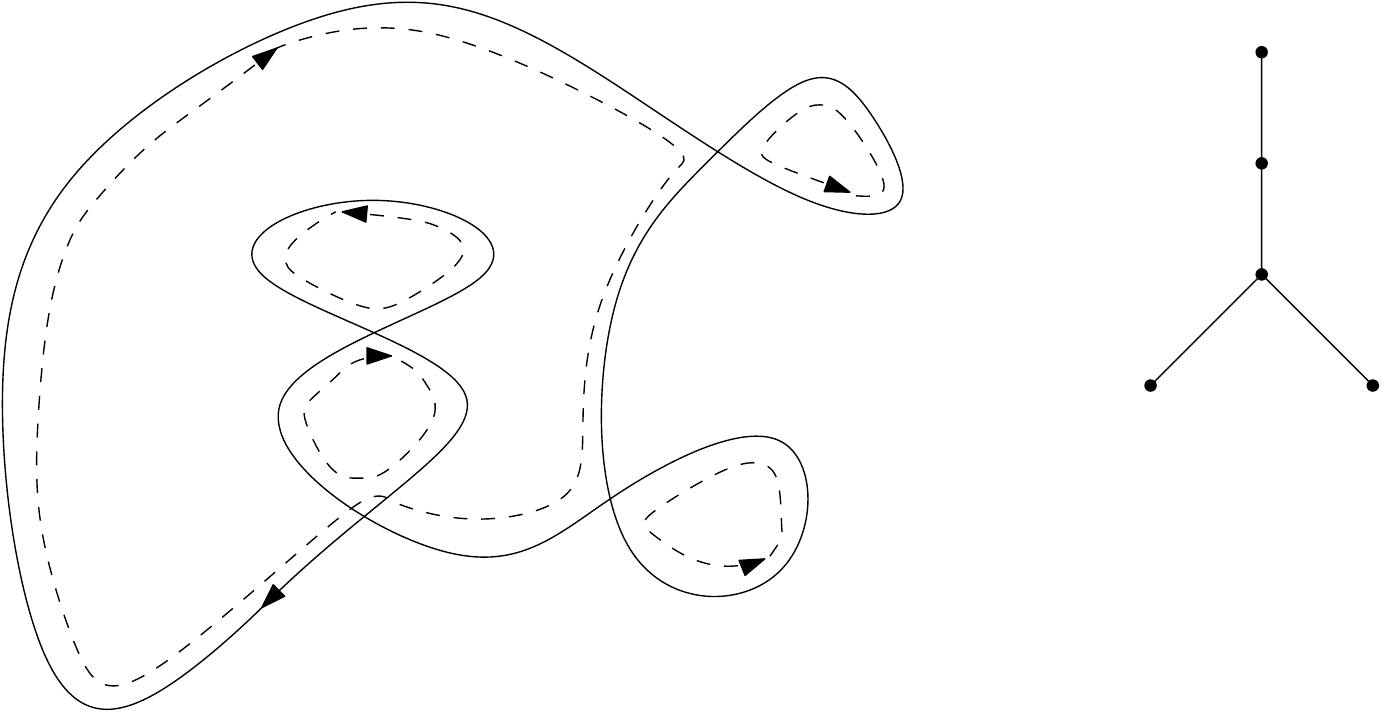}
\caption{A splittable combinatorial planar loop with its family of simple loops $\cS_\mfl$ drawn in dashed lines,
next to a combinatorial representation of the tree structure of $\cS_\mfl$.}
\end{figure}

Let $\mfl$ be a splittable combinatorial planar loop with $n$ points of intersection.
On splitting $\mfl$ at all points of intersection, 
we obtain a family of simple combinatorial loops $\cS_\mfl=\{\mfs_1,\dots\mfs_{n+1}\}$ in $\mfl$, 
which has the structure of a tree,
in which $\mfs_j$ and $\mfs_{j'}$ are adjacent if they share a point of intersection of $\mfl$.
We choose the sequence $(\mfs_1,\dots,\mfs_{n+1})$ to be an {\em adapted labelling} of $\cS_\mfl$, 
meaning that $\mfs_{k+1}$ is adjacent to at least one of $\mfs_1,\dots,\mfs_k$ for all $k\in\{1,\dots,n\}$.
Given $T\in(0,\infty)$, 
a distinguished face $k\in\cF_\mfl$ and an adapted labelling $(\mfs_1,\dots,\mfs_{n+1})$ of $\cS_\mfl$, 
let us say that a sequence $(\g_1,\dots,\g_{n+1})$ of disjoint simple loops in $\C$ around $[-\b,\b]$ is {\em admissible} if
\begin{enumerate}
\item[(a)]$\g_{j+1}$ lies in the infinite component of $\C\sm\g_j^*$ for all $j\le n$,
\item[(b)]$\g_j$ has the same orientation in $\C$ as $\mfs_j$ has around $k$ for all $j$.
\end{enumerate}
For any self-intersection point $i$ of $\mfl$, we label the loops among $\mfl_i,\hat\mfl_i$ 
using the left and right outgoing edges at $i$ by $\mfl_{i,l}$ and $\mfl_{i,r}$ respectively.
The loops $\mfl_{i,l}$ and $\mfl_{i,r}$ are also splittable, 
and the pair $\{\cS_{\mfl_{i,l}},\cS_{\mfl_{i,r}}\}$ is a partition of $\cS_\mfl$.
Write $j(i,l)$ and $j(i,r)$ for the loop labels in $\cS_\mfl$ such that $\mfs_{j(i,l)}$ and $\mfs_{j(i,r)}$ use the left and right outgoing edges at $i$ respectively.
Let $n_\mfl$ be the winding number function of $\mfl$, where the additive constant is chosen so that $n_\mfl(k)=0$.
Set $\ve_j=-1$ or $\ve_j=1$ according as $\mfs_j$ winds positively or negatively around $k$.
Set
$$
\cO_\mfl=\{(z_1,\dots,z_{n+1})\in\C^{n+1}:z_j\not=z_{j'}\text{ for all $j,j'$ distinct}\}
$$
and, for $a\in\Delta_\mfl(T)$ and $z\in\cO_\mfl$, define
$$   
Q_{\mfl,k}(a,z)=\frac{\prod_{j=1}^{n+1}\exp\{\<n_\mfl,a\>z_j+\ve_jG_T(z_j)\}}{\prod_{i\in\cI}(z_{j(i,r)}-z_{j(i,l)})}.
$$
Recall from Subsection \ref{PRREG} that, for all combinatorial planar loops $\mfl$, there is a uniformly continuous map
$$
\phi_\mfl:\Delta_\mfl(T)\to\R
$$
such that $\Phi_T(l)=\phi_\mfl(a)$ for all $a\in\Delta_\mfl(T)$ and all $l\in\mfl(a)$.

\begin{prop}
\label{formula planar loops}
For all $T\in(0,\infty)$, 
all splittable combinatorial planar loops $\mfl$ with $n$ self-intersections and equipped with a distinguished face $k$,
all adapted labellings $(\mfs_1,\dots,\mfs_{n+1})$ of $\cS_\mfl$,
and all admissible sequences of closed loops $(\g_1,\dots,\g_{n+1})$,
we have, for all $a\in\Delta_\mfl(T)$,
\begin{equation}\label{CFM}
\phi_\mfl(a)=\left(\frac1{2\pi i}\right)^{n+1}\int_{\g_1}dz_1\dots\int_{\g_{n+1}}dz_{n+1}Q_{\mfl,k}(a,z).
\end{equation}
\end{prop}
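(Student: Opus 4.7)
The plan is to proceed by induction on $n$, the number of self-intersections of $\mfl$, using the Makeenko--Migdal equations of Proposition \ref{MM equations} to propagate the formula from loops with strictly fewer self-intersections. Write $\psi_{\mfl,k}(a)$ for the right-hand side of \eqref{CFM}. By Propositions \ref{DTC} and \ref{REG}, both $\phi_\mfl$ and $\psi_{\mfl,k}$ extend continuously to $\overline{\Delta_\mfl(T)}$, so it will suffice to show that they satisfy the same Makeenko--Migdal system together with matching values at a single boundary configuration.

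For the base case $n=0$, the loop $\mfl$ is simple and $\cS_\mfl = \{\mfs_1\}$, so the right-hand side reduces to a single contour integral of $\exp\{\<n_\mfl,a\>z_1 + \ve_1 G_T(z_1)\}$; after adjusting the orientation of $\g_1$ and, if necessary, substituting $z_1 \mapsto -z_1$ according to the value of $\ve_1$, this becomes exactly the contour identity of Proposition \ref{formula sin} applied to the two face-areas of $\mfs_1$. For the inductive step, fix a self-intersection $i \in \cI$. Splittability of $\mfl$ guarantees that $\mfl_{i,l}$ and $\mfl_{i,r}$ are again splittable with strictly fewer self-intersections, that $\{\cS_{\mfl_{i,l}},\cS_{\mfl_{i,r}}\}$ is a partition of $\cS_\mfl$, and that the adapted labelling of $\cS_\mfl$ together with the nested family $(\g_j)$ restricts to admissible data for each side, with distinguished faces $k_l, k_r$ inherited from $k$. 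By induction, $\phi_{\mfl_{i,l}} = \psi_{\mfl_{i,l},k_l}$ and $\phi_{\mfl_{i,r}} = \psi_{\mfl_{i,r},k_r}$. Proposition \ref{MM equations} yields $\mu_i \phi_\mfl = \phi_{\mfl_{i,l}}\phi_{\mfl_{i,r}}$; the task is therefore to prove the analogous relation $\mu_i \psi_{\mfl,k} = \psi_{\mfl_{i,l},k_l}\psi_{\mfl_{i,r},k_r}$ and then to match the two sides at a boundary point $a_* \in \overline{\Delta_\mfl(T)}$ at which $\mfl$ reduces to a splittable loop with fewer self-intersections; integration along a piecewise $\mu_i$-trajectory from $a_*$ back to a general $a$ completes the induction.

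The main obstacle is the residue/contour-deformation computation underlying the Makeenko--Migdal relation for $\psi_{\mfl,k}$. The factor $(z_{j(i,r)} - z_{j(i,l)})^{-1}$ is the unique coupling in $Q_{\mfl,k}$ between the two groups of $z$-variables attached to $\cS_{\mfl_{i,l}}$ and $\cS_{\mfl_{i,r}}$; the expected mechanism is that the directional derivative in $\mu_i$, combined with deformation of the contour of $z_{j(i,l)}$ across the contour of $z_{j(i,r)}$, produces a residue at $z_{j(i,r)} = z_{j(i,l)}$ that decouples the remaining $(n+1)$-fold integral into a product of admissible integrals for $\mfl_{i,l}$ and $\mfl_{i,r}$. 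Carrying this out demands careful tracking of the nested orientations of the $\g_j$, the signs $\ve_j$, the winding numbers $n_{\mfl_{i,l}}, n_{\mfl_{i,r}}$ relative to $k_l, k_r$, and the combinatorial identification of $(j(i,l), j(i,r))$ under the partition of $\cS_\mfl$. An alternative route, modelled on the proof of Proposition \ref{Representation simple loops beta ensemble}, would be to expand $\E(\tr(H_l))$ directly via the character formulas of Section \ref{HADBE}: for splittable loops the character sum should factorise along the tree structure of $\cS_\mfl$, reducing the large-$N$ asymptotics to an iterated application of the saddle-point analysis already performed for simple loops.
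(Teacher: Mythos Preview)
Your inductive skeleton (induction on $n$, verify the Makeenko--Migdal relation for $\psi_{\mfl,k}$, match at a boundary where $\mfl$ degenerates to a splittable loop with fewer self-intersections) is essentially the paper's route. But you have misplaced the residue computation, and this is the heart of the argument.

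The Makeenko--Migdal identity $\mu_i\psi_{\mfl,k}=\psi_{\mfl_{i,l},k_l}\psi_{\mfl_{i,r},k_r}$ does \emph{not} come from a contour deformation or a residue at $z_{j(i,r)}=z_{j(i,l)}$. It is purely algebraic: the only $a$-dependence of $Q_{\mfl,k}$ sits in the exponents $\langle n_{\mfs_j},a\rangle z_j$, and splittability means that among the $\mfs_j$ exactly $\mfs_{j(i,r)}$ and $\mfs_{j(i,l)}$ pass through $i$, with $\mu_i\langle n_{\mfs_{j(i,r)}},a\rangle=+1$, $\mu_i\langle n_{\mfs_{j(i,l)}},a\rangle=-1$, and all other contributions zero. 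Thus $\mu_i$ applied to the product of exponentials produces exactly the factor $(z_{j(i,r)}-z_{j(i,l)})$, which cancels the corresponding denominator term and leaves an integrand that factorises over the partition $\cS_\mfl=\cS_{\mfl_{i,l}}\sqcup\cS_{\mfl_{i,r}}$. No pole is crossed; the nested contours are untouched.

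The genuine contour argument appears instead at the \emph{boundary} step. The paper picks $i$ corresponding to a leaf $\mfs_m$ of the tree $\cS_\mfl$ (not an arbitrary $i$), so that collapsing along $\mu_i$ sends the area $a_{k_c}$ of the face enclosed by $\mfs_m$ to zero. At $a_{k_c}=0$ the $a$-dependence in the $z_m$-variable disappears and the $z_m$-integral reduces to
\[
\frac{1}{2\pi i}\int_C\frac{e^{\ve_m G_T(z_m)}}{z_m-z_{j(i)}}\,dz_m=1,
\]
computed by sending $C$ to infinity and using $G_T(z)\sim 1/z$. This is where the residue lives, and it is what shows $\psi_{\mfl}(a)|_{a_{k_c}=0}=\psi_{\tilde\mfl}(\iota_c(a))$ for the reduced loop $\tilde\mfl$, closing the induction.

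One further ingredient you are missing: to make sense of ``$\mu_i\psi_\mfl=0$ hence $\psi_\mfl(a)=\psi_\mfl(a-a_{k_c}\nu)$'' one needs to leave the real simplex, since $a-a_{k_c}\nu$ lands on $\partial\Delta_\mfl(T)$. The paper handles this via Lemma~\ref{ANA}, which gives analytic extensions of both $\phi_\mfl$ and $\psi_\mfl$ to the complexified simplex $\Delta_{\mfl,\C}(T)$; your appeal to continuity on $\overline{\Delta_\mfl(T)}$ is not quite enough on its own, and in any case Proposition~\ref{DTC} says nothing about $\psi_{\mfl,k}$.
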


We will need the following technical lemma.
Set
$$
\Delta_{\mfl,\C}(T)=\left\{(a_k:k\in\cF_\mfl):a_k\in\C,\,\sum_ka_k=T\right\}.
$$

\begin{lem}\label{ANA}
The map $\phi_\mfl$ has an analytic extension $\Delta_{\mfl,\C}(T)\to\C$.
\end{lem}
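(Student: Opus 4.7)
The plan is to prove the lemma by induction on the number $n$ of self-intersections of $\mfl$, using the explicit integral formula for simple loops as the base case and the integrated Makeenko--Migdal equations to propagate.

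For the base case $n = 0$, the loop $\mfs$ is simple and \eqref{PHITD} gives
$$\phi_\mfs(a_1, a_2) = \phi_T(1, a_1, a_2) = \tfrac{2}{\pi}\int_0^\infty \cosh\bigl((a_1 - a_2)x/2\bigr)\sin\{\pi\rho_T(x)\}\,dx$$
with $a_1 + a_2 = T$. Since $\sin\{\pi\rho_T(x)\}$ vanishes outside the compact interval $[-\beta, \beta]$ and $\cosh$ is entire, this integral defines an entire function of $a_1 \in \C$ and so gives an entire extension on $\Delta_{\mfs, \C}(T) \cong \C$. The same reasoning shows that $\phi_T(n_*, a_1, T - a_1)$ extends entire in $a_1$ for every positive integer $n_*$.

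For the inductive step, assume the result for all combinatorial loops with fewer than $n$ self-intersections and let $\mfl$ have $n \ge 1$ self-intersections. At every transverse self-intersection of a regular loop the four surrounding winding numbers take values $\{k-1, k, k+1\}$ for some integer $k$ (with $k$ repeated), so the range $n_* = \max n_\mfl - \min n_\mfl$ is at least $2$; choose faces $k_0, k_*$ realizing respectively the minimum and maximum of $n_\mfl$. For each $a \in \Delta_{\mfl, \C}(T)$, define the seed point $a_0(a)$ as the unique element of $\Delta_{\mfl, \C}(T)$ supported on $\{k_0, k_*\}$ with $\<n_\mfl, a_0(a)\> = \<n_\mfl, a\>$; it depends $\C$-linearly on $a$. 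Since the $\mu_i$, $i \in \cI$, form a basis of $\mfm_\mfl$ and $a - a_0(a)$ lies in its complexification, there exist unique $\alpha_i(a) \in \C$ with $a - a_0(a) = \sum_{i \in \cI} \alpha_i(a) \mu_i$, again $\C$-linear in $a$. Setting $a(t) = a_0(a) + t(a - a_0(a))$, integration of the Makeenko--Migdal equations (as in the proof of Proposition \ref{REG}) along the straight-line path from $a_0(a)$ to $a$ gives, for real $a \in \Delta_\mfl(T)$,
$$\phi_\mfl(a) = \phi_\mfl(a_0(a)) + \int_0^1 \sum_{i \in \cI} \alpha_i(a)\,\phi_{\mfl_i}(a(t))\,\phi_{\hat\mfl_i}(a(t))\,dt.$$
By the identification $\phi_\mfl(a_0(a)) = \phi_T(n_*, a_{0,k_0}(a), a_{0,k_*}(a))$ from that same proof (entire in $a$ by the base case), together with the inductive hypothesis applied to $\phi_{\mfl_i}$ and $\phi_{\hat\mfl_i}$ (precomposed with the complex-linear face-sum submersions $\Delta_{\mfl, \C}(T) \to \Delta_{\mfl_i, \C}(T)$ and $\Delta_{\mfl, \C}(T) \to \Delta_{\hat\mfl_i, \C}(T)$), every term on the right-hand side is entire in $a$. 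The displayed formula therefore defines an entire function on $\Delta_{\mfl, \C}(T)$ which agrees with $\phi_\mfl$ on $\Delta_\mfl(T)$, giving the desired analytic extension.

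The main point to verify, rather than a deep obstacle, is that the face-sum submersions used in the proof of Proposition \ref{REG} extend to $\C$-linear surjections $\Delta_{\mfl, \C}(T) \to \Delta_{\mfl_i, \C}(T)$ and $\Delta_{\mfl, \C}(T) \to \Delta_{\hat\mfl_i, \C}(T)$; this is automatic since the original maps are sums of coordinates. The inductively-available entire extensions of $\phi_{\mfl_i}$ and $\phi_{\hat\mfl_i}$ then compose with these submersions to yield entire integrands, which is all that is required.
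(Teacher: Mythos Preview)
Your proof is correct and follows essentially the same route as the paper's: both arguments use the integrated Makeenko--Migdal relation (the paper's equation \eqref{LOQ} at $t=1$, rewritten as \eqref{LOW}) recursively/inductively, together with the observation that the maps $a\mapsto\a$ and $a\mapsto(a_0,a_*)$ are real-linear and hence extend to complex-linear maps, and that $\phi_T(n_*,\cdot,\cdot)$ is visibly entire from the explicit integral formula \eqref{PHITD}. Your version is somewhat more explicit about the base case and about the face-sum submersions, but the content is the same.
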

\begin{proof}
The following formula is the case $t=1$ of \eqref{LOQ}:
\begin{equation}\label{LOW}
\phi_T(n,a_0,a_*)=\phi_\mfl(a)+\sum_{i\in\cI}\a_i\int_0^1\phi_{\mfl_i}(a(s))\phi_{\hat\mfl_i}(a(s))ds
\end{equation}
where the left-hand side is defined by \eqref{PHITD}.
We see from \eqref{PHITD} that $\phi_T(n,.,.)$ has an analytic extension to $\Delta_{\mfs,\C}(T)$.
Also, the real linear maps
$$
a\mapsto\a:\Delta_\mfl(T)\to\R^\cI,\q a\mapsto(a_0,a_*):\Delta_\mfl(T)\to\Delta_\mfs(T)
$$
extend to complex linear maps $\Delta_{\mfl,\C}(T)\to\C^\cI$ and $\Delta_{\mfl,\C}(T)\to\Delta_{\mfs,\C}(T)$.
We can therefore use \eqref{LOW} recursively to construct the desired analytic extension of $\phi_\mfl$.
\end{proof}

\begin{proof}[Proof of Proposition \ref{formula planar loops}]
Since $Q_{\mfl,k}(a,z)$ is 
continuous in $z=(z_1,\dots,z_{n+1})$ on $\cO_\cS$,
analytic in $a$, 
and uniformly bounded on compacts in $\Delta_{\mfl,\C}(T)$,
the right-hand side of \eqref{CFM} is a well-defined multiple contour integral,
does not depend on the order of integration, 
does not depend on the choice of admissible family $(\g_1,\dots,\g_{n+1})$,
and defines an analytic function $\psi_\mfl$ on $\Delta_{\mfl,\C}(T)$.
Set
$$
\d_\mfl(a)=\phi_\mfl(a)-\psi_\mfl(a).
$$
Then $\d_\mfl$ is also analytic on $\Delta_{\mfl,\C}(T)$ by Proposition \ref{ANA}.
We will show by induction on $n$ that $\d_\mfl(a)=0$.

For $n=0$, this follows from Proposition \ref{NFL}.   
Suppose inductively that the statement holds for $n-1$ and let $\mfl$ be a splittable combinatorial planar loop with $n$ intersections. 
Fix $i\in\cI_\mfl$, to be chosen later, and write $k_l$ and $k_r$ for the labels in $\mfl_{i,l}$ and $\mfl_{i,r}$ of the faces containing the face $k$ in $\mfl$.
For $a\in\Delta_\mfl(T)$, write $a_l$ and $a_r$ for the images of $a$ under the natural submersions
$$
\Delta_{\mfl,\C}(T)\to\Delta_{\mfl_{i,l},\C}(T),\q\Delta_{\mfl,\C}(T)\to\Delta_{\mfl_{i,r},\C}(T).
$$
For $(z_1,\dots,z_{n+1})\in\C^{n+1}$, set
$$
z_l=(z_j:\mfs_j\in\cS_{\mfl_{i,l}}),\q z_r=(z_j:\mfs_j\in\cS_{\mfl_{i,r}}).
$$
Then, for $a\in\Delta_{\mfl,\C}(T)$ and $\mfs\in\cS$, we have 
$$
\mu_i\<n_\mfs,a\>=
\begin{cases}
1,&\text{if $\mfs$ uses the right outgoing edge at $i$},\\
-1,&\text{if $\mfs$ uses the left outgoing edge at $i$},\\
0,&\text{otherwise}.
\end{cases}
$$
Hence
\begin{equation}
\label{Wilson Pol}
\mu_iQ_{\mfl,k}(a,z)=Q_{\mfl_{i,l},k_l}(a_l,z_l)Q_{\mfl_{i,r},k_r}(a_r,z_r).
\end{equation}

Since $n\ge1$, the tree $\cS_\mfl$ has at least two leaves, and one of them, say $\mfs_m$, 
is not the boundary of the distinguished face $k$. 
Since the labelling is adapted, there exists $i\le m-1$ such that $\mfs_i$ is adjacent to $\mfs_m$.
Denote by $k_c$ the component of its complement which does not include $k_\infty$.  

The sequence $(\mfs_1,\dots,\mfs_{m-1},\mfs_{m+1},\dots,\mfs_n)$ is an adapted labelling of $\mfl_{i,l}$
and the family of loops $(\g_1,\dots,\g_{m-1},\g_{m+1},\dots,\g_n)$ is admissible for this sequence and for the 
distinguished face $k_l$.
Also, $\mfs_m$ is an adapted labelling of $\mfl_{i,r}$ with admissible loop $\g_m$.
Since the right-hand-side of \eqref{Wilson Pol} is uniformly bounded on any compact subset of $\Delta_{\mfl,\C}(T)\times\cO_n$,  
we deduce that, for all $a\in\Delta_{\mfl,\C}(T)$, 
$$
\mu_i\psi_\mfl(a)=\psi_{\mfl_{i,l}}(a_l)\psi_{\mfl_{i,r}}(a_r).
$$
On the other hand, since $\Phi_T$ satisfies the Makeenko--Migdal equations, for all $a\in\Delta_\mfl(T)$,
$$
\mu_i\phi_\mfl(a)=\phi_{\mfl_{i,l}}(a_l)\phi_{\mfl_{i,r}}(a_r)
$$
and this extends to $a\in\Delta_{\mfl,\C}(T)$ by analyticity.
But $\mfl_{i,l}$ and $\mfl_{i,r}$ are splittable and have no more than $n-1$ points of intersection.
So we have shown that, for all $a\in\Delta_{\mfl,\C}(T)$,
\begin{equation}\label{analytic MM}
\mu_i\d_\mfl(a)=0.
\end{equation}

We check now the boundary condition of this equation. 
Since $\mfl$ is splittable, there is a splittable loop $\tilde\mfl$, with exactly $n-1$ intersections, 
an affine map 
$$
\iota_c:\Delta_\mfl(T)\cap\{a: a_{k_c}=0\}\to\Delta_{\tilde\mfl}(T)
$$ 
and a distinguished face $\tilde k\in\cF_{\tilde\mfl}$ such that,
for any $a\in\Delta_\mfl(T)$ with $a_{k_c}=0$, 
$$
\mfl(a)\cap\tilde\mfl(\iota_c(a))\not=\emptyset
$$
and $\iota_c(a)_{\tilde k}=0$ if and only if $a_k=0$. 
Moreover, for all $a\in\Delta_\mfl(T)$, 
\begin{equation}\label{continuity for analytic extension}
\phi_\mfl(a)=\phi_{\tilde\mfl}(\iota_c(a)).
\end{equation}
Furthermore, by analyticity of $\phi_\mfl$ and $\phi_{\tilde\mfl}$, this equality holds true for all $a\in\Delta_{\mfl,\C}(T)$ with $a_{k_c}=0$.
Let $\nu\in\Z^{\cF_\mfl}$ be the vector with $\nu_{k_c}=1$ which is proportional to $\mu_i$, 
viewed as an element of $\{1_{\cF_\mfl}\}^\bot\cap\R^{\cF_\mfl}$, where  $i$ is the only vertex adjacent to $F_c$.  
Then, by \eqref{analytic MM}, for all $a\in\Delta_{\mfl,\C}(T)$,
$$
\psi_\mfl(a)=\psi_\mfl(a-a_{k_c}\nu).
$$
As $a-a_{k_c}\nu\in\Delta_{\mfl,\C}(T)\cap\{a:a_{k_c}=0\}$, by \eqref{continuity for analytic extension}, in order to conclude, 
it is sufficient to show that, for all $a\in\Delta_{\mfl,\C}(T)$ with $a_{k_c}=0$,
$$
\phi_\mfl(a)=\phi_{\tilde\mfl}(\iota_c(a)).
$$
For such a vector $a$ and for $z\in\cO_n$, set $\tilde z=(z_j:j\not=o)$.
Then
$$
Q_{\mfl,k}(a,z)=Q_{\tilde\mfl,\tilde k}(a,\tilde z)\frac{\ve_{\mfs_o}e^{\ve_{\mfs_o}G_T(z_o)}}{z_o-z_i}.   
$$
For $a\in\Delta_{\mfl,\C}(T)$, the only singularity of $z_o\in\C\sm[-\b,\b]\mapsto Q_{\mfl,k}(a,z)$ is at $z_i$. 
Since the family of loops $(\g_1,\dots,\g_{n+1})$ is admissible, by deforming $\g_o$, 
we can assume that  the bounded component of $\C\sm\g_{\mfs_o}$ contains all $\g_j$ with $j\not=o$.     
Then, for all $\tilde z\in\cO_{n-1}$,
\begin{align*}
\frac1{2\pi i}\int_{\g_{\mfs_o}}Q_{\mfl,k}(a,z)dz_{\mfs_o}
=Q_{\tilde\mfl,\tilde k}(a,\tilde z) 
\frac1{2\pi i}\int_C\frac{e^{\ve_oG_T(z_o)}}{z_o-z_i}dz_o
\end{align*}
with $C$ an anticlockwise circle with centre $0$, whose interior contains all contours $(\g_j:j\not=o)$. 
Since $G_T(z)\sim1/z$ as $z\to\infty$, it follows that 
$$
\frac1{2\pi i}\int_C\frac{e^{\ve_oG_T(z_o)}}{z_o-z_i}dz_o
=-\frac1{2\pi i}\int_{1/C}\frac{e^{\ve_oG_T(1/y)}}{y(1-yz_i)}dy=1.  
$$
Therefore, performing the integration in $\phi_\mfl(a)$ first with respect to $z_m$, we obtain, 
when $a\in\Delta_{\mfl,\C}(T)$, with $a_{k_c}=0$,
\begin{align*}
\phi_\mfl(a)&=\left(\frac1{2\pi i}\right)^n\int_{z_j\in\g_j,\text{ for }j\not=o}
Q_{\tilde\mfl,\tilde k}(a,(z_\mfs)_{\mfs\in\cS_{\mfl}\sm\{\mfs_o\}})\prod_{j\not=o}dz_j\\
&=\phi_{\tilde\mfl}(\iota_c(a)).
\end{align*}
\end{proof}

\noindent\textbf{Acknowledgments.} 
The authors wish to thank Guillaume C\'ebron, Franck Gabriel, Thierry L\'evy, and Myl\`ene Ma\"ida for several motivating and fruitful discussions about this project.  They are grateful to Franck Gabriel for his comments  on a first version of this work.

\def\cprime{$'$}

\end{document}